\newcommand\hidden[1]{} % this macro doesn't do anything with its argument
\def\@tocline#1#2#3#4#5#6#7{\relax
  \ifnum #1>\c@tocdepth % then omit
  \else
    \par \addpenalty\@secpenalty\addvspace{#2}%
    \begingroup \hyphenpenalty\@M
    \@ifempty{#4}{%
      \@tempdima\csname r@tocindent\number#1\endcsname\relax
    }{%
      \@tempdima#4\relax
    }%
    \parindent\z@ \leftskip#3\relax \advance\leftskip\@tempdima\relax
    \rightskip\@pnumwidth plus4em \parfillskip-\@pnumwidth
    #5\leavevmode\hskip-\@tempdima
      \ifcase #1
       \or\or \hskip 1em \or \hskip 2em \else \hskip 3em \fi%
      #6\nobreak\relax
    \hfill\hbox to\@pnumwidth{\@tocpagenum{#7}}\par% <---- \dotfill -> \hfill
    \nobreak
    \endgroup
  \fi}
\newsavebox{\pullback}
\sbox\pullback{%
\begin{tikzpicture}%
\draw (0,0) -- (1ex,0ex);%
\draw (1ex,0ex) -- (1ex,1ex);%
\end{tikzpicture}}
\newsavebox{\pullbackdl}
\sbox\pullbackdl{%
\begin{tikzpicture}%
\draw (-1ex,0ex) -- (0ex,0ex);%
\draw (0ex,-1ex) -- (0ex,0ex);%
\end{tikzpicture}}
\newsavebox{\pushoutdr}
\sbox\pushoutdr{%
\begin{tikzpicture}%
\draw (-1ex,-1ex) -- (-1ex,0ex);%
\draw (-1ex,0ex) -- (0ex,0ex);%
\end{tikzpicture}}
\newcommand{\stacksproj}[1]{{\cite[Tag~{#1}]{stacks-project}}}
\newcommand{\mustata}{Musta{\c{t}}{\u{a}}}
\newcommand{\cExt}{\mathcal{E}xt}
\newcommand{\bC}{\mathbb{C}}
\newcommand{\bD}{\mathbb{D}}
\newcommand{\bH}{\mathbb{H}}
\newcommand{\bP}{\mathbb{P}}
\newcommand{\bQ}{\mathbb{Q}}
\newcommand{\bZ}{\mathbb{Z}}
\newcommand{\cC}{\mathcal{C}}
\newcommand{\cD}{\mathcal{D}}
\newcommand{\cF}{\mathcal{F}}
\newcommand{\cG}{\mathcal{G}}
\newcommand{\cH}{\mathcal{H}}
\newcommand{\cL}{\mathcal{L}}
\newcommand{\cM}{\mathcal{M}}
\newcommand{\cO}{\mathcal{O}}
\newcommand{\m}{\mathfrak{m}}
\newcommand{\p}{\mathfrak{p}}
\DeclareMathOperator{\Gr}{Gr}
\DeclareMathOperator{\DR}{DR}
\DeclareMathOperator{\GrDR}{Gr\,DR}
\DeclareMathOperator{\DDB}{\underline{\Omega}}
\DeclareMathOperator{\DO}{D\underline{\Omega}}
\DeclareMathOperator{\IO}{I\hspace{0.07em}\underline{\Omega}}
\DeclareMathOperator{\IC}{IC}
\DeclareMathOperator{\Supp}{Supp}
\DeclareMathOperator{\Spec}{Spec}
\DeclareMathOperator{\codim}{codim}
\DeclareMathOperator{\Hom}{Hom}
\theoremstyle{plain}
\newtheorem{theorem}{Theorem}[section]
\newtheorem{proposition}[theorem]{Proposition}
\newtheorem{lemma}[theorem]{Lemma}
\newtheorem{corollary}[theorem]{Corollary}
\newtheorem{claim}[theorem]{Claim}
\newtheorem*{claim*}{Claim}
\newtheorem{theoremIntro}{Theorem}
\newtheorem{lemmaIntro}[theoremIntro]{Lemma}
\newtheorem{propositionIntro}[theoremIntro]{Proposition}
\newtheorem{corollaryIntro}[theoremIntro]{Corollary}
\theoremstyle{definition}
\newtheorem{definition}[theorem]{Definition}
\newtheorem{setting}[theorem]{Setting}
\newtheorem*{setup*}{Setup}
\theoremstyle{remark}
\newtheorem{remark}[theorem]{Remark}
\numberwithin{equation}{theorem}
\newif\ifshowColoursAndTodoes
\def\todo#1{\textcolor{Mahogany}%
{\footnotesize\newline{\color{Mahogany}\fbox{\parbox{\textwidth-15pt}{\textbf{todo: } #1}}}\newline}}
\def\commentbox#1{\textcolor{Mahogany}%
{\footnotesize\newline{\color{Mahogany}\fbox{\parbox{\textwidth-15pt}{\textbf{comment: } #1}}}\newline}}
\def\todo#1{}
\def\commentbox#1{}
\renewcommand{\st}[1]{}
\colorlet{red}{black!100} 
\colorlet{teal}{black!100} 
\colorlet{brown}{black!100} 
\colorlet{blue}{black!100} 
\colorlet{magenta}{black!100} 
\colorlet{purple}{black!100} 
\colorlet{cyan}{black!100} 
\newcommand{\kdot}{{{\,\begin{picture}(1,1)(-1,-2)\circle*{2}\end{picture}\,}}}
\title[Inversion of adjunction]{Inversion of adjunction for higher rational singularities}
\author{Tatsuro Kawakami}
\address{Graduate School of Mathematical Sciences, University of Tokyo, 3-8-1 Komaba,
Meguro-ku, Tokyo 153-8914, Japan} 
\email{tatsurokawakami0@gmail.com}
\author{Jakub Witaszek} 
\address{Northwestern University, Department of Mathematics, Lunt Hall, 2033 Sheridan Road, Evanston, IL 60208, USA}
\email{jakub.witaszek@northwestern.edu}
\begin{document}

\begin{abstract}
We prove inversion of adjunction for higher rational singularities. 
\end{abstract}

\subjclass[2020]{14B05, 14F10, 32S35}   
\keywords{Higher rational singularities, Higher Du Bois singularities}
\maketitle

\setcounter{tocdepth}{2}
\tableofcontents

\section{Introduction}

Recent years have seen a surge of interest in the study of singularities coming from Hodge theory, especially in the context of Saito's theory of Hodge modules. In this article, we focus on two such classes of singularities: higher Du Bois and higher rational.

These classes of singularities can be defined in terms of the filtered complex $\DDB^\kdot_X$, called the \emph{Deligne--Du Bois complex}, which features prominently in the theory of mixed Hodge structures for singular varieties. The complexes $\DDB^i_X \in D^b_{\rm coh}(X)$ agree with $\Omega^i_X$ when $X$ is smooth. In general, following \cite{SVV} we say that
\begin{align*} \label{intro:def-hypersurface}
\text{$X$ is \emph{$m$-Du Bois} if }& \DDB^i_X \cong \Omega^{[i]}_X \text{ for } 0 \leq i \leq m \text{ and } {\rm codim}({\rm Sing}(X)) \geq 2m+1,\\
    \text{$X$ is \emph{$m$-rational} if }& Rf_*\Omega^i_Y(\log E) \cong \Omega^{[i]}_X \text{ for } 0 \leq i \leq m \text{ and } {\rm codim}({\rm Sing}(X)) > 2m+1. \nonumber
\end{align*}
Here $f \colon Y \to X$ is a \emph{strong resolution}, that is, a log resolution of singularities which is an isomorphism over the smooth locus. Also $E$ is the exceptional locus of $f$ and $\Omega^{[i]}_X \coloneqq (\Omega^i_X)^{**}$. 
One can check that $m$-Du Bois and $m$-rational singularities agree with the standard Du Bois and rational singularities, respectively, when $m=0$.

\begin{remark} $m$-Du Bois singularities are strongly connected to $m$-log canonical singularities introduced by {\mustata} and Popa (\cite{MP19}). Specifically, in \cite{MOPW23} and \cite{JKSY22}, it was proven that $m$-log canonical singularities are equivalent to $m$-Du Bois singularities for hypersurfaces (see \cites{Mustata-Popa22,CDM23k-rational-complete-intersection} for the case of complete intersections). 

The class of $m$-rational singularities was introduced by Friedman and Laza (\cite{Friedman-Laza2,Friedman-Laza1}), motivated, in part, by their study of unobstructedness of deformation theory of singular complex Calabi-Yau varieties (see also the work of Kerr and Laza \cite{KL24}). Both $m$-Du Bois and $m$-rational singularities have been extensively studied in the recent years, for example in the work of Shen--Venkatesh--Vo (\cite{SVV}), Popa--Shen--Vo (\cite{popa2024injectivityvanishingdubois}), Park--Popa (\cite{PP24}), Kawakami--Witaszek (\cite{KW24}),  {\mustata}--Popa (\cite{mustata2024krationalkduboislocal}), Chen--Dirks--{\mustata}  (\cite{CDM23k-rational-complete-intersection}), and many others. 
\end{remark}

It is a general expectation in algebraic geometry that if a Cartier divisor $D$ has mild singularities, then the ambient space $X$ should also have mild singularities along $D$. This phenomenon is known as \emph{the inversion of adjunction} and has been proven for many classes of singularities such as rational (\cite{elkik78}) or Du Bois (\cite{KovacsSchwede16}). Establishing inversion of adjunction for higher singularities has remained an important unresolved problem in the field. The main result of this article is a proof of this property for $m$-rational singularities.

\begin{theoremIntro}[{Theorem \ref{thm:inversion-of-adjunction}}] \label{intro:main-thm}
Let $X$ be a normal connected $d$-dimensional variety   defined over $\bC$ and let $D$ be a normal Cartier prime divisor. Assume that $D$ is $m$-rational. Then $X$ is $m$-rational along $D$.
\end{theoremIntro}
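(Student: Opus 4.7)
The plan is to verify the two clauses of $m$-rationality in a neighborhood of $D$: the codimension bound $\codim_X \operatorname{Sing}(X) > 2m + 1$ along $D$, and the isomorphism $Rf_*\Omega^i_Y(\log E) \cong \Omega^{[i]}_X$ for $0 \leq i \leq m$, where $f \colon Y \to X$ is a strong resolution with exceptional divisor $E$.

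The codimension bound is elementary: since $D$ is Cartier, any $x \in \operatorname{Sing}(X) \cap D$ automatically lies in $\operatorname{Sing}(D)$ (the embedding dimension of $X$ at $x$ exceeds $d$, so that of $D$, which is cut out by a single equation, exceeds $d - 1$). Hence $\operatorname{Sing}(X) \cap D \subseteq \operatorname{Sing}(D)$, and using that $D$ is Cartier of codimension one, the bound $\codim_D \operatorname{Sing}(D) > 2m + 1$ from the $m$-rationality of $D$ transfers to $\codim_X Z > 2m + 1$ for every component $Z$ of $\operatorname{Sing}(X)$ meeting $D$.

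For the main pushforward isomorphism, I would refine $f$ to be simultaneously a log resolution of the pair $(X, D)$, arranging that the strict transform $\tilde D$ together with $E$ forms an SNC divisor. Writing $g \coloneq f|_{\tilde D} \colon \tilde D \to D$ and $E_D \coloneq (E \cap \tilde D)_{\mathrm{red}}$, the map $g$ is a strong resolution of $D$ with exceptional divisor $E_D$. The key tool is the logarithmic residue short exact sequence
\begin{equation*}
0 \to \Omega^i_Y(\log E) \to \Omega^i_Y(\log(E + \tilde D)) \to \Omega^{i-1}_{\tilde D}(\log E_D) \to 0,
\end{equation*}
whose derived pushforward along $f$ produces a distinguished triangle; the $m$-rationality of $D$ identifies the rightmost term with $\Omega^{[i-1]}_D$ for $1 \leq i \leq m$, and the case $i = 0$ reduces to Elkik's classical inversion of adjunction for rational singularities. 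What remains is to identify the middle term $Rf_*\Omega^i_Y(\log(E + \tilde D))$ with a reflexive log differential $\Omega^{[i]}_X(\log D) \coloneq (\Omega^i_X(\log D))^{**}$, together with a compatible reflexive residue sequence $0 \to \Omega^{[i]}_X \to \Omega^{[i]}_X(\log D) \to \Omega^{[i-1]}_D \to 0$ on $X$; the desired isomorphism then follows by comparing the two triangles.

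This last identification, which amounts to an ``$m$-rationality'' type statement for the logarithmic pair $(X, D)$, is the main obstacle. I expect to establish it by combining a Nakayama-type descent along the normal bundle of $D$ with Grauert--Riemenschneider-style vanishing for higher rational pairs, drawing on Hodge-theoretic input (for instance from \cite{KW24} or Saito's theory of mixed Hodge modules) together with careful tracking of local cohomology supported along $D$.
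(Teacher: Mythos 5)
Your reduction has a genuine gap at exactly the point you yourself flag as ``the main obstacle'', and the tools you propose there do not close it. The deferred step --- identifying $Rf_*\Omega^i_Y(\log(E+\widetilde D))$ with the sheaf $\Omega^{[i]}_X(\log D)$ concentrated in degree $0$, together with an exact reflexive residue sequence $0 \to \Omega^{[i]}_X \to \Omega^{[i]}_X(\log D) \to \Omega^{[i-1]}_D \to 0$ --- is not a technical remainder: given your residue triangle and the $m$-rationality of $D$, it is essentially equivalent to the theorem itself (it follows from the theorem via Lemma \ref{lem:extension-for-log-pairs}, and conversely it implies the theorem by precisely the comparison you describe). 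So appealing to ``Grauert--Riemenschneider-style vanishing for higher rational pairs'' is circular --- no such pair-level vanishing is available, ``Nakayama-type descent along the normal bundle'' is not a mechanism, and the paper pointedly never proves any vanishing of $R^{>0}f_*\Omega^i_Y(\log(G+E))$. Even the purely sheaf-level part of your claim, the surjectivity of $\Omega^{[m]}_X(\log D)\to\Omega^{[m-1]}_D$ and of $\Omega^{[m]}_X\to\Omega^{[m]}_D$, is genuinely delicate: in the paper this is Lemma \ref{lem:advance-sess-reflexified}, whose proof needs the finiteness of $\Supp\,\cExt^{-2}(\Omega^{[m]}_X,\omega^\kdot_X)$ (Lemma \ref{lem:small-introduction}, obtained by cutting with a general hyperplane and using that $X$ is rational and pre-$(m-1)$-Du Bois) together with the depth bounds $H^2_\p(\Omega^{[i]}_D)=0$ supplied by pre-$m$-rationality of $D$, and even then the authors remark that they know no proof other than an argument of the same type as the main theorem's.

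What is missing conceptually is the paper's actual engine: a purely local-cohomological characterisation of pre-$m$-rationality (Corollary \ref{cor:lc-def-ofkRational}(6)), namely that $X$ is pre-$m$-rational if and only if $H^j_\m(\Omega^i_{X,h}) \to H^j_\m(Rf_*\Omega^i_Y(\log E))$ is injective for all maximal ideals $\m$ and all $i+j\le d$, $i\le m$, valid for an arbitrary log resolution. This criterion rests on substantial Hodge-theoretic input (Park's injectivity theorem \cite[Theorem 10.5]{popa2024injectivityvanishingdubois}, the simplicity of $\IC_X$, and the comparison of $\DDB^i_X$, $\IO^i_X$, $\DO^i_X$ via \cite{PP24} and \cite{DOR25}). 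With it, inversion of adjunction becomes a diagram chase: take a class $\alpha\in H^j_\m(\Omega^{[i]}_X)$ killed by the local equation $t$ of $D$, push it through the two residue diagrams of Corollary \ref{cor:advanceses}, and use the injectivity criterion for $D$ together with the Steenbrink-type vanishing of Proposition \ref{prop:vanishing} to see that $f^*\alpha\neq 0$; at no point is a degree-$0$ concentration of $Rf_*\Omega^i_Y(\log(G+E))$ needed. Your codimension transfer and the reduction of $i=0$ to Elkik are correct, but without either the injectivity criterion or an independent proof of your pair-level identification, the comparison-of-triangles step does not go through.
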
 \label{thm:main-intro}
\noindent In fact, our theorem is a bit stronger (see Definition \ref{defintro:-predef} and Theorem \ref{thm:inversion-of-adjunction}). 
Also note that this result was previously known for complete intersections via a comparison with $m$-log canonical singularities and the minimal exponent (see the work of Chen \cite{chen24} for the strongest results in this direction).

\begin{corollaryIntro} \label{corintro:fibres}
Let $X$ be a normal connected $d$-dimensional variety defined over $\bC$. Let $g \colon X \to C$ be a proper morphism over $\bC$ with connected fibres to a smooth connected curve $C$. Fix an integer $m\geq 0$, pick a closed point $s \in C$, and set $X_s$ to be the fibre of $g$ over $s$.  Assume that $X_s$ is normal and has $m$-rational singularities. Then all but a finite number of fibres of $g$ have $m$-rational singularities.
\end{corollaryIntro}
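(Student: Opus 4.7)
The plan is to reduce Corollary \ref{corintro:fibres} to Theorem \ref{intro:main-thm} via properness of $g$, and then propagate $m$-rationality from the distinguished fibre $X_s$ to the general fibre by a standard base-change and generic-smoothness argument.

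First I would observe that, since $s$ is a smooth point of the curve $C$, the scheme-theoretic fibre $X_s$ is a Cartier divisor on $X$, and, being normal and connected by assumption, is prime. Applying Theorem \ref{intro:main-thm} with $D = X_s$ gives that $X$ is $m$-rational on some Zariski open neighborhood $U \supseteq X_s$. Properness of $g$ then ensures that $g(X \setminus U)$ is Zariski closed in $C$ and does not contain $s$, hence is a finite set $\Sigma \subset C$; for every $s' \in C \setminus \Sigma$ we have $X_{s'} \subseteq U$, so $X$ is $m$-rational in a neighborhood of $X_{s'}$.

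Next, after possibly enlarging $\Sigma$ by a finite set, I would use generic smoothness in characteristic zero to guarantee that each remaining fibre $X_{s'}$ is itself $m$-rational. Fix a strong log resolution $\pi \colon Y \to U$ with exceptional divisor $E$. By generic smoothness applied to $g \circ \pi \colon Y \to C$ and to the restriction of $g \circ \pi$ to each stratum of $E$, for all but finitely many $s' \in C$ the fibre $Y_{s'}$ is smooth, meets $E$ transversally, and $E|_{Y_{s'}}$ is simple normal crossings; in particular $\pi|_{Y_{s'}} \colon Y_{s'} \to X_{s'}$ is a strong log resolution of $X_{s'}$. A parallel dimension count using $\mathrm{codim}_U(\mathrm{Sing}(X)) > 2m+1$ shows that $\mathrm{codim}_{X_{s'}}(\mathrm{Sing}(X_{s'})) > 2m+1$ for general $s'$; combined with the Cohen--Macaulayness of $X_{s'}$ inherited from $U$ (since $m$-rational implies rational implies Cohen--Macaulay), Serre's criterion yields normality of $X_{s'}$.

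It then remains to verify the isomorphism $R(\pi|_{Y_{s'}})_* \Omega^i_{Y_{s'}}(\log E|_{Y_{s'}}) \cong \Omega^{[i]}_{X_{s'}}$ for $0 \leq i \leq m$. I would obtain this by restricting the defining isomorphism $R\pi_* \Omega^i_Y(\log E) \cong \Omega^{[i]}_X$ on $U$ along the regular closed embedding $X_{s'} \hookrightarrow U$, using (i) derived base change, valid because $X_{s'}$ is Cartier in $U$ and the relevant sheaves are flat over $C$ near a general $s'$ by generic flatness, (ii) the formula $\Omega^i_Y(\log E)|_{Y_{s'}} \cong \Omega^i_{Y_{s'}}(\log E|_{Y_{s'}})$, valid under the transversality established above, and (iii) the compatibility $\Omega^{[i]}_X|_{X_{s'}} \cong \Omega^{[i]}_{X_{s'}}$. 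I expect step (iii) to be the main technical obstacle: one must ensure that reflexive hulls of K\"ahler differentials commute with restriction to a general fibre, which should follow from generic flatness together with the codimension estimate above controlling the locus where $\Omega^i_X$ fails to be reflexive, but it requires care to delineate the finite set of bad fibres that must be excluded.
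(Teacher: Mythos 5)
The first half of your argument (apply Theorem \ref{intro:main-thm} to $D=X_s$ to get an $m$-rational open set $U\supseteq X_s$, then use properness of $g$ to arrange $U=g^{-1}(V)$) is exactly how the paper begins. The gap is in your propagation to the general fibre. Your step (ii) is false: for $1\leq i\leq d$ the sheaf $\Omega^i_Y(\log E)\otimes\cO_{Y_{s'}}$ has rank $\binom{d}{i}$ while $\Omega^i_{Y_{s'}}(\log E|_{Y_{s'}})$ has rank $\binom{d-1}{i}$, so they cannot be isomorphic; the correct statement (Lemma \ref{lem:basic-ses}, together with the triviality of the conormal bundle of a fibre) is a short exact sequence
\[
0 \to \Omega^{i-1}_{Y_{s'}}(\log E|_{Y_{s'}}) \to \Omega^i_Y(\log E)\otimes\cO_{Y_{s'}} \to \Omega^i_{Y_{s'}}(\log E|_{Y_{s'}}) \to 0.
\]
The same rank count shows that your step (iii), $\Omega^{[i]}_X|_{X_{s'}}\cong\Omega^{[i]}_{X_{s'}}$, is also false for $i\geq 1$ (no amount of generic flatness can repair a rank mismatch); again the truth is a two-term extension, and on the level of the Hodge-theoretic objects this is precisely the content of the exact triangles in Corollary \ref{cor:restrictionDDB-and-IO}. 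Consequently one cannot simply ``restrict the isomorphism $Rf_*\Omega^i_Y(\log E)\cong\Omega^{[i]}_X$'' to a general fibre; one has to run an induction on $i$ (equivalently on $m$) through these extensions, peeling off the $\Omega^{i-1}$-piece at each stage.

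That induction is exactly what Lemma \ref{cor:pre-k-rational-restriction}(1) (i.e.\ \cite[Theorem A]{SVV}) provides, and the paper's proof of the corollary consists of nothing more than your first two steps followed by a citation of that lemma applied to the general fibre over $V$. So to complete your argument you would either invoke Lemma \ref{cor:pre-k-rational-restriction}(1) at the point where you currently restrict the isomorphism, or reprove it by the inductive triangle argument; as written, steps (ii) and (iii) are not correctable technical obstacles but incorrect statements, and the proof does not go through without replacing them. Your auxiliary observations (normality of general fibres via Serre's criterion, generic transversality of the resolution) are fine but are subsumed in the cited restriction lemma.
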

\begin{remark}
One can also ask whether inversion of adjunction holds for $m$-Du Bois singularities. We plan to address this question in a future article. 
\end{remark}

\subsection{Equivalent definitions of higher singularities} \label{ss:definition-k-rational}

In order to prove Theorem \ref{intro:main-thm}, we need to reformulate the definition of $m$-Du Bois and $m$-rational singularities purely in terms of local cohomology. We start by reviewing different variants of higher singularities. Let $X$ be a normal connected $d$-dimensional variety defined over $\bC$.

\begin{definition} \label{defintro:-predef}
Following \cite{SVV}, we say that
    \begin{itemize}
    \item $X$ is \emph{pre-$m$-Du Bois} if $\cH^j(\DDB^i_X) =0$ for all integers $0 \leq i \leq m$ and $j >0$,
    \item $X$ is \emph{pre-$m$-rational} if $\cH^j(\DO^{i}_X)=0$ for all integers $0 \leq i \leq m$ and $j>0$.
\end{itemize}
\end{definition}
\noindent Here $\DO^i_X \coloneqq R\Hom(\DDB^{d-i}_X, \omega^\kdot_X[-d])$. If we assume that ${\rm codim}({\rm Sing}(X)) > m$, then
\[
\DO^i_X = Rf_*\Omega^i_Y(\log E)
\]
for any strong resolution $f \colon Y \to X$ (Proposition \ref{prop:dualDDB-explicit}). Set $\Omega^i_{X,h} \coloneqq \cH^0(\DDB^i_X)$.

\begin{propositionIntro}[{Proposition \ref{prop:lc-def-ofkDuBois}}]\label{propintro:DDB-def} Let $X$ be a normal connected $d$-dimensional variety defined over $\bC$. Then 
$X$ is pre-$m$-Du Bois if and only if the following natural map is injective
\[
H^j_\m(\Omega^i_{X,h}) \to H^j_\m(\DDB^i_X)
\]
for all maximal ideals $\m$ and integers $i,j \geq 0$ such that $i+j \leq d$ and $i \leq m$.
\end{propositionIntro}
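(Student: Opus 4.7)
The forward direction is immediate: if $X$ is pre-$m$-Du Bois, then the natural morphism $\Omega^i_{X,h} \to \DDB^i_X$ is a quasi-isomorphism for every $0 \leq i \leq m$, so the induced map on local cohomology is an isomorphism, in particular injective.

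For the converse, I would set $\cG_i \coloneq \tau^{\geq 1}\DDB^i_X \in D^{\geq 1}_{\mathrm{coh}}(X)$, sitting in the exact triangle
\[
\Omega^i_{X,h} \to \DDB^i_X \to \cG_i \xrightarrow{+1}.
\]
The complex $\cG_i$ is supported on ${\rm Sing}(X)$, which has codimension at least two by normality, and the goal is to show $\cG_i = 0$ for every $0 \leq i \leq m$. Passing to local cohomology at a maximal ideal $\m$, the injectivity hypothesis is equivalent, via the associated long exact sequence, to the vanishing of every connecting map $H^{j-1}_\m(\cG_i) \to H^j_\m(\Omega^i_{X,h})$ for $0 \leq j \leq d-i$. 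I would then dualise via $R\Hom(-,\omega^\kdot_X)$; using the identification $R\Hom(\DDB^i_X,\omega^\kdot_X) = \DO^{d-i}_X[d]$, the triangle above becomes
\[
R\Hom(\cG_i,\omega^\kdot_X) \to \DO^{d-i}_X[d] \to R\Hom(\Omega^i_{X,h},\omega^\kdot_X) \xrightarrow{+1}.
\]
Grothendieck local duality converts the injectivity hypothesis into a surjectivity of cohomology sheaves $\cH^k(\DO^{d-i}_X) \twoheadrightarrow \cExt^{k-d}(\Omega^i_{X,h},\omega^\kdot_X)$ for $k \geq i$; by the long exact sequence of the dual triangle, this is in turn equivalent to an injection $\cExt^l(\cG_i,\omega^\kdot_X) \hookrightarrow \cH^{l+d}(\DO^{d-i}_X)$ for every $l \geq i-d+1$.

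To conclude $R\Hom(\cG_i,\omega^\kdot_X)=0$, and therefore $\cG_i = 0$, I would couple the injections above with vanishing information for the cohomology sheaves of $\DO^{d-i}_X$ of Steenbrink type, forcing $\cH^k(\DO^{d-i}_X)=0$ for $k$ above a suitable threshold and thereby killing $\cExt^l(\cG_i,\omega^\kdot_X)$ in the range where the injection applies; the remaining "bottom" degrees, outside the range of the injection, are controlled by the codimension at least two support of $\cG_i$, which restricts $R\Hom(\cG_i,\omega^\kdot_X)$ to a bounded range of cohomological degrees. The main technical difficulty will be matching these two ranges: the injection controls the upper degrees, the support bound controls the lower ones, and showing that together they cover every cohomological degree will likely require an induction on $i$ that exploits the vanishing already established for smaller indices, together with a careful use of the specific structure of $\Omega^i_{X,h}$ and $\DO^{d-i}_X$.
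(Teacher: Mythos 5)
Your reduction to showing $\cG_i=0$ and the dualisation bookkeeping are fine as far as they go, but the concluding step contains a genuine gap that formal duality cannot bridge. The vanishing you need, namely $\cH^{k}(\DO^{d-i}_X)=\cExt^{k-d}(\DDB^i_X,\omega^\kdot_X)=0$ for $k$ above a threshold (so as to kill $\cExt^l(\cG_i,\omega^\kdot_X)$ in the range where your injection applies), is \emph{not} a Steenbrink-type fact. The only unconditional statements of that flavour are $\DDB^i_X\in D^{[0,d-i]}_{\rm coh}(X)$ (Lemma \ref{lem:DDB-IO-coh-amplitude}) and its Grothendieck dual, which is a \emph{lower} bound $\DO^{d-i}_X\in{}^pD^{\geq i}$, i.e.\ vanishing of \emph{low} local cohomology (Lemma \ref{lem:loccohdual-and-int}); it says nothing about the higher cohomology sheaves of $\DO^{d-i}_X$. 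Vanishing of $\cH^{>0}(\DO^{j}_X)$ is precisely the definition of pre-rationality (already for $j=0$ it is equivalent to rational singularities), so your argument would be smuggling in a nontrivial singularity hypothesis rather than deriving one. Moreover the two ranges you hope to match genuinely fail to cover everything: the hypothesis only controls $H^j_\m$ for $j\leq d-i$, hence $\cExt^l(\cG_i,\omega^\kdot_X)$ only for $l\geq i-d+1$, while the support bound $\dim\Supp\cG_i\leq d-2$ together with $\cG_i\in D^{[1,d-i]}$ only forces vanishing for $l< i-2d+2$; for $d\geq 2$ the middle window $i-2d+2\leq l\leq i-d$ is controlled by neither, and there is no formal reason for $\cExt^l(\cG_i,\omega^\kdot_X)$ to vanish there.

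The paper's proof of Proposition \ref{prop:lc-def-ofkDuBois} needs substantially more than the hypothesis plus duality: it runs an induction on $m$ and on $\dim X$, restricts to a general hyperplane section (Corollary \ref{cor:restrictionDDB-and-IO} together with Lemma \ref{lem:lc-restrict-to-divisor} to transport the injectivity hypothesis to $H$) to conclude that $\Omega^m_{X,h}\to\DDB^m_X$ is an isomorphism outside a finite set of points; only then can it invoke the Popa--Shen--Vo surjectivity theorem (Theorem \ref{thm:surj-lc}), a Hodge-theoretic input obtained by comparing $\DDB^\kdot_X$ with singular cohomology, to upgrade your injections to isomorphisms for $i+j\leq d$; finally the degree gap is closed not by ordinary Steenbrink but by the strengthened vanishing $\cH^{d-m}(\DDB^m_X)=0$ under the pre-$(m-1)$-Du Bois hypothesis (Proposition \ref{prop:stronger-Steenbrink}). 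None of these ingredients is recoverable from the purely formal triangle-and-duality manipulation you propose, and the ``range-matching'' difficulty you flag at the end is exactly where the approach breaks down.
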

\noindent The above proposition is related to the recent very deep theorem of Kovács \cite{kovács2025complexesdifferentialformssingularities}. A variant of the above result was already 
discussed in the first version of \cite{KW24}.

\begin{propositionIntro}[{Corollary \ref{cor:lc-def-ofkRational}}] \label{propintro:IO-def} Let $X$ be a normal connected $d$-dimensional variety defined over $\bC$ and let $f \colon Y \to X$ be a log resolution of $X$ with exceptional divisor $E$. Then $X$ is pre-$m$-rational if and only if the following natural map is injective:
\[
H^j_\m(\Omega^i_{X,h}) \to H^j_\m(Rf_*\Omega^i_Y(\log E))
\]
for all maximal ideals $\m$, and for all integers $i,j \geq 0$ such that $i+j \leq d$ and $i \leq m$.
\end{propositionIntro}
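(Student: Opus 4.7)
I will mirror the strategy used for Proposition~\ref{propintro:DDB-def}, replacing $\DDB^i_X$ with $\cF^\bullet_i \coloneqq Rf_*\Omega^i_Y(\log E)$, and use the factorization
\[
\Omega^i_{X,h} \;\xrightarrow{\ \alpha_i\ }\; \DDB^i_X \;\longrightarrow\; Rf_*\Omega^i_Y \;\longrightarrow\; \cF^\bullet_i
\]
to lift what is already known in the Du Bois case. The composite $\Omega^i_{X,h}\to\cF^\bullet_i$ factors through $\cH^0(\cF^\bullet_i)=f_*\Omega^i_Y(\log E)$ and is an isomorphism over the smooth locus, so its mapping cone $C^\bullet_i$ has cohomology sheaves supported in $\mathrm{Sing}(X)$.

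\emph{Easy direction} (pre-$m$-rational $\Rightarrow$ injectivity). The factorization above and Proposition~\ref{propintro:DDB-def} show that pre-$m$-rational implies pre-$m$-Du Bois; in particular one can deduce $\mathrm{codim}\,\mathrm{Sing}(X)>m$ (otherwise the non-vanishing of $\cH^{>0}$ on the singular locus would propagate, using Grothendieck non-vanishing and local duality applied to the defining property $\DO^i_X=R\cHom(\DDB^{d-i}_X,\omega^\kdot_X[-d])$). Then Proposition~\ref{prop:dualDDB-explicit} gives $\DO^i_X\simeq \cF^\bullet_i$ for $i\le m$, so $\cF^\bullet_i$ is concentrated in degree $0$. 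Since $\cH^0(\cF^\bullet_i)$ and $\Omega^i_{X,h}$ are both $S_2$ extensions of $\Omega^i_X$ across a locus of codimension $>m$, the map $\alpha_i$ is a quasi-isomorphism, and the local cohomology map is an isomorphism.

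\emph{Hard direction} (injectivity $\Rightarrow$ pre-$m$-rational). The factorization immediately implies injectivity of $H^j_\m(\Omega^i_{X,h})\to H^j_\m(\DDB^i_X)$ in the same range, so by Proposition~\ref{propintro:DDB-def} the variety $X$ is pre-$m$-Du~Bois; in particular $\DDB^i_X\simeq \Omega^i_{X,h}$ for $i\le m$ (after noting $\mathrm{codim}\,\mathrm{Sing}(X)>m$, which also drops out of the injectivity hypothesis by taking $j=d-i$ and using non-vanishing of top local cohomology for modules of dimension $\dim\mathrm{Sing}(X)$). The long exact sequence
\[
\cdots \to H^{j-1}_\m(C^\bullet_i) \to H^j_\m(\Omega^i_{X,h}) \to H^j_\m(\cF^\bullet_i) \to H^j_\m(C^\bullet_i) \to \cdots
\]
and the injectivity hypothesis then imply that the connecting map $H^{j-1}_\m(C^\bullet_i)\to H^j_\m(\Omega^i_{X,h})$ vanishes for $i\le m$ and $i+j\le d$. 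I would now deduce $\cH^j(\cF^\bullet_i)=0$ for $j>0$ via the local cohomology spectral sequence
\[
E_2^{p,q}=H^p_\m(\cH^q(\cF^\bullet_i))\;\Rightarrow\;H^{p+q}_\m(\cF^\bullet_i),
\]
combined with Grothendieck non-vanishing: if some $\cH^q(\cF^\bullet_i)$ with $q>0$ were nonzero near a closed point of $\mathrm{Sing}(X)$, then $H^{\dim\,\mathrm{supp}}_\m(\cH^q(\cF^\bullet_i))\neq 0$, and one would trace this through the spectral sequence to contradict the vanishing of the connecting map obtained from injectivity. The argument proceeds by descending induction on $q$ (starting from the top $q$ allowed by the support bound), using at each stage that the lower differentials on $E_r^{p,q}$ land in $E_r^{p+r,q-r+1}$ with $q-r+1\ge 0$, so they are controlled by the inductive hypothesis.

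\emph{Main obstacle.} The bottleneck is this last inductive extraction of $\cH^j(\cF^\bullet_i)=0$ from the local-cohomology injectivity: one has to bookkeep simultaneously the index $i$, the cohomological degree $j$, and the codimension of support of $\cH^q(\cF^\bullet_i)$, and then match them against the constraint $i+j\le d$. Getting the induction to close — so that the spectral sequence differentials cannot conspire to cancel a potential nonzero $\cH^q$ — is the main point where one must argue carefully, in close parallel to the corresponding step in the proof of Proposition~\ref{propintro:DDB-def}.
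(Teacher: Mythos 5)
Your strategy has a genuine gap, and it is located exactly where you flag your ``main obstacle.'' The proposal treats $Rf_*\Omega^i_Y(\log E)$ for an \emph{arbitrary} log resolution as if it computed $\DO^i_X$, i.e.\ as if pre-$m$-rationality were equivalent to $\cH^{>0}(Rf_*\Omega^i_Y(\log E))=0$ for $i\le m$. That identification is only available when $i<\codim_X f(E)$ (Proposition \ref{prop:dualDDB-explicit}), which you cannot arrange for a general log resolution --- and the whole point of the statement, emphasised in the paper, is that $f$ need not be a strong resolution. Concretely: take $X$ a smooth threefold and $f$ the blow-up of a smooth curve $C$, with exceptional divisor $E$ a $\bP^1$-bundle over $C$. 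Then $X$ is pre-$m$-rational for every $m$ and the injectivity hypothesis holds, yet the residue sequence $0\to\Omega^2_Y\to\Omega^2_Y(\log E)\to\Omega^1_E\to 0$ together with $R^2f_*=0$ (fibres are at most curves) gives a surjection $R^1f_*\Omega^2_Y(\log E)\twoheadrightarrow R^1f_*\Omega^1_E\cong\cO_C\neq 0$. So the intermediate statement your hard direction tries to extract from the spectral sequence ($\cH^{>0}(\cF^\bullet_i)=0$ for $i\le m$, $m\ge 2$ here) is simply false, and no amount of bookkeeping with Grothendieck non-vanishing can recover it; likewise, in your easy direction the claimed isomorphism $\DO^i_X\simeq\cF^\bullet_i$ fails in the same example, and the auxiliary claims that pre-$m$-rationality (or the injectivity hypothesis) forces $\codim_X\mathrm{Sing}(X)>m$ are unjustified --- no codimension condition is built into the pre-$m$ notions.

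The paper's route is structurally different and supplies three inputs that your sketch has no substitute for. First, the correct invariant for an arbitrary resolution is $\IO^i_X$, not $Rf_*\Omega^i_Y(\log E)$: Lemma \ref{lem:injection-fromIO-to-push} uses the simplicity of $\IC_X$ to show that the cone of $\IC^H_X\to j_*\bQ^H_U[d]$ lies in $D^{>0}$, whence (by left $t$-exactness of $\GrDR$) $H^j_\m(\IO^i_X)\to H^j_\m(Rf_*\Omega^i_Y(\log E))$ is injective for $i+j\le d$; this is what makes the criterion resolution-independent and reduces it to the $\IO$-criterion of Proposition \ref{prop:lc-def-ofkRational}. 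Second, that criterion is not proved by a local-cohomology spectral sequence: it combines your injectivity with the surjectivity theorem of Park (Theorem \ref{lem:surj-lc-k-rational}, i.e.\ \cite[Theorem 10.5]{popa2024injectivityvanishingdubois}) to get an isomorphism $H^j_\m(\Omega^m_{X,h})\cong H^j_\m(\IO^m_X)$, and then local and Grothendieck duality to conclude $\IO^m_X\cong\Omega^m_{X,h}$. Third, passing from pre-$m$-$\IC$-rationality (vanishing for $\IO$) to pre-$m$-rationality (vanishing for $\DO$) uses \cite[Proposition 7.4]{PP24}, a weight-filtration argument the authors explicitly state they do not know how to avoid. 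Deducing pre-$m$-Du Bois from the factorisation, as you do, is correct and matches the paper, but beyond that point your plan would need to reconstruct all three of these ingredients, and as written it cannot close.
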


\noindent We emphasise that in Proposition \ref{propintro:IO-def}, we do not need to assume that $f$ is a strong resolution of singularities. Moreover, in this proposition we can replace $Rf_*\Omega^i_Y(\log E)$ by $\DO^{i}_X$ or the intersection complex differential forms $\IO^i_X$ (see Definition \ref{def:io}). The key case of $\IO^i_X$ is deduced from Proposition \ref{propintro:DDB-def} and \cite[Theorem 10.5]{popa2024injectivityvanishingdubois}. In turn, this yields Proposition \ref{propintro:IO-def} by invoking the simplicity of the intersection complex $\IC_X$. Finally, the connection between the study of $\IO^i_X$ and $\DO^i_X$ is established by \cite{DOR25} (see also \cite{PP24} and \cite{popa2024injectivityvanishingdubois}). \\

Finally, let us point out that in the proof of Theorem \ref{intro:main-thm} we used the following small lemma. Since we found its statement somewhat surprising, we decided to highlight it in the introduction.

\begin{lemmaIntro} \label{lem:small-introduction}
Let $X$ be a normal connected variety of dimension $d$ defined over $\bC$. Fix an integer $m>0$. Suppose that $X$ has rational and pre-$(m-1)$-Du Bois singularities. Then
\[
\Supp\, \cExt^{-2}(\Omega^{[m]}_X, \omega^\kdot_X)
\]
is a finite set. In particular, by local duality (\ref{eq:local-duality}), the set
\[
\{ \text{closed points } x \in X \mid H^2_x(\Omega^{[m]}_X) \neq 0 \}
\]
is finite.
\end{lemmaIntro}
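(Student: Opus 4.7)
The plan is to combine local duality with the reflexivity of $\Omega^{[m]}_X$. First I would reduce to a local-cohomology vanishing: since $X$ has rational singularities, it is Cohen--Macaulay, so $\omega^\kdot_X = \omega_X[d]$, which gives $\cExt^{-2}(\Omega^{[m]}_X, \omega^\kdot_X) \cong \cExt^{d-2}(\Omega^{[m]}_X, \omega_X)$. At a point $\xi \in X$ of codimension $c$, the restriction $\omega^\kdot_X|_\xi$ agrees with the normalized dualizing complex of $\cO_{X,\xi}$ shifted by $d-c$, so local duality on the local ring $\cO_{X,\xi}$ (of dimension $c$) presents the stalk as
\[
\cExt^{-2}(\Omega^{[m]}_X, \omega^\kdot_X)_\xi \;\cong\; H^{c-d+2}_{\m_\xi}\bigl(\Omega^{[m]}_{X,\xi}\bigr)^{\vee}.
\]
It then suffices to show that the right-hand side vanishes whenever $\xi$ is not a closed point of $X$.

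Next I would invoke the fact that $\Omega^{[m]}_X = (\Omega^m_X)^{**}$ is reflexive and $X$ is normal, so $\Omega^{[m]}_X$ satisfies Serre's condition $S_2$, i.e.\ $\mathrm{depth}\, \Omega^{[m]}_{X,\xi} \geq \min(2, \dim \cO_{X,\xi})$ at every point. For $\xi$ of codimension $c \geq 2$ this forces $H^0_{\m_\xi}$ and $H^1_{\m_\xi}$ of $\Omega^{[m]}_{X,\xi}$ to vanish; for $\xi$ of codimension $1$, $\cO_{X,\xi}$ is a DVR and $\Omega^{[m]}_{X,\xi}$ is torsion-free, so $H^0_{\m_\xi}$ vanishes. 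Any non-closed $\xi$ satisfies $c \leq d-1$, hence $c-d+2 \leq 1$, and combining the above vanishings with the trivial vanishing of negative-degree local cohomology handles every case as long as $d \geq 3$.

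Finally, since the coherent sheaf $\cExt^{-2}(\Omega^{[m]}_X, \omega^\kdot_X)$ has support contained in the set of closed points of $X$, and any closed subset of $X$ consisting only of closed points is finite, the first assertion follows. The in-particular statement is immediate from local duality at each closed point $x$. I do not expect a serious obstacle: the only real content is the local-duality bookkeeping, and the main subtlety will be keeping track of the shift $\omega^\kdot_X|_\xi \cong \omega^\kdot_{\cO_{X,\xi},\mathrm{norm}}[d-c]$ that relates the global and local normalizations of the dualizing complex. Interestingly, the pre-$(m-1)$-Du Bois hypothesis does not appear to be needed for the argument; only normality and the Cohen--Macaulay property coming from rationality are used.
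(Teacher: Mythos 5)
Your argument is correct (with the caveat you note), but it is genuinely different from, and more elementary than, the paper's proof. The paper cuts by a general hyperplane $H$: rationality plus the pre-$(m-1)$-Du Bois hypothesis, via Corollary \ref{cor:restrictionDDB-and-IO} and Proposition \ref{prop:KebekusSchnell}, produce a short exact sequence $0 \to \Omega^{[m-1]}_H(-H) \to \Omega^{[m]}_X\otimes_{\cO_X}\cO_H \to \Omega^{[m]}_H \to 0$, whence $H^1_x(\Omega^{[m]}_X\otimes_{\cO_X}\cO_H)=0$ by reflexivity of the outer terms on $H$; local duality together with the base-change computation \eqref{eq:ext-restriction} and Lemma \ref{lem:general-res-coh} then give $\cExt^{-2}(\Omega^{[m]}_X,\omega^\kdot_X)\otimes_{\cO_X}\cO_H=0$, so the support is zero-dimensional. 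You instead kill the stalks of $\cExt^{-2}(\Omega^{[m]}_X,\omega^\kdot_X)$ at every non-closed point $\xi$ directly by local duality over $\cO_{X,\xi}$: the needed vanishings are $H^0_{\m_\xi}$ at codimension $d-2$ and $H^1_{\m_\xi}$ at codimension $d-1$, and these follow from torsion-freeness and the depth-$\geq 2$ property of the reflexive sheaf $\Omega^{[m]}_X$ on the normal variety $X$; your shift bookkeeping $(\omega^\kdot_X)_\xi\cong\omega^\kdot_{\cO_{X,\xi},\mathrm{norm}}[d-c]$ and the reduction that a closed subset consisting only of closed points is finite are both right (strictly, local duality identifies the \emph{completion} of the stalk with the Matlis dual of the local cohomology, but that detects vanishing, which is all you use). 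What your route buys is generality and transparency: it shows the conclusion holds for any reflexive (indeed torsion-free $S_2$) sheaf on a normal variety of dimension $\geq 3$, so the rationality and pre-$(m-1)$-Du Bois hypotheses are not actually needed — a fact the paper's hyperplane/Du Bois argument does not reveal; what the paper's route buys is only uniformity with the restriction technology used throughout. Your $d\geq 3$ caveat is not a gap relative to the paper: for $d=2$ the statement as written is false even under its hypotheses (already for a smooth surface, $\cExt^{-2}(\Omega^{1}_X,\omega^\kdot_X)=\cHom(\Omega^{1}_X,\omega_X)$ has full support), and the paper's own proof likewise implicitly requires $\dim H\geq 2$.
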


\subsection{Outline}
For the convenience of the reader, we aim to keep our article largely self-contained. To this end, we blackbox all essential properties of Hodge modules via Proposition \ref{prop:JakubsFavouriteFunctor}, which summarises the properties of the key $\GrDR$ functor. We derive all results\footnote{We also blackbox the fact that the derived category of mixed Hodge modules admits a $6$-functor formalism and behaves reasonably well under restriction to a general hyperplane; see Corollary \ref{cor:restrictionDDB-and-IO}.} from this proposition and purely topological properties of the category ${\rm Perv}_{\rm cons}(X,\bC)$ of perverse constructible $\bC$-sheaves. This -- less D-module heavy -- approach more closely reflects the perspective inherent in positive and mixed characteristic (Remark \ref{remark:BL}). Although the article could be significantly shortened by omitting most of the preliminaries we decided to set everything up carefully for the sake of future references, especially in the context of our endeavour to build the theory of higher $F$-injective and higher $F$-rational singularities in positive characteristic. 

We also emphasise that our article is strongly built on and motivated by the work of Shen–Venkatesh–Vo (\cite{SVV}), Popa–Shen–Vo (\cite{popa2024injectivityvanishingdubois}), Popa--Park (\cite{PP24}),  Chen--Dirks--{\mustata} (\cite{CDM23k-rational-complete-intersection}), and many other developments mentioned earlier.

\subsection{Acknowlegements}
The authors thank Bhargav Bhatt, Brad Dirks, Hyunsuk Kim, Sándor Kovács, Mircea Musta{\c{t}}{\u{a}}, Mihnea Popa, Christian Schnell, Duc Vo, Shou Yoshikawa, Zhi Zheng for valuable conversations related to the content of the paper.
Kawakami was supported by JSPS KAKENHI Grant number JP24K16897.
Witaszek was supported by NSF research grants DMS-2101897 and DMS-2401360.

\section{Preliminaries} \label{ss:preliminaries}

In this paper we only work with $k=\bC$. We start by summarising some basic notation and definitions.
\begin{enumerate}
    \item A \emph{variety} over a field $k$ is a scheme, separated and of finite type over $k$. The dimension of a variety $X$ is the maximum over the dimensions of $\cO_{X,x}$ for every closed point $x \in X$.
    \item A \emph{log resolution} $f \colon Y \to X$ of a normal variety $X$ is a projective birational morphism such that $Y$ is regular and the exceptional divisor $E$ is simple normal crossing (see \cite[Tag 0BI9]{stacks-project}).
    \item A \emph{strong resolution} $f \colon Y \to X$ of a normal variety $X$ is a log resolution such that $f$ is an isomorphism over the smooth locus $X_{\rm sm}$.
    \item Given a $d$-dimensional variety $X$ over a field $k$, we define the \emph{dualising complex} $\omega^\kdot_X \in D^b_{\rm coh}(X)$ by the formula $\omega^\kdot_X \coloneqq f^!(k)$, where $f \colon X \to \Spec k$ is the natural projection (cf.\ \cite[Tag 0A85]{stacks-project}). It is of cohomological amplitude $[-d,0]$. Set $\omega_X \coloneqq \cH^{-d}(\omega^\kdot_X)$. When $X$ is normal, $\omega_X$ agrees with the reflexivisation of $\det(\Omega^1_X)$.
    \item $\DDB^i_X \in D^b_{\rm coh}(X)$ denote the $i$-th Du Bois complex of $X$. When $X$ is smooth, $\DDB^i_X = \Omega^i_X$. In general, it is defined by the formula
    \[
    \DDB^i_X \coloneqq R\epsilon_{\kdot,*}\Omega^i_{X_\kdot},
    \]
    where $\epsilon_\kdot \colon  X_{\kdot} \to X$ is a hyperresolution (see \cite[Chapter 5]{GNPP} or \cite[Chapter 7.3]{Peter-Steenbrink(Book)} for details). We set $\Omega^i_{X,h} \coloneqq \cH^0(\DDB^i_X)$. By construction $\Omega^i_{X,h}$ is torsion-free.  Equivalently, we can define $\Omega^i_{X,h}$ as the $h$-sheafification of the sheaf $\Omega^i$ on the $h$-site over $X$. Similarly, $\DDB^i_X$ is the derived $h$-sheafification of $\Omega^i$. Finally, we often denote $\DDB^0_X$ and $\DDB^d_X$ by $\underline{\cO}_X$ and $\underline{\omega}_X$, respectively. Note that the latter object agrees with the Grauert--Riemenschneider sheaf:
    \[
    \underline{\omega}_X = f_*\omega_Y,
    \]
    with $f \colon Y \to X$ a resolution of a normal variety $X$ (\cite[Remark 3.4]{popa2024injectivityvanishingdubois}).
    \item Let $(R,\m)$ be a local Noetherian ring and let $K \in D^b_{\rm coh}(R)$. Then
    \begin{equation} \label{eq:local-duality}
    \cExt^{-i}_R(K, \omega^\kdot_R)^{\wedge} \cong H^i_\m(K)^{\vee}.
    \end{equation}
    This identity is called \emph{local duality} (\stacksproj{0AAK}).
    Here $(-)^{\wedge}$ denotes $\m$-adic derived completion and $(-)^{\vee} = \Hom_R(-,E)$ denotes \emph{Matlis duality} with $E$ being the injective hull of $R/\m$ of $R$. We refer to \stacksproj{08XG} for details (see also \cite[Section 2.4]{tanaka2024quasifesplittingsquasifregularity}).
\end{enumerate}
\vspace{0.5em}
We shall repeatedly use the following result.
\begin{lemma} \label{lem:basic-ses}
Let $X$ be a smooth variety defined over an algebraically closed field $k$ and let $D+E$ be a reduced simple normal crossing divisor with $D$ being prime. Then there exist the following short exact sequences:
\begin{align*}
&0 \to \Omega^i_X(\log E) \to \Omega^i_X(\log D+E) \to \Omega^{i-1}_D(\log E\cap D) \to 0\\
&0 \to \Omega^i_X(\log D+E)(-D) \to \Omega^i_X(\log E) \to \Omega^i_D(\log E \cap D) \to 0.
\end{align*}
\end{lemma}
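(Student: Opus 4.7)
Both sequences are local on $X$, so the plan is to reduce to a computation in a Zariski (or étale) local chart where the SNC divisor acquires a standard coordinate form, and then check exactness on generators. Concretely, around each point of $X$, I would choose coordinates $x_1,\dots,x_d$ with $D=\{x_1=0\}$ and $E=\{x_2\cdots x_{k+1}=0\}$ for some $k\geq 0$; in these coordinates the sheaves $\Omega^i_X(\log E)$ and $\Omega^i_X(\log D+E)$ are free $\cO_X$-modules with explicit bases consisting of wedges of $\frac{dx_j}{x_j}$ for $j$ in the appropriate log locus and $dx_j$ otherwise, and similarly for $\Omega^{\kdot}_D(\log E\cap D)$.

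For the first sequence, I would introduce the Poincaré residue map $\mathrm{Res}_D\colon\Omega^i_X(\log D+E)\to \Omega^{i-1}_D(\log E\cap D)$ defined locally by sending $\frac{dx_1}{x_1}\wedge\eta+\xi\mapsto \eta|_D$, where $\eta$ and $\xi$ are written out in the basis above and do not involve $\frac{dx_1}{x_1}$. Well-definedness and globalisation reduce to the standard functoriality of residues under coordinate change. With this in hand, surjectivity is clear by lifting each basis element of $\Omega^{i-1}_D(\log E\cap D)$ to the evident basis element containing $\frac{dx_1}{x_1}$, and the kernel consists precisely of those basis elements with no $\frac{dx_1}{x_1}$ factor, which by construction is $\Omega^i_X(\log E)$.

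For the second sequence, consider the pullback map $\Omega^i_X(\log E)\to \Omega^i_D(\log E\cap D)$ induced by the closed immersion $D\hookrightarrow X$; this is defined since $D$ meets $E$ transversally. Surjectivity follows because basis elements of $\Omega^i_D(\log E\cap D)$ not involving $dx_1$ lift to basis elements of $\Omega^i_X(\log E)$. To identify the kernel, I would write any section $\omega\in\Omega^i_X(\log E)$ as $\omega=dx_1\wedge\alpha+\beta$ with $\alpha,\beta$ not involving $dx_1$ in the local basis; the condition $\omega|_D=0$ forces $\beta\in x_1\Omega^i_X(\log E)$, while $dx_1\wedge\alpha=x_1\bigl(\frac{dx_1}{x_1}\wedge\alpha\bigr)\in x_1\Omega^i_X(\log D+E)$. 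Hence $\omega\in x_1\Omega^i_X(\log D+E)=\Omega^i_X(\log D+E)(-D)$, and the reverse inclusion is immediate since both $dx_1$ and $x_1$ vanish along $D$. Thus the kernel is $\Omega^i_X(\log D+E)(-D)$ as claimed.

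The only real obstacle is bookkeeping: one has to verify that the locally defined residue and restriction maps glue to well-defined maps of $\cO_X$-modules and that the identifications of kernels are coordinate-independent. This is entirely standard and can be handled either by a direct change-of-variables check or by invoking the functorial construction of $\Omega^i(\log)$ and the Poincaré residue (e.g.\ as in Esnault--Viehweg or Deligne). Once the maps are in place, exactness is purely a linear-algebraic statement about free modules over the local ring, which follows by inspection of the bases.
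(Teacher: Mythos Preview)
Your argument is correct and is precisely the standard local-coordinate proof; the paper itself does not give an argument but simply refers to \cite[Properties 2.3]{EV92}, where exactly this residue-and-restriction computation is carried out.
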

\begin{proof}
See \cite[Properties 2.3]{EV92}.
\end{proof}

\begin{remark} \label{remark:DDB-filtered}
We briefly discuss the Deligne Du-Bois complex $\DDB^\kdot_X$ on a $d$-dimensional variety $X$ over $\bC$. It is constructed via derived $h$-sheafification from the de Rham complex $\Omega^\kdot$ with stupid (Hodge) filtration. It is an element of the filtered derived category and as such it comes with fixed maps
\[
\DDB^\kdot_X = F^0\DDB^\kdot_X \xleftarrow{\gamma_1} F^1\DDB^\kdot_X \xleftarrow{\gamma_2} \ldots \xleftarrow{\gamma_{d}} F^d \DDB^\kdot_X
\]
in $D^b(X,\bC)$. Denote the composition \[
\Gamma_i := \gamma_1 \circ \ldots \circ \gamma_i \colon F^i\DDB^\kdot_X \to \DDB^\kdot_X.
\]
% and choose its extension to an exact triangle in $D^b(X,\bC)$
% \[
% F^i \DDB^\kdot_X \xrightarrow{\Gamma_i} \DDB^\kdot_X \xrightarrow{\lambda_i} \DDB^{\leq i}_X \xrightarrow{+1}
% \]
% where $\DDB^{\leq i}_X \in D^b(X,\bC)$ is defined uniquely (up to a non-unique isomorphism) from this triangle. 

By the 9-lemma (see \cite[Tag 05R0]{stacks-project} or \cite[Proposition 1.1.11]{BBDG18}) 
 applied to the diagram:
\[
\begin{tikzcd}
F^{i+1}\DDB^\kdot_X \ar{d}{\gamma_i} \ar{r}{{\Gamma}_{i+1}} & \DDB^\kdot_X \ar{d}{=} \\
F^{i}\DDB^\kdot_X \ar{r}{{\Gamma}_{i}}  &\DDB^\kdot_X
\end{tikzcd}
\]
we get the following diagram in which each column and row is exact
\begin{equation} \label{eq:DDBfilteredexact}
\begin{tikzcd}
\DDB^i_X[-i-1] \ar{d} \ar{r} & 0 \ar{d} \ar{r} & \DDB^i_X[-i] \ar{d}{\theta_i}  \ar{r}{+1} & \hphantom{a} \\
F^{i+1}\DDB^\kdot_X \ar{d}{\gamma_i} \ar{r}{{\Gamma}_{i+1}} & \DDB^\kdot_X \ar{d}{=} \ar{r}{\Theta_{i}} & \DDB^{\leq i}_X  \ar{d} \ar{r}{+1} & \hphantom{a} \\
F^{i}\DDB^\kdot_X \ar{r}{{\Gamma}_{i}} \ar{d}{+1} &\DDB^\kdot_X \ar{r}{\Theta_{i-1}} \ar{d}{+1} & \DDB^{\leq i-1}_X \ar{r}{+1} \ar{d}{+1} & \hphantom{a}\\
\hphantom{a} & \hphantom{a} & \hphantom{a}
\end{tikzcd}
\end{equation}
The elements $\DDB^{\leq i}_X$ of $D^b(X,\bC)$ are uniquely defined (up to a non-unique isomorphism) from this diagram. The term $\DDB^i_X$ agrees with derived $h$-sheafification of $\Omega^i$, and so it naturally comes from $D^b_{\rm coh}(X)$. We only labelled the maps $\theta_i$ and $\Theta_i$ in the above diagram as the choice of other maps will be irrelevant to us. We emphasise that these maps are not canonical; we fix them throughout this article.
% \begin{equation} \label{eq:DDBFiltExact}
% \DDB^{i}_X[-i] \xrightarrow{\theta_i} \DDB^{\leq i}_X \to \DDB^{\leq i-1}_X \xrightarrow{+1}.
% \end{equation}
% The choice of a map $\DDB^{\leq i}_X \to \DDB^{\leq i-1}_X$ will be irrelevant in our proof.
\end{remark}

\subsection{Review of $t$-structure}

Let $\cD$ be a triangulated category. Let us briefly recall the concept of a $t$-structure; we refer to \cite{FM06} for an introduction to triangulated categories and to \cite{HTT} and \cite{BBDG18} for a more detailed discussion on $t$-structures. 

A $t$-structure on $\cD$ consists of a pair of full subcategories $\cD^{\leq 0}$ and $\cD^{\geq 0}$ satisfying various properties, set up in a way so that the heart of the $t$-structure:
\[
\cD^{\heartsuit} \coloneqq \cD^{\leq 0} \cap \cD^{\geq 0}
\]
is an abelian category\footnote{A basic example of a $t$-structure is given on $D(X)$, the derived category of abelian sheaves on $X$, in which we take $\cD^{\leq 0}$ and $\cD^{\geq 0}$ to denote complexes of cohomological amplitude $[-\infty, 0]$ and $[0, \infty]$, respectively (see \cite[Example 8.1.2]{HTT})} (see \cite[Theorem 8.1.9(a)]{HTT}). We define $\cD^{\leq i} \coloneqq \cD^{\leq 0}[-i]$ and $\cD^{\geq i} \coloneqq \cD^{\geq 0}[-i]$. There exist truncation functors (\cite[Proposition 8.1.4]{HTT}):
\[
{}^t\tau^{\leq i} \colon \cD \to \cD^{\leq i} \quad \text{ and } \quad {}^t\tau^{\geq i} \colon \cD \to \cD^{\geq i}
\]
defined as adjoints to natural inclusions $\cD^{\leq i} \to \cD$ and $\cD^{\geq i} \to \cD$, respectively. Therewith, for $K \in \cD$ we can define
\[
{}^t\cH^i(-) \colon \cD \to \cD^{\heartsuit} \quad \text{ given by} \quad  {}^t\cH^i(-) \coloneqq  ({}^t\tau^{\leq i} \circ {}^t\tau^{\geq i}(-))[i] = ({}^t\tau^{\geq i}\circ {}^t\tau^{\leq i}(-))[i],
\]
 where the last equality follows from \cite[Proposition 8.18 (iii)]{HTT}.
\begin{remark}
All the $t$-structures we are going to work with will be \emph{non-degenerate} in the sense that $\bigcap_{i} \cD^{\geq i} = 0$ and $\bigcap_{i} \cD^{\leq i} = 0$. Under this assumption, the system of functors ${}^t\cH^i(-)$ is conservative. In particular, $K \in \cD^{\heartsuit}$ is equivalent to ${}^t\cH^i(K) = 0$ for $i \neq 0$ (see \cite[Proposition 1.3.7]{BBDG18}).  
\end{remark}
Any exact triangle $A \to B \to C \xrightarrow{+1}$ in $\cD$ induces a long exact sequence (\cite[Proposition 8.1.11]{HTT}):
\begin{equation} \label{eq:les-of-t-cohomology}
\ldots \to {}^t\cH^i(A) \to {}^t\cH^i(B) \to {}^t\cH^i(C) \to {}^t\cH^{i+1}(A) \to \ldots
\end{equation}
of objects of the abelian category $\cD^{\heartsuit}$. In particular, we see that if $A, C \in \cD^{\heartsuit}$, then $B \in \cD^{\heartsuit}$, as well.

\begin{remark} \label{rem:t-ker-coker}
Let $B, C \in \cD^{\heartsuit}$ and let $f \colon B \to C$ be any map. Extend this map to an exact triangle
\[
A \to B \to C \xrightarrow{+1}
\]
in $\cD$. By the long exact sequence (\ref{eq:les-of-t-cohomology}), we see that:
\begin{align*}
{\rm ker}(B \to C) &= {}^t\cH^0(A) \in \cD^{\heartsuit}\\  
{\rm coker}(B \to C) &= {}^t\cH^1(A) \in \cD^{\heartsuit}.
\end{align*}
In particular, $B \to C$ is an injection in $\cD^{\heartsuit}$ if and only if ${}^t{\mathcal{H}}^0(A)=0$ and a surjection if and only if ${}^t{\mathcal{H}}^1(A) = 0$.
\end{remark}

Given an exact functor $F \colon \cD_1 \to \cD_2$ between triangulated categories with $t$-structures, we define
\[
{}^tF \colon \cD_1^{\heartsuit} \to \cD_2^{\heartsuit} \quad \text{  by the formula } \quad {}^tF(-) \coloneqq {}^t\cH^0(F(-)).
\]
Moreover, we say that $F \colon \cD_1 \to \cD_2$  is 
\begin{itemize}
    \item \emph{left $t$-exact} if $F(\cD^{\geq 0}_1) \subseteq \cD^{\geq 0}_2$, and 
    \item \emph{right $t$-exact} if $F(\cD^{\leq 0}_1) \subseteq \cD^{\leq 0}_2$. 
    \item \emph{$t$-exact} if $F$ is left $t$-exact and right $t$-exact.
\end{itemize}

\subsection{Review of perverse coherent $t$-structure}
In this subsection, $X$ is a variety over $\bC$. For a more detailed review of the perverse coherent $t$-structure, we refer to \cite[Section 3.1]{BMPSTWW2} and \cite[Section 4.1]{BBLSZ25}.

The bounded derived category of coherent sheaves $D^b_{\rm coh}(X)$ admits a standard $t$-structure with the heart being exactly ${\rm Shv}_{\rm coh}(\cO_X)$. In what follows we define a different $t$-structure, called \emph{perverse coherent $t$-structure}, whose goal is to capture the Cohen-Macaulayness property.

\begin{definition} \label{def:perv-cons-t}
For $K \in D^b_{\rm coh}(X)$, we define:
\begin{enumerate}
    \item $K \in {}^pD^{\leq 0}(X)$ if and only if one of the following equivalent conditions holds (see \cite[Lemma 3.2]{BMPSTWW2}):
        \begin{itemize} 
            \item $K_x \in D^{\leq -\dim\overline{\{x\}}}$ for all points $x \in X$
            \item $R\Gamma_x(K_x) \in D^{\leq -\dim\overline{\{x\}}}$ for all points $x \in X$.
        \end{itemize}
    \item $K \in {}^pD^{\geq 0}(X)$ if and only if $R\Gamma_x(K_x) \in D^{\geq -\dim\overline{\{x\}}}$ for all  points $x \in X$.
\end{enumerate}
\end{definition}
\begin{remark} \label{rem:perv-coh-maximal-only}
In fact the condition that $R\Gamma_x(K_x) \in D^{\leq -\dim\overline{\{x\}}}$ or  $R\Gamma_x(K_x) \in D^{\leq -\dim\overline{\{x\}}}$ can be checked at closed points only. Specifically (see \cite[Remark 3.5(a)]{BMPSTWW2}):
\begin{enumerate}
    \item $K \in {}^pD^{\leq 0}(X)$ if and only if $R\Gamma_x(K_x) \in D^{\leq 0}$ for all closed points $x \in X$.
    \item $K \in {}^pD^{\geq 0}(X)$ if and only if $R\Gamma_x(K_x) \in D^{\geq 0}$ for all closed points $x \in X$.
\end{enumerate}
\end{remark}

\begin{remark}
    Note that \cite[Lemma 3.4]{BMPSTWW2} and subsequent results such as those contained in \cite[Remark 3.5(a)]{BMPSTWW2} assume that $X$ is equidimensional. This is explained therein to guarantee $\omega^\kdot_{X,x}[-\dim \overline{ \{x\}}]$ to be normalised for $\cO_{X,x}$. However, this normalisation is always guaranteed by \cite[Tag 0AWF]{stacks-project} and the fact that a dimension function is unique up to shift. Thus, we do not need to assume equidimensionality in this section.
\end{remark}

The above definition endows $D^b_{\rm coh}(X)$ with a $t$-structure (called \emph{perverse coherent $t$-structure}) whose heart:
\[
{\rm Perv}_{\rm coh}(X) \coloneqq {}^pD^{\geq 0} \cap {}^pD^{\leq 0} \subseteq D^b_{\rm coh}(X)
\]
is an abelian category (cf.\ \cite[Definition 4.5 and Remark  4.11]{BBLSZ25}). Note that a coherent sheaf $M \in {\rm Shv}_{\rm coh}(\cO_X) \subseteq D^b_{\rm coh}(X)$ with irreducible support of dimension $e$ satisfies the property that $M[e]$ is perverse if and only if $M$ is Cohen-Macaulay!
\begin{remark}
Although the above definitions may seem excessively formal, they allow for discussing Cohen-Macaulayness and local cohomology in a much more efficient manner. For example, given two Cohen-Macaulay coherent sheaves $M, N$ with support equal to $X$ we have that
\[
M[d] \to N[d]
\]
is an injection in the abelian category ${\rm Perv}_{\rm coh}(X)$ if and only if
\[
H^d_\m(M) \to H^d_\m(N)
\]
is injective for every maximal ideal $\m$. This easily follows by considering an exact triangle:
\[
K \to M[d] \to N[d] \xrightarrow{+1}
\]
and noting that $M[d] \to N[d]$ is injective if and only if $K \in {}^pD^{>0}$ (see Remark \ref{rem:t-ker-coker}) if and only if $H_\m^{>0}(K)=0$ for every maximal ideal $\m$ (see Remark \ref{rem:perv-coh-maximal-only}) if and only if $H_\m^0(M[d]) \to H_\m^0(N[d])$ is injective.
\end{remark}

The triangulated category $D^b_{\rm coh}(X)$ admits a Grothendieck duality functor
\[
\bD_X \colon D^b_{\rm coh}(X) \to D^b_{\rm coh}(X) \quad \text{ given by } \quad \bD_X(-) = R\Hom_{\cO_X}(-, \omega^\kdot_X),
\]
where $\omega^\kdot_X$ is the dualising complex. Under this functor, the perverse $t$-structure is dual to the standard $t$-structure. Specifically (see \cite[Lemma 3.4]{BMPSTWW2}): 
\begin{align} \label{eq:perv-coh-dual}
K \in {}^pD^{\leq 0}(X)\quad  &\text{ if and only if }\quad  \bD_X(K) \in D^{\geq 0}(X), \text{ and } \\
K \in {}^pD^{\geq 0}(X)\quad &\text{ if and only if }\quad  \bD_X(K) \in D^{\leq 0}(X). \nonumber
\end{align}
By the above, the dualising complex $\omega^\kdot_X$ is an example of a perverse coherent sheaf as $\bD_X(\omega^\kdot_X) \cong \cO_X$.

\subsection{Review of perverse constructible $t$-structure} \label{ss:perverse-constructible}
In this subsection, $X$ is a variety over $\bC$. We will only work with the middle perverse $t$-structure. 
When dealing with constructible sheaves, all functors will denote derived functors unless otherwise stated; for example, we always write $f_*$ instead of $Rf_*$. For a more detailed review, we refer to \cite[Section 8.1.2]{HTT}.

We say that a $\bQ_{X^{\rm an}}$-modules $F$ is \emph{algebraically constructible} if there exists a stratification $X = \bigsqcup_{\alpha \in A} X_{\alpha}$ such that $F|_{X^{\rm an}_{\alpha}}$ is locally constant for every $\alpha \in A$ (see \cite[Definition 4.5.6]{HTT}). We denote by $D^b_{\rm cons}(X, \bQ)$ the full subcategory of $D^b_{\rm cons}(X^{\rm an}, \bQ)$ consisting of bounded complexes of $\bQ_{X^{\rm an}}$-modules whose cohomology groups are algebraically constructible. By abuse of notation we shall denote $\bQ_{X^{\rm an}}$ by $\bQ_X$.

\begin{remark} \label{remark:exact-triangles}
Let $j \colon U \to X$ be an inclusion of an open subscheme and let $i \colon Z \to X$ be the closed embedding of the complement of $U$. We shall repeatedly use the following standard exact triangles for $K \in D^b_{\rm cons}(X,\bQ)$:
\begin{align*}
&j_!j^* K \to K \to i_*i^*K \xrightarrow{+1} \\
&i_*i^! K \to K \to j_*j^*K \xrightarrow{+1}.
\end{align*}
\end{remark}

\begin{definition} \label{def:perverse-cons-t-structure}
Let $K \in D^b_{\rm cons}(X, \bQ)$ and let $X = \bigsqcup_{\alpha \in A} X_{\alpha}$ be any stratification consisting of connected strata such that $i_{\alpha}^*K$ and $i_{\alpha}^!K$ have locally constant cohomology sheaves where $i_{\alpha} \colon X_{\alpha} \to X$ are the natural inclusions. We define (\cite[Proposition 8.1.22]{HTT}):
\begin{enumerate}
    \item $K \in {}^pD^{\leq 0}(X)$ if and only if $i_{\alpha}^*K \in D^{\leq - {\rm dim}(X_{\alpha})}$ for every $\alpha \in A$.
    \item $K \in {}^pD^{\geq 0}(X)$ if and only if $i_{\alpha}^!K \in D^{\geq - {\rm dim}(X_{\alpha})}$ for every $\alpha \in A$.
\end{enumerate}
\end{definition}
\noindent The definition is independent of the choice of the stratification (\cite[Proposition 8.1.24]{HTT}). Moreover, it endows $D^b_{\rm cons}(X,\bQ)$ with a $t$-structure (called \emph{perverse $t$-structure}) whose heart:
\[
{\rm Perv}_{\rm cons}(X,\bQ) \coloneqq {}^pD^{\geq 0} \cap {}^pD^{\leq 0} \subseteq D^b_{\rm cons}(X,\bQ)
\]
is an abelian category (\cite[Theorem 8.1.27]{HTT}). 

\begin{remark} An example of a perverse constructible sheaf is given by $\bQ_X[d]$ when $X$ is smooth 
 (\cite[Proposition 8.1.31]{HTT}). In general:
\begin{equation} \label{eq:trivial-q-leq0}
\bQ_X[d] \in {}^pD^{\leq 0} 
\end{equation}
by Definition \ref{def:perverse-cons-t-structure}(1) as $i_{\alpha}^*$ is exact and $i^*_{\alpha}\bQ_X[d] = \bQ_{X_{\alpha}}[d]$ lives in degree $-d$.
\end{remark}

A key property of the perverse constructible $t$-structure is that it is self-dual. 
\begin{definition} For the orientation complex $\omega^\kdot_X \in D^b_{\rm cons}(X,\bQ)$ we define Verdier duality (\cite[Definition C.2.14 and C.2.15]{HTT}):
\[
\bD_X \colon D^b_{\rm cons}(X,\bQ) \to D^b_{\rm cons}(X,\bQ) \ \text{ given by }\  \bD_X(-) = R\Hom_{\bQ_X}(-, \omega^\kdot_X).
\]
\end{definition}
\noindent Then $\bD_X \circ \bD_X \cong {\rm id}$ and for $K \in D^b_{\rm cons}(X,\bQ)$ we have (\cite[Proposition 8.1.33]{HTT}):
\begin{equation} \label{eq:cons-self-dual}
K \in {}^pD^{\leq 0} \iff \bD_X(K) \in {}^pD^{\geq 0}.
\end{equation}
% \begin{remark}
% In what follows, we shall use $t$-exactness of various functors $f^*, f_*, f_!, f^!$, etc. For example, for a closed embedding $i \colon Z \to X$, the functor $i_* \colon D^b_{\rm cons}(Z,\bQ) \to D^b_{\rm cons}(X,\bQ)$ is $t$-exact and $i^*\colon D^b_{\rm cons}(X,\bQ) \to D^b_{\rm cons}(Z,\bQ) $ is right $t$-exact. We refer to the cheat sheet table \cite[Remark 3.11]{BMPSTWW2} containing all such $t$-exactness properties. Note that this reference pertains to the constructible sheaves in the \'etale topology, but the same results apply in the analytic topology (equivalently, one can consult \cite[p. 199-201]{HTT}).

% Also recall the definition ${}^pf_*(-) \coloneqq {}^p\cH^0(f_*(-))$ for $f_* \colon D^b_{\rm cons}(Y, \bQ) \to D^b_{\rm cons}(X, \bQ)$ where $f \colon Y \to X$. The same notation is used for other functors.
% \end{remark}

One of the most important examples of a perverse sheaf is the intersection complex $\IC_X \in {\rm Perv}_{\rm cons}(X,\bQ)$. From now on, following \cite{HTT}, we assume that $X$ is irreducible.

\begin{definition} \label{def:IC} Pick a smooth open dense subscheme $j \colon U \hookrightarrow X$ and set (\cite[Definition 8.2.2 and 8.2.13]{HTT}):
\[
\IC_X \coloneqq {\rm im}\big({}^pj_!(\bQ_U[d]) \to {}^pj_*(\bQ_U[d])\big).
\]
This object is well-defined, that is,  independent of the choice of $j$.
\end{definition}
Above we used the existence of the natural map $j_!(\bQ_U[d]) \to j_*(\bQ_U[d])$ in $D^b_{\rm cons}(X,\bQ)$. The intersection complex agrees with $\bQ_X[d]$ when $X$ is smooth. 

\begin{lemma} The key properties of $\IC_X$ are as follows\footnote{the map in (\ref{IC2}) is formed by applying ${}^p\cH^0(-)$ to $j_!(\bQ_U[d]) \to \bQ_X[d] \to j_*(\bQ_U[d])$}:
\begin{align}
&\text{ $\IC_X$ is self-dual: } \bD_X(\IC_X) = \IC_X, \label{IC1} \\
&\text{ the following map is a surjection of perverse sheaves: }\! {}^p\cH^0(\bQ_X[d]) \to \IC_X, \label{IC2} \\
&\text{ $\IC_X$ is simple\footnotemark\ when $X$ is irreducible,} \label{IC3} \\
&\IC_X \in D^{<0}_{\rm cons}(X,\bQ), \text{ that is, } \cH^{i}(\IC_X)=0 \text{ for $i \geq 0$}, \text{provided $\dim X>0$}. \label{IC4}
\end{align}
\end{lemma}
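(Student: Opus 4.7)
The plan is to treat the four properties in turn, building on the image construction of $\IC_X$ in ${\rm Perv}_{\rm cons}(X,\bQ)$ and Verdier duality. A key intermediate step will be to establish the support conditions
\[
j^*\IC_X \cong \bQ_U[d], \qquad i^*\IC_X \in {}^pD^{\leq -1}(Z), \qquad i^!\IC_X \in {}^pD^{\geq 1}(Z),
\]
where $i \colon Z \hookrightarrow X$ is the reduced complement of $U$; once these are in hand, simplicity and the $\cH^{\geq 0}$-vanishing follow formally.

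For self-duality (\ref{IC1}), I would apply $\bD_X$ to the defining arrow ${}^pj_!(\bQ_U[d]) \to {}^pj_*(\bQ_U[d])$. Since $U$ is smooth of pure dimension $d$, one has $\bD_U(\bQ_U[d]) \cong \bQ_U[d]$, and the standard exchanges $\bD_X j_! \cong j_* \bD_U$, $\bD_X j_* \cong j_! \bD_U$ combined with (\ref{eq:cons-self-dual}) show that $\bD_X$ swaps the two objects and reverses the arrow; as $\bD_X$ is an exact contravariant equivalence on the heart it preserves images, so $\bD_X(\IC_X) \cong \IC_X$. For (\ref{IC2}), I factor the canonical morphism as $j_!(\bQ_U[d]) \to \bQ_X[d] \to j_*(\bQ_U[d])$ and use the triangle $j_!(\bQ_U[d]) \to \bQ_X[d] \to i_*\bQ_Z[d] \xrightarrow{+1}$ from Remark \ref{remark:exact-triangles}. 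Since $X$ is irreducible of dimension $d$, every stratum $X_\alpha \subseteq Z$ satisfies $d \geq \dim X_\alpha + 1$, hence $i_*\bQ_Z[d] \in {}^pD^{\leq -1}(X)$, and the long exact sequence in ${}^p\cH^*$ yields a surjection ${}^pj_!(\bQ_U[d]) \twoheadrightarrow {}^p\cH^0(\bQ_X[d])$. Applying ${}^p\cH^0$ to the factorization shows that the defining surjection ${}^pj_!(\bQ_U[d]) \twoheadrightarrow \IC_X$ itself factors through ${}^p\cH^0(\bQ_X[d])$, so the latter also surjects onto $\IC_X$.

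The support conditions come from the triangle $j_!j^*\IC_X \to \IC_X \to i_*i^*\IC_X \xrightarrow{+1}$. The identity $j^*\IC_X \cong \bQ_U[d]$ follows by applying $j^*$ to the defining arrow and using $j^*j_! \cong j^*j_* \cong {\rm id}$; since $j_!$ is right $t$-exact, the perverse long exact sequence combined with the surjection ${}^p\cH^0(j_!j^*\IC_X) = {}^pj_!(\bQ_U[d]) \twoheadrightarrow \IC_X$ forces ${}^p\cH^0(i_*i^*\IC_X) = 0$, giving $i^*\IC_X \in {}^pD^{\leq -1}(Z)$ by $t$-exactness of $i_*$. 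The condition on $i^!$ is dual via (\ref{IC1}) and $\bD_Z i^* \cong i^! \bD_X$. Simplicity (\ref{IC3}) is then standard: any non-zero sub-perverse sheaf $F \hookrightarrow \IC_X$ restricts on $U$ to a sub of the simple $\bQ_U[d]$, so $F|_U = 0$ or $F|_U = \bQ_U[d]$. If $F|_U = 0$ then $F \cong i_*F'$ and $\Hom(i_*F', \IC_X) = \Hom(F', i^!\IC_X) = 0$, a contradiction; if $F|_U = \bQ_U[d]$ then $\IC_X/F \cong i_*Q'$ and $\Hom(\IC_X, i_*Q') = \Hom(i^*\IC_X, Q') = 0$, forcing $F = \IC_X$. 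Quotients are symmetric. Finally, for (\ref{IC4}), the condition $i^*\IC_X \in {}^pD^{\leq -1}(Z)$ implies $\cH^i(\IC_X)_z = 0$ for every $z \in Z$ and $i \geq 0$, while on $U$ the sheaf $\IC_X \cong \bQ_U[d]$ has cohomology only in degree $-d$, which is negative when $d > 0$.

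The main obstacle is the derivation of the support conditions on $i^*$ and $i^!$ from the image construction: although classical in the theory of intermediate extensions, this step requires honest bookkeeping with the perverse long exact sequence and forms the technical heart of the argument; once it is in place, each of (\ref{IC1})--(\ref{IC4}) is a short formal consequence.
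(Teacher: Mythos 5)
Your proposal is correct, but it takes a more self-contained route than the paper for three of the four items. For (\ref{IC2}) your argument is essentially identical to the paper's: the triangle $j_!\bQ_U[d] \to \bQ_X[d] \to i_*\bQ_Z[d] \xrightarrow{+1}$, the observation that $i_*\bQ_Z[d] \in {}^pD^{<0}$ because $\dim Z < d$ and $i_*$ is $t$-exact, and the resulting surjectivity of ${}^pj_!(\bQ_U[d]) \to {}^p\cH^0(\bQ_X[d])$ followed by the factorisation through the image. For (\ref{IC1}), (\ref{IC3}), (\ref{IC4}), the paper simply cites \cite{HTT} (Theorem 8.2.14, Corollary 8.2.10 with Theorem 8.2.24, and Proposition 8.2.11, respectively), whereas you re-derive these from the image definition by first establishing the support conditions $j^*\IC_X \cong \bQ_U[d]$, $i^*\IC_X \in {}^pD^{\leq -1}(Z)$, $i^!\IC_X \in {}^pD^{\geq 1}(Z)$; this is exactly the classical BBD/HTT characterisation of the intermediate extension, and your deductions of simplicity and of $\cH^{\geq 0}(\IC_X)=0$ from it are standard and correct (simplicity of $\bQ_U[d]$ uses that $U$ is smooth and connected, which holds since $X$ is irreducible). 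The one step you should tighten is the derivation of $i^*\IC_X \in {}^pD^{\leq -1}$: you need that ${}^p\cH^0$ of the adjunction counit $j_!j^*\IC_X \to \IC_X$ is surjective, which requires identifying it with the defining surjection ${}^pj_!(\bQ_U[d]) \twoheadrightarrow \IC_X$; this follows because, by adjunction, $\Hom({}^pj_!\bQ_U[d], \IC_X) \cong \Hom(\bQ_U[d], j^*\IC_X)$, so a map out of ${}^pj_!\bQ_U[d]$ into $\IC_X$ is determined by its restriction to $U$, and both maps restrict to the identity. With that bookkeeping made explicit your argument is complete; the trade-off is length versus self-containment, since the paper's proof delegates precisely these standard facts to the literature.
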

\footnotetext{it admits no non-trivial sub-objects or quotients}
\begin{proof}
As for (\ref{IC1}) see \cite[Theorem 8.2.14]{HTT}, for (\ref{IC3}) see \cite[Corollary 8.2.10 and Theorem 8.2.24]{HTT}, and for (\ref{IC4}) see \cite[Proposition 8.2.11]{HTT}. In order to see (\ref{IC2}), we need to prove that $(\star)$ in the following factorisation is a surjection of perverse sheaves
\[
{}^pj_!(\bQ_U[d]) \xrightarrow{(\star)} {}^p\cH^0(\bQ_X[d]) \to {}^pj_*(\bQ_U[d]).
\]
This can be deduced from the exact triangle:
\[
j_!(\bQ_U[d]) \to \bQ_X[d] \to i_*\bQ_Z[d] \xrightarrow{+1},
\]
where $i \colon Z \to X$ is the closed embedding of the complement of $U$. Indeed, since $i_*$ is $t$-exact (\cite[Corollary 8.1.44(ii)]{HTT}) and $\bQ_Z[\dim Z] \in {}^pD^{\leq 0}_{\rm cons}(Z,\bQ)$ by (\ref{eq:trivial-q-leq0}), we get that $i_*\bQ_Z[d] \in {}^pD^{< 0}_{\rm cons}(X,\bQ)$. Hence the surjectivity of $(\star)$ follows by the long exact sequence of perverse cohomology (\ref{eq:les-of-t-cohomology}).
\end{proof}

% \begin{lemma} \label{lem:IC}
% Let $X$ be a $d$-dimensional normal variety over $\bC$. Then
% \[
% \IC_X \in D^{< 0}_{\rm cons}(X, \bC).
% \]
% \end{lemma}
% \begin{proof}
% Since $\IC_X$ is perverse, $\IC_X \in D^{\leq 0}$. Thus, we need to show that
% \[
% \cH^0(\IC_X)=0.
% \]
% To this end, consider the following exact triangle
% \[
% \tau^{<0}\IC_X \to \IC_X \to \cH^0(\IC_X) \xrightarrow{+1}.
% \]
% By definition of perverse $t$-structure, we have that $\dim \Supp \cH^0(\IC_X)=0$ and so $\cH^0(\IC_X)$ is perverse. Moreover, $\tau^{<0}\IC_X \in {}^pD^{\leq 0}$ by definition of the perverse $t$-structure again. Therefore,
% \[
% \IC_X \to \cH^0(\IC_X)
% \]
% is a surjection of perverse sheaves. But $\IC_X$ does not admit any non-trivial sub- or quotient-objects, so $\cH^0(\IC_X) = 0$.
% \end{proof}

\subsection{Hodge modules} \label{ss:HM}
Let $X$ be a reduced connected variety of dimension $d$ defined over $\bC$ and let $u \colon X \hookrightarrow W$ be a closed embedding into a smooth variety $W$ of dimension $d_W$. It is beyond this article to provide an in-depth introduction to Saito's theory of Hodge modules. We refer the reader to the excellent survey \cite{SchnellSurvey19} on this topic. For convenience of the reader, we blackbox all the properties of Hodge modules in this article and deduce all the proofs directly from the properties of the category of constructible sheaves ${\rm Perv}_{\rm cons}(X,\bQ)$ reviewed above. From this perspective, the reader might think of a Hodge module as a perverse constructible sheaf with \emph{some additional data}. Formally, a mixed Hodge module $\cM$ consists of the following:
\begin{enumerate}
    \item[(a)] a perverse constructible sheaf $K \in {\rm Perv}_{\rm cons}(X, \bQ)$, and
    \item[(b)] a left $D$-module\footnote{Formally speaking, by a $D$-module on $X$ we mean a $D$-module on the smooth ambient space $W$ with support contained in $X$. A category of such objects is independent of the choice of $W$ and the embedding $u \colon X \to W$ (cf.\ \cite[Proposition 6.1]{Schnell16}).} $M$ together with increasing filtrations $F_\kdot$ and  $W_\kdot$,
\end{enumerate}
that satisfy various intricate properties. In particular, $K$ determines the $D$-module $M$ via the Riemann-Hilbert correspondence, but not the filtrations $F_\kdot$ and $W_\kdot$.

We write $\cM = (K, M, F_\kdot, W_\kdot)$. We denote the underlying perverse constructible sheaf $K$ of $\cM$ by ${\rm rat}(\cM)$. We say that a non-zero mixed Hodge module $\cM$ is \emph{pure} if $\Gr^W_\kdot(\cM)$ is non-trivial in one degree only. For technical reasons, one also introduces \emph{Tate's twist} of a mixed Hodge module $\cM$:
\[
\cM(k) \coloneqq (K, M, F_{\kdot -k}, W_{\kdot + 2k}).
\]

The derived category of mixed Hodge modules is denoted by $D^b_{\rm MHM}(X)$. The formation of ${\rm rat}(-)$ extends to an exact functor ${\rm rat} \colon D^b_{\rm MHM}(X) \to D^b_{\rm cons}(X,\bQ)$. Similarly, Tate's twist extends to an exact functor of derived categories as well. 

In order to understand proofs in this article, the reader need only  be aware of the following facts about mixed Hodge modules.
\begin{enumerate}
    \item The derived category $D^b_{\rm MHM}(X)$ of mixed Hodge modules admits a $t$-structure which agrees with the standard $t$-structure on the derived category of $D$-modules and with the perverse $t$-structure on $D^b_{\rm cons}(X, \bQ)$ from Subsection \ref{ss:perverse-constructible}. Specifically, the following functor is $t$-exact
    \[
    {\rm rat} \colon D^b_{\rm MHM}(X) \to D^b_{\rm cons}(X, \bQ).
    \]
    In particular\footnote{Since in practice, one considers only one $t$-structure on $D^b_{\rm MHM}(X)$, we drop superscript $t$ or $p$ and denote the Hodge module cohomology just by $\cH^i$}, ${\rm rat}(\cH^i(\cM)) = {}^p\cH^i({\rm rat}(\cM))$ for every integer $i$.
    \item The above functor ${\rm rat}$ is conservative. In particular, for $\cM \in D^b_{\rm MHM}(X)$, we have that $\cM = 0$ if and only if ${\rm rat}(\cM)=0$. %In turn, a map $f \colon \cM \to \cN$ for $\cM, \cN \in D^b_{\rm MHM}(X)$ is an isomorphism if and only if ${\rm rat}(f) \colon {\rm rat}(\cM) \to {\rm rat}(\cN)$ is an isomorphism.
    \item $D^b_{\rm MHM}(X)$ admits a $6$-functor formalism $(f_*, f_!, f^*, f^!, \Hom(-,-), - \otimes -)$ which agrees with that on $D^b_{\rm cons}(X, \bQ)$; for example, we have $f_* \circ {\rm rat} = {\rm rat} \circ f_*$ (cf.\ \cite[Theorem 2.4, Proposition 2.5]{Park24}). In this context, the exact triangles from Remark \ref{remark:exact-triangles} extend to the derived category of mixed Hodge modules. Moreover, there is a duality functor
    \[
    \bD_X \colon D^b_{\rm MHM}(X) \to D^b_{\rm MHM}(X)
    \]
    which agrees with Verdier duality $\bD_X$ on $D^b_{\rm cons}(X,\bQ)$, namely: \[
    \bD_X \circ {\rm rat} =  {\rm rat} \circ \bD_X.
    \]
    Following our convention for constructible sheaves, all the functors on mixed Hodge modules are automatically derived in this article.
    %\item The filtrations on Hodge modules are \emph{strict}. Specifically, we shall use that for $M \in D^b_{\rm MHM}(X)$
    \item There exists a \emph{trivial} object $\bQ^H_X \in D^b_{\rm MHM}(X)$ such that ${\rm rat}(\bQ_X^H) = \bQ_X$. When $X$ is smooth:
    \begin{equation} \label{eq:dual-trivial-HM}
    \bD_X(\bQ^H_X[d]) \cong \bQ^H_X[d](d).
    \end{equation}
\end{enumerate}
\noindent In the smooth case, $\bQ^H_X[d]$ is a Hodge module with underlying perverse constructible sheaf $\bQ_X[d]$, the left $D$-module $\cO_X$, and trivial filtrations $F_\kdot$ and $W_{\kdot}$, specifically: 
\[
F_{<0} \cO_X = W_{<d} \cO_X = 0 \quad \text{ and } \quad F_{\geq 0} \cO_X = W_{\geq d} \cO_X = \cO_X.
\]
In general, we define $\bQ^H_X \coloneqq a^*{\bQ^H_{\rm pt}}$ where $a \colon X \to \Spec \bC$ is the natural map.

\begin{remark}
All Hodge modules in this article are constructed from $\bQ^H_X$ via the $6$-functor formalism. In particular, when $X$ is irreducible we can define the intersection complex Hodge module:
\[
\IC^H_X \coloneqq {\rm im}(\cH^0(j_!\bQ^H_{U}[d]) \to \cH^0(j_*\bQ^H_{U}[d]))
\]
so that ${\rm rat}(\IC^H_X) = \IC_X \in D^b_{\rm cons}(X,\bC)$. It is self dual up to Tate twist (\cite[4.5.13]{Saito90}):
\begin{equation} \label{eq:dual-IC}
\bD_X(\IC^H_X) \cong \IC^H_X(d).
\end{equation}
\end{remark}

All the proofs in this article are formally deduced from the above properties by ways of Proposition \ref{prop:JakubsFavouriteFunctor}, Lemma \ref{lem:grdr-boundary} and Proposition \ref{prop:restriction-hodge-module} below.

\begin{remark} \label{rem:linsys-rest}
Before moving forward, we need the following small observation. Let $X$ be a projective variety over $\bC$ and let $\cL$ be a base-point-free line bundle on $X$. Then there exists an embedding $u \colon X \hookrightarrow W$ and a line bundle $\cL_W$ such that the restriction map
$H^0(W, \cL_W) \to  H^0(X, 
\cL)$
is surjective. 

To see this, let $f \colon X \to \bP^m$ be a map induced by the linear system $|\cL|$ so that $f^*\cO_{\bP^m}(1)=\cL$. Since $X$ and $\bP^m$ are projective, the morphism $f$ is $H$-projective (see \stacksproj{0C4Q} and \stacksproj{087S}). This means that $X$ admits an embedding $u \colon X \hookrightarrow {\mathbb{P}}^n_{\bP^m} =: W$ over $f \colon X \to \bP^m$. This embedding together with $\cL_W \coloneqq \pi^*\cO_{\bP^m}(1)$  for the projection $\pi \colon W \to \bP^m$ satisfy the required properties. 
% By picking a sufficiently big multiple $\cA$ of a $g$-ample line bundle on $X$, we can consider the embedding
% \[
% u \colon X \hookrightarrow {\rm Proj}_T(\cA) =:W,
% \]
% which satisfies the required conditions.
\end{remark}

\subsection{$\GrDR$ functor}
We continue with the notation from Subsection \ref{ss:HM}. One of the most important functors in the theory of Hodge modules is the $\GrDR$ functor:
\[
\GrDR \colon D^b_{\rm MHM}(X) \to D^b_{\rm coh}(X) \qquad \cM \mapsto \GrDR(\cM).
\]
\begin{remark} \label{rem:grdr-in-action}
Specifically, let $\cM$ be a mixed Hodge module corresponding to $(M,F_\kdot)$ a filtered $D$-module on $W$ with support on $X$. Set
\begin{equation} \label{eq:defGRDR}
\Gr^F_k \DR(\cM) \coloneqq \Gr^F_k M \to \Gr^F_{k+1} M \otimes_{\cO_W} \Omega^1_W \to \cdots \to  \Gr^F_{k+{d_W}} M \otimes_{\cO_W} \Omega_W^{d_W},  
\end{equation}
with the complex living in degrees $[-d_W,0]$. 
Then
\[
\GrDR(\cM) \coloneqq \bigoplus_k \Gr^F_k \DR(\cM).
\]
This construction automatically extends to $D^b_{\rm MHM}(X)$. We drop the superscript $F$ as we never explicitly consider any other filtration such as the weight filtration $W_\kdot$. 

%{\cred In (\ref{eq:defGRDR}), we followed the indexing from \cite[p.10]{MP19}. Strictly speaking, in the above definition, $\DR(\cM)$ is the de Rham complex associated to the corresponding right $D$-module.}
\end{remark}
We list properties of this functor below. 
\begin{proposition} \label{prop:JakubsFavouriteFunctor}
Let $X$ be a reduced connected variety over $\bC$. In what follows, $\cM \in D^b_{\rm MHM}(X)$. Then $\GrDR \colon D^b_{\rm MHM}(X) \to D^b_{\rm coh}(X)$ satisfies the following properties:
\begin{enumerate}
\item it is stable under Tate twist, for every $k \in \bZ$:
\[
\GrDR(\cM(k)) = \GrDR(\cM),
\]
\item it commutes with $Rf_*$ for every projective map $f \colon Y \to X$:
\[
Rf_* \GrDR(\cM) = \GrDR(f_*\cM),
\]
\item it commutes with Hodge module duality $\bD_X$ on the source and the Grothendieck duality $\bD_X(-) \coloneqq R\Hom(-, \omega^\kdot_X)$ on the target:
\[
\bD_X \GrDR(\cM) = \GrDR(\bD_X \cM),
\]
\item it is right $t$-exact for the Hodge module $t$-structure on the source and the standard $t$-structure on the target:
\[
\GrDR(D^{\leq 0}) \subseteq  D^{\leq 0},
\]
\item it is left $t$-exact for the Hodge module $t$-structures on the source and the perverse $t$-structure on the target:
\[
\GrDR(D^{\geq 0}) \subseteq  {}^pD^{\geq 0},
\]
\item $\GrDR(\bQ^H_X[d]) = \underline{\Omega}^0_X[d] \oplus \underline{\Omega}^1_X[d-1] \oplus \ldots \oplus \underline{\Omega}^d_X[0]$, where $\underline{\Omega}^i_X \in D^b_{\rm coh}(X)$ is the $i$-th Deligne Du Bois complex on $X$.
\end{enumerate}
\end{proposition}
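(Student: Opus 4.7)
The plan is to collect (1), (2), (3), and (6) from Saito's foundational theory of mixed Hodge modules (summarised, e.g., in \cite{SchnellSurvey19}), and to deduce (4) and (5) from elementary arguments. For (1), I would note that Tate twist only reindexes the Hodge filtration: explicitly, $\Gr^F_j(M(k)) = \Gr^F_{j-k} M$, so the total sum $\GrDR = \bigoplus_k \Gr^F_k \DR$ is unchanged, directly from definition (\ref{eq:defGRDR}). For (2), the fact that projective pushforward commutes with $\GrDR$ is part of Saito's construction of pushforward of mixed Hodge modules, realising it as a filtered pushforward of the de Rham complex. For (3), Saito's duality theorem identifies Hodge-module duality with Grothendieck duality on the associated graded de Rham side. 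For (6), Saito's theorem identifies $\GrDR(\bQ^H_X[d])$ with $\bigoplus_i \DDB^i_X[d-i]$, proven by descending through a smooth hyperresolution $\epsilon_\kdot \colon X_\kdot \to X$ and combining (2) with the direct calculation $\GrDR(\bQ^H_Y[d]) \simeq \bigoplus_i \Omega^i_Y[d-i]$ on smooth $Y$.

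For (4), I would observe that for any single filtered $D$-module $(M, F_\kdot)$ on the smooth ambient variety $W$ of dimension $d_W$, each piece $\Gr^F_k \DR(M)$ lies in cohomological degrees $[-d_W, 0]$ by the explicit formula in (\ref{eq:defGRDR}), and the sum defining $\GrDR$ is finite because the Hodge filtration on a single Hodge module is. Hence $\GrDR$ of any single mixed Hodge module (placed in degree $0$) lies in $D^{\leq 0}$. For a general $\cM \in D^{\leq 0}_{\rm MHM}$, an induction on the cohomological amplitude via the truncation triangle $\tau^{\leq -1}\cM \to \cM \to \cH^0(\cM)[0] \xrightarrow{+1}$, combined with the exactness of $\GrDR$ as a functor of triangulated categories, extends the statement.

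For (5), I would argue purely formally: if $\cM \in D^{\geq 0}_{\rm MHM}$, then $\bD_X \cM \in D^{\leq 0}_{\rm MHM}$ by Hodge-module duality, so (4) gives $\GrDR(\bD_X \cM) \in D^{\leq 0}$. By (3) this equals $\bD_X \GrDR(\cM) \in D^{\leq 0}$, which via the duality characterisation (\ref{eq:perv-coh-dual}) of the perverse coherent $t$-structure is exactly the assertion $\GrDR(\cM) \in {}^pD^{\geq 0}$.

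The main obstacle is (6), which is the deepest input from Saito's theory: the identification of the graded pieces of the filtered de Rham complex of the trivial Hodge module with the Deligne--Du Bois complex is not a formal statement and requires Saito's inductive construction of Hodge modules via hyperresolutions together with (2). Once (6) is granted, property (4) is a short bookkeeping argument about filtered de Rham complexes and (5) is a formal duality consequence of (3) and (4); properties (1), (2), and (3) are then invoked as black-boxes from the standard references.
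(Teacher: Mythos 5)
Your proposal follows essentially the same route as the paper: parts (1), (2), (3), and (6) are invoked as black boxes from Saito's theory (the paper cites \cite{Saito00}, \cite{Saito88}, and \cite{popa2024injectivityvanishingdubois} for the precise statements), part (4) is the same observation that the explicit formula (\ref{eq:defGRDR}) places $\GrDR$ of a single mixed Hodge module in $D^{\leq 0}$ (your truncation-triangle induction just spells out the paper's ``immediately implies''), and part (5) is the identical duality argument via (3), (4), and (\ref{eq:perv-coh-dual}). The argument is correct and matches the paper's proof.
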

\begin{proof}
See \cite[Lemma 7.3]{Schnell16} for why  $\GrDR$ produces a complex supported on $X$ and why it is well defined, i.e.\ independent of the embedding $u \colon X \to W$. Part (1) is clear as the Tate twist only shifts the filtrations.

 Part (2) follows from \cite[2.3.7]{Saito00} (see \cite[Theorem 28.1]{SchnellSurvey19} for the exact statement). Here, we point out that every projective map of varieties is $H$-projective (see \cite[Tag 01W8]{stacks-project}), and so it can be easily extended to a projective map between some smooth ambient spaces.
Part (3) follows from a formula of Saito (\cite[Section 2.4]{Saito88}, see also \cite[(9.2)]{popa2024injectivityvanishingdubois} for the exact statement):
%\begin{equation} \label{eq:GRDRDuality}
\[
\bD_X \Gr_p\DR(\cM)  \cong \Gr_{-p}\DR(\bD_X \cM).
\]
%\end{equation}

Next (4) is clear. Indeed, by (\ref{eq:defGRDR}) we have that $\GrDR(\cM) \in D^{\leq 0}$ for every mixed Hodge module $\cM$ which immediately implies the claim. In turn, (5) follows from (4) and duality. Specifically, if $\cM$ is a mixed Hodge module, then
\[
\bD_X \GrDR(\cM) = \GrDR(\bD_X \cM ) \in D^{\leq 0}.
\]
Thus $\GrDR(\cM) \in {}^pD^{\geq 0}$ by (\ref{eq:perv-coh-dual}). 

Finally, (6) is a consequence of \cite[Theorem 4.2]{Saito00} (see \cite[p.\ 20]{popa2024injectivityvanishingdubois} for the exact statement).
\end{proof}

\begin{remark} \label{remark:grdr-graded}
The $\Gr_\kdot \DR$ functor extends to
\[
\Gr_\kdot \DR \colon D^b_{\rm MHM}(X) \to D^b_{\rm coh, {\rm gr}}(X)
\]
where $D^b_{\rm coh, {\rm gr}}(X)$ denotes the \emph{graded} bounded derived category of coherent sheaves on $X$. Parts (1), (2), (3), and (6) admit more specialised formulas:
\begin{align}
\Gr_\kdot \DR(\cM(k)) &= \Gr_{\kdot -k}\DR(\cM), \label{eq:GRDRTwist} \\
\Gr_{\kdot}\DR(f_*\cM) &\cong Rf_*\Gr_{\kdot}\DR(\cM), \\
\bD_X \Gr_\kdot\DR(\cM) &\cong \Gr_{-\kdot}\DR(\bD_X \cM), \text{ and }  \label{eq:GRDRDual} \\
\Gr_{-i} \DR(\bQ^H_X[d]) &\cong
\DDB^{i}_X[d-i]. \label{eq:GRDRTrivial}
\end{align}
In \eqref{eq:GRDRTrivial}, the indexing of the graded pieces may look unnatural to the reader at first, but this convention agrees with the calculation in (\ref{eq:defGRDR}). As it will be used many times in this article, let us point out that \eqref{eq:GRDRTrivial} immediately implies that:
\begin{equation} \label{eq:GRDRTrivial2}
\Gr_{-(d-i)} \DR(\bQ^H_X[d]) \cong
\DDB^{d-i}_X[i].
\end{equation}

% To help the reader with indices of gradation, let us conclude this remark by the following example. Assume $X$ is smooth and consider the isomorphism (\ref{eq:dual-trivial-HM}):
% \[
% \bD_X(\bQ^H_X[d]) \cong \bQ^H_X[d](d).
% \]
% By applying $\Gr_\kdot\DR$ we get that for every $i \in \bZ$:
% \[
% \Gr_{d-i}\DR(\bD_X(\bQ^H_X[d])) \cong \Gr_{d-i}\DR(\bQ^H_X[d](d)).
% \]
% The right hand side agrees with $\Gr_{-i}\DR(\bQ^H_X[d])$ by definition of Tate's twist,t which, in turn, equals $\Omega^i_X[d-i]$ by (\ref{eq:GRDRTrivial}) or Remark \ref{rem:grdr-in-action}. The left hand side
% \[
% \Gr_{d-i}\DR(\bD_X(\bQ^H_X[d])) = \bD_X(\Gr_{i-d}\DR(\bQ^H_X[d])) = \bD_X(\Omega^{d-i}_X[i]),
% \]
% thanks to (\ref{eq:GRDRDual}). Hence we get that
% \[
% R\Hom(\Omega^{d-i}_X[i], \omega^\kdot_X) \cong \Omega^i_X[d-i]
% \]
% which of course is also true by basic duality.
\end{remark}

% \begin{remark} \label{rem:basicDDB}
% In the above result, $\underline{\Omega}^i_X$ is the derived h-sheafification of the sheaf $\Omega^i$. We have that
% \[
% \Omega^i_{X,h} = \cH^0(\underline{\Omega}^i_X)
% \]
% is the (non-derived) h-sheafification of $\Omega^i$. We often write $\underline{\cO}_X \coloneqq \underline{\Omega}^0_X$. Also recall that $\DDB^d_X = f_*\omega_Y$ is the Grauert-Riemenschneider sheaf, where $d = \dim X$ and $f \colon Y \to X$ is any resolution of singularities (add an argument or a reference).
% \end{remark}

\begin{remark} \label{remark:jpush}
Let $X$ be a normal connected $d$-dimensional variety over $\bC$, let $f \colon Y \to X$ be a log resolution of singularities with exceptional divisor $E$, let $D$ be a reduced simple normal crossing divisor on $Y$ containing $E$, and let $j \colon U \hookrightarrow X$ be an inclusion of the smooth open subset $U \coloneqq X \, \backslash\, f(D)$. Let $j' \colon U \to Y$ be the natural inclusion of $U$ in $Y$. Then
\begin{equation} \label{eq:grdrjpush}
\Gr_{-i}\DR(j_*\bQ^H_U[d]) \cong Rf_*\Omega^i_Y(\log D)[d-i].
\end{equation}
Indeed,
\begin{align*}
\Gr_\kdot \DR(j_*\bQ^H_U[d]) &\cong  \Gr_\kdot\DR(f_*j'_*\bQ^H_U[d])\\
&\cong Rf_*\Gr_\kdot\DR(j'_*\bQ^H_U[d])   \\
&\cong \bigoplus^d_{i=0} Rf_*\Omega^i_Y(\log D)[d-i],
\end{align*}
where the first isomorphism is definitional, the second follows from Proposition \ref{prop:JakubsFavouriteFunctor}(2), and the last one is \cite[Theorem 1]{Saito07} (see also \cite[Theorem 6.1]{MP19}). By looking at the gradations it is easy to deduce the required isomorphism.
\end{remark}

It is natural to ask if we can sometimes get full $t$-exactness in Proposition \ref{prop:JakubsFavouriteFunctor}(4)(5). This is indeed the case for the lowest and the highest pieces of the Hodge filtration, namely we have the following result which we learned from \cite[Theorem 10.5]{popa2024injectivityvanishingdubois} and \cite{Park24}.

\begin{lemma} \label{lem:grdr-boundary}
Let $X$ be a reduced connected variety over $\bC$. Let $\cM \in D^b_{\rm MHM}(X)$ and let $k \in \bZ$. Then the following statements hold.
\begin{enumerate}
    \item If $\cM \in D^{\geq 0}_{\rm MHM}(X)$ and $\Gr_{i}\DR(\cM)=0$ for $i < k$, then
    \[
    \Gr_k\DR(\cM) \in D^{\geq 0}_{\rm coh}(X).
    \]
    \item If $\cM \in D^{\leq 0}_{\rm MHM}(X)$ and $\Gr_{i}\DR(\cM)=0$ for $i > k$, then
    \[
    \Gr_k\DR(\cM) \in {}^pD^{\leq 0}_{\rm coh}(X).
    \]
\end{enumerate}
\end{lemma}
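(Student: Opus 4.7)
Parts (1) and (2) are exchanged under Grothendieck--Verdier duality. Setting $\cN \coloneq \bD_X \cM$, the graded duality formula \eqref{eq:GRDRDual} gives $\bD_X \Gr_k \DR(\cM) \cong \Gr_{-k} \DR(\cN)$. Since Hodge module duality swaps $D^{\leq 0}_{\rm MHM}$ with $D^{\geq 0}_{\rm MHM}$ while \eqref{eq:perv-coh-dual} swaps $D^{\geq 0}_{\rm coh}$ with ${}^pD^{\leq 0}_{\rm coh}$, an instance of (1) for $\cM$ at level $k$ is equivalent to an instance of (2) for $\cN$ at level $-k$. Hence I focus on proving (1).

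\textbf{Base case via D-module analysis.} Proposition \ref{prop:JakubsFavouriteFunctor}(5) already gives $\Gr_k\DR(\cM) \in {}^pD^{\geq 0}_{\rm coh}$, so the task is to upgrade to the standard $D^{\geq 0}_{\rm coh}$. For the base case of a single Hodge module $\cM \in \mathrm{MHM}(X)$, represent $\cM$ by a filtered $\cD$-module $(M, F_\bullet)$ on an ambient smooth $W \supseteq X$ via a closed embedding $u \colon X \hookrightarrow W$. Then
\[
\Gr_k\DR(\cM) = [\Gr_k M \to \Gr_{k+1}M \otimes \Omega^1_W \to \cdots \to \Gr_{k+d_W} M \otimes \Omega^{d_W}_W]
\]
sits in cohomological degrees $[-d_W, 0]$. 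Since the Hodge filtration is bounded below, letting $p_{\min}$ be the smallest integer with $\Gr_{p_{\min}} M \neq 0$, the smallest index $k$ with $\Gr_k\DR(\cM) \neq 0$ is precisely $p_{\min} - d_W$, at which the complex collapses to its rightmost term $\Gr_{p_{\min}}M \otimes \Omega^{d_W}_W$ placed in cohomological degree $0$. Under the vanishing hypothesis $\Gr_i\DR(\cM) = 0$ for $i < k$, the graded piece $\Gr_k\DR(\cM)$ is therefore either zero or a coherent sheaf in degree $0$, and in both cases lies in $D^{\geq 0}_{\rm coh}$.

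\textbf{Induction and main obstacle.} For a general $\cM \in D^b_{\rm MHM}(X)$, I would proceed by induction on the cohomological amplitude $n$ of $\cM \in D^{[0, n]}_{\rm MHM}$, via the truncation triangle $\cH^0\cM \to \cM \to \tau^{\geq 1}\cM \xrightarrow{+1}$. Applying $\Gr_k\DR$ produces a triangle in $D^b_{\rm coh}(X)$ whose first term falls under the base case (once the vanishing is transferred to $\cH^0\cM$) and whose last term would be controlled by the inductive hypothesis applied to $\tau^{\geq 1}\cM[-1] \in D^{[0, n-1]}_{\rm MHM}$. The main obstacle is exactly this vanishing transfer: the triangle only yields $\Gr_i\DR(\tau^{\geq 1}\cM) \cong \Gr_i\DR(\cH^0\cM)[1]$ for $i < k$, which need not vanish on either side. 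Resolving this requires further input, most naturally the sharpened $t$-exactness of $\GrDR$ at extremal graded pieces captured by \cite[Theorem 10.5]{popa2024injectivityvanishingdubois} and \cite{Park24}, which rests on Saito's strictness of the Hodge filtration for single Hodge modules.
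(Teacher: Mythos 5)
Your duality reduction of (2) to (1) is exactly what the paper does, and your analysis of a single Hodge module is correct: the vanishing $\Gr_i\DR(\cM)=0$ for $i<k$ forces $\Gr_k\DR(\cM)$ to be either zero or the lowest filtered piece $\Gr_{k+d_W}M\otimes\Omega^{d_W}_W$ sitting in degree $0$. But the lemma is a statement about arbitrary objects of $D^b_{\rm MHM}(X)$, and your inductive step over the cohomological amplitude does not close — as you yourself observe, the truncation triangle gives no way to transfer the hypothesis $\Gr_{<k}\DR(\cM)=0$ to $\cH^0\cM$ and $\tau^{\geq 1}\cM$ separately. Deferring at that point to \cite[Theorem 10.5]{popa2024injectivityvanishingdubois} and \cite{Park24} is not a proof: those references are precisely where this lemma comes from, so invoking them amounts to assuming the statement. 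This is a genuine gap.

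The missing idea is to avoid induction on amplitude altogether and run the argument on the underlying filtered complex $(M^\kdot,F)\in D^b_{\rm fil}(\cD)$, using the formula for $\Gr_k\DR$ of a complex (the total complex of \eqref{eq:defGRDRFancy}). The "vanishing transfer" is then done at the level of the filtration rather than of Hodge-module cohomology: $\Gr_{<k}\DR(\cM)=0$ forces $F_{<k+d_W}M^\kdot=0$ (this is \cite[Lemma 9.6]{popa2024injectivityvanishingdubois}, or can be traced directly through the total complex), whence
\[
\Gr_k\DR(\cM)\;\cong\;F_{k+d_W}M^\kdot\otimes^L_{\cO_W}\Omega^{d_W}_W .
\]
Now strictness of the Hodge filtration — the tool you correctly name but only apply to single modules — holds for the whole complex, giving $\cH^i(F_{k+d_W}M^\kdot)=F_{k+d_W}\cH^i(M^\kdot)$, which vanishes for $i<0$ because $\cM\in D^{\geq 0}_{\rm MHM}(X)$. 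Hence $\Gr_k\DR(\cM)\in D^{\geq 0}_{\rm coh}(X)$ with no induction needed; part (2) then follows by your duality reduction, exactly as in the paper.
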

\noindent In particular, if $\cM$ is a mixed Hodge module, then the lowest non-zero $\Gr_k\DR(\cM)$ is a coherent sheaf, and the highest non-zero $\Gr_k\DR(\cM)$ is a perverse coherent sheaf (see Proposition \ref{prop:JakubsFavouriteFunctor}(4)(5)). 
\begin{proof}
 We shall use the following formula that extends (\ref{eq:defGRDR}) to $D^b_{\rm MHM}(X)$ (see \cite[line below Lemma 9.6]{popa2024injectivityvanishingdubois}):
{\small \begin{equation} \label{eq:defGRDRFancy}
\Gr^F_k \DR(\cM) \coloneqq  \rm Tot\left(\Gr^F_k M^\kdot \to \Gr^F_{k+1} M^\kdot \otimes^L_{\cO_W} \Omega^1_W \to \cdots \to  \Gr^F_{k+{d_W}} M^\kdot \otimes^L_{\cO_W} \Omega_W^{d_W}\right).
\end{equation}}
\!\!Here $(M^\kdot, F) \in D^b_{\rm fil}(\cD_X)$ is the underlying object of the derived category of filtered D-modules. In (\ref{eq:defGRDRFancy}), the right hand side denotes the total complex of the double complex with vertical maps induced by the differential of a representative of $M^\kdot$ and  horizontal maps induced by the usual de Rham maps on each term of that representative. 

Let us start with (1). Since $\Gr_{<k}\DR(\cM)=0$, we get that:
\[
F_{<k + d_W} M^\kdot  = 0;
\]
see \cite[Lemma 9.6]{popa2024injectivityvanishingdubois} up to a shift from left to right $D$-modules 
or equivalently trace through  \eqref{eq:defGRDRFancy}. Therefore, by \eqref{eq:defGRDRFancy}: 
\[
\Gr_k\DR(\cM) = F_{k+d_W}M^\kdot  \otimes^L_{\cO_W} \Omega^{d_W}_W.
\]
Strictness of Hodge filtration yields $\cH^i(F_{k+d_W}M^\kdot) = F_{k+d_W}\cH^i(M^\kdot) =0$ for $i<0$, so
\[
\Gr_k\DR(\cM) \in D^{\geq 0}
\]
as required. 

Next, we infer (2) from (1) by duality. With the assumptions of (2) and using (\ref{eq:GRDRDual}), we get
\[
0 = \bD_X(\Gr_i\DR(\cM)) = \Gr_{-i}\DR(\bD_X(\cM))
\]
for all $i > k$. Since $\bD_X(\cM) \in D^{\geq 0}$ (see (\ref{eq:cons-self-dual}) and Subsection \ref{ss:HM}(1)(2)), we can apply (1) to get:
\[
\Gr_{-k}\DR(\bD_X(\cM)) \in D^{\geq 0}.
\]
But the left hand side is equal to $\bD_X(\Gr_k(\DR(\cM)))$ by (\ref{eq:GRDRDual}) again, and so
\[
\Gr_k\DR(\cM) \in {}^pD^{\leq 0}
\]
by (\ref{eq:perv-coh-dual}). This concludes the proof.
\end{proof}

\begin{remark} \label{remark:BL}
Fix an isomorphism between $\bC$ and some perfectoid field. The theory of $p$-adic Riemann-Hilbert correspondence (\cite{BhattLuriepadicRHmodp}, see \cite[Section 3.2]{BhattAbsoluteIntegralClosure} and \cite[Theorem 4.2]{BMPSTWW2}) provides us with a functor:
\[
{\rm RH}^{\rm Higgs} \colon D^b_{\rm cons}(X,\bQ_p) \to D^b_{\rm coh, {\rm gr}}(X)
\]
that satisfies all the properties of the $\Gr_{\kdot}\DR$ functor from Proposition \ref{prop:JakubsFavouriteFunctor} and Remark \ref{remark:grdr-graded}. Since the $\Gr_\kdot \DR$ functor is the main tool in this article that requires Hodge module theory, we could have replaced most of the uses of Hodge modules with $\bQ_p$-constructible sheaves. This reinforces the point made earlier, that from the viewpoint of our article, Hodge modules may be thought of as perverse $\bQ$-constructible sheaves with \emph{some additional data}.

Informally speaking, the reason that the ${\rm RH}^{\rm Higgs}$ functor has only constructible sheaves as its input is that the data of additional filtrations inherent to Hodge modules is implicitly replaced by the action of the Galois group on $\bQ_p$-sheaves.
\end{remark}

\subsection{Intersection and dual Deligne--Du Bois complexes}
Recall that the duality functor on $D^b_{\rm coh}(X)$ is defined as follows $\bD_X(-) = R\Hom_{\cO_X}(-, \omega^\kdot_X)$.
Following \cite{PP24}, we introduce the following definition.
\begin{definition}[{\cite[Definition 3.3]{PP24}}] \label{def:io}
Let $X$ be a normal connected variety of dimension $d$ defined over $\bC$. We define the \emph{$i$-th intersection Du Bois complex} $\IO^i_X$ so that
\[
\Gr \DR(\IC^H_X) = \IO^0_X[d] \oplus \IO^1_X[d-1] \oplus \cdots \oplus \IO^d_X[0].
\]
Specifically, $\IO^i_X \coloneqq \Gr_{-i}\DR(\IC^H_X)[-d+i]$. 
\end{definition}

Let us recall the key properties of $\IO^i_X$ from \cite{KS21} (see also \cite{Park24}).
\begin{proposition}[{\cite[Section 8]{KS21}, \cite[Lemma 3.4]{PP24}}] \label{prop:all-properties-of-IO}
Let $X$ be a normal connected variety of dimension $d$ defined over $\bC$. Let $f \colon Y \to X$ be a resolution of singularities. Then  for every $0 \leq i \leq d$:
\begin{enumerate}
    \item $\IO^0_X = Rf_*\cO_Y$,
    \item $\IO^d_X = f_*\omega_Y$,
    \item $\cH^0(\IO^i_X) = f_*\Omega^i_Y$,
    \item $\IO^i_X$ is a direct summand of $Rf_*\Omega^i_Y$, and
    \item $\bD_X(\IO^i_X[d-i]) = \IO^{d-i}_X[i]$.
\end{enumerate}
\end{proposition}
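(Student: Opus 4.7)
The plan is to deduce all five statements simultaneously from the decomposition theorem for $f_{*}\bQ^{H}_{Y}[d]$, combined with the formal properties of the $\GrDR$ functor collected in Proposition~\ref{prop:JakubsFavouriteFunctor}, Remark~\ref{remark:grdr-graded}, and Lemma~\ref{lem:grdr-boundary}. Since $f$ is projective and $\bQ^{H}_{Y}[d]$ is pure of weight $d$, Saito's decomposition theorem yields an isomorphism
\[
f_{*}\bQ^{H}_{Y}[d]\;\cong\;\IC^{H}_{X}\oplus \cM
\]
in $D^{b}_{\mathrm{MHM}}(X)$, where $\cM$ is a direct sum of shifts of simple pure Hodge modules of the form $\IC^{H}_{Z}(L)$, each supported on a closed subvariety $Z\subsetneq X$ contained in $f(E)$ and hence satisfying $\dim Z<d$. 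Applying $\Gr_{-i}\DR$, using Proposition~\ref{prop:JakubsFavouriteFunctor}(2) together with the identity $\Gr_{-i}\DR(\bQ^{H}_{Y}[d])=\Omega^{i}_{Y}[d-i]$ from \eqref{eq:GRDRTrivial}, I obtain
\[
Rf_{*}\Omega^{i}_{Y}[d-i]\;\cong\;\IO^{i}_{X}[d-i]\oplus \Gr_{-i}\DR(\cM).
\]
A shift already gives (4). For (5), the self-duality $\bD_{X}(\IC^{H}_{X})\cong \IC^{H}_{X}(d)$ from \eqref{eq:dual-IC} together with \eqref{eq:GRDRTwist} and \eqref{eq:GRDRDual} immediately yields $\bD_{X}(\IO^{i}_{X}[d-i])\cong \IO^{d-i}_{X}[i]$.

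Next I would prove (2) by applying $\Gr_{-d}\DR$ to the decomposition: the left-hand side becomes $Rf_{*}\omega_{Y}=f_{*}\omega_{Y}$ by Grauert--Riemenschneider vanishing, so the statement reduces to $\Gr_{-d}\DR(\cM)=0$. Statement (1) then follows from (2) via duality: by (5), $\bD_{X}(\IO^{d}_{X})\cong \IO^{0}_{X}[d]$, and on the other hand Grothendieck duality for the proper morphism $f$ gives
\[
\bD_{X}(f_{*}\omega_{Y})=\bD_{X}(Rf_{*}\omega_{Y})=Rf_{*}\bD_{Y}(\omega_{Y})=Rf_{*}\cO_{Y}[d],
\]
using that $\bD_{Y}(\omega_{Y})=\cO_{Y}[d]$ on the smooth variety $Y$. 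For (3), taking $\cH^{0}$ of the decomposition exhibits $\cH^{0}(\IO^{i}_{X})$ as a direct summand of $f_{*}\Omega^{i}_{Y}=\cH^{0}(Rf_{*}\Omega^{i}_{Y})$, so the claim reduces to the vanishing of the complementary term $\cH^{d-i}(\Gr_{-i}\DR(\cM))$.

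The main obstacle is the pair of vanishing statements $\Gr_{-d}\DR(\cM)=0$ and $\cH^{d-i}(\Gr_{-i}\DR(\cM))=0$ needed for (2) and (3). These rely on the following support-and-weight estimate: for each simple summand $\IC^{H}_{Z}(L)$ of $\cM$, the complex $\Gr_{\kdot}\DR(\IC^{H}_{Z}(L))$ is supported on $Z$ and the range of non-trivial gradations is controlled by the weight of $L$, which in turn is pinned down by purity (each summand of $\cH^{j}(f_{*}\bQ^{H}_{Y}[d])$ is pure of weight $d+j$). Combining this with Lemma~\ref{lem:grdr-boundary} applied to the extremal gradations of each pure summand rules out any contribution of $\cM$ to $\Gr_{-d}\DR$ or to the top cohomological degree of $\Gr_{-i}\DR$. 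Once this bookkeeping is in hand, all five properties fall out formally from the decomposition theorem, the duality identities, and Grauert--Riemenschneider.
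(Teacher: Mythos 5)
Most of your argument coincides with the paper's proof: the decomposition $f_*\bQ^H_Y[d] \cong \IC^H_X \oplus \cM$ with $\cM$ supported on a proper closed subset, applying $\Gr_{-i}\DR$ to get (4), deducing (5) from $\bD_X(\IC^H_X)\cong \IC^H_X(d)$, and obtaining (1) from (2) via Grothendieck duality are exactly the steps in the paper. The gap is in the two vanishing statements you defer to the end, namely $\Gr_{-d}\DR(\cM)=0$ and the vanishing of the degree-zero cohomology of the complementary summand needed for (3). Your proposed mechanism --- that purity pins down the weights of the simple summands and that ``the range of non-trivial gradations is controlled by the weight'' --- is not correct as stated: the range of $k$ with $\Gr_k\DR \neq 0$ is governed by the Hodge filtration of the summand (its Hodge numbers after twisting), which is not determined by its weight, and Lemma \ref{lem:grdr-boundary} only tells you that the extremal graded pieces are a coherent sheaf, resp.\ a perverse coherent sheaf; neither of these rules out a contribution of $\cM$ in gradation $-d$ or in cohomological degree $0$. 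So the ``bookkeeping'' you invoke is precisely the part that is missing, and it would not go through by weight considerations alone.

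The paper closes this gap with a much simpler observation that you do not use: $f$ is birational, so $f_*\Omega^i_Y$ is torsion-free, while $\Gr_{-i}\DR(\cM)$ is supported on a proper closed subset; a direct summand of a torsion-free sheaf supported on a proper closed subset is a torsion subsheaf, hence zero. Concretely, taking $\cH^0$ of $Rf_*\Omega^i_Y \cong \IO^i_X \oplus \Gr_{-i}\DR(\cM)[i-d]$ (note your exponent should be $\cH^{i-d}$ of $\Gr_{-i}\DR(\cM)$, not $\cH^{d-i}$) gives (3) immediately by this torsion argument. For (2) the paper does not need $\Gr_{-d}\DR(\cM)=0$ at all: by Grauert--Riemenschneider $Rf_*\omega_Y = f_*\omega_Y$ is a sheaf, so its direct summand $\IO^d_X$ is concentrated in degree $0$, and then (3) forces $\IO^d_X = \cH^0(\IO^d_X) = f_*\omega_Y$. (Your vanishing $\Gr_{-d}\DR(\cM)=0$ is in fact true, but the honest proof is again the torsion-freeness/support argument, not weight bookkeeping.) With that replacement your outline becomes the paper's proof.
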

\noindent For the convenience of the reader, we sketch the argument below.
\begin{proof}
We start with (5). By applying $\Gr_\kdot \DR$ to (\ref{eq:dual-IC}), we get
\[
\Gr_{i}\DR(\bD_X(\IC^H_X)) \cong \Gr_{i}\DR(\IC^H_X(d)).
\]
The right hand side is equal to $\Gr_{-d+i}\DR(\IC^H_X) = \IO^{d-i}_X[i]$. The left hand side equals
\[
\bD_X(\Gr_{-i}\DR(\IC^H_X)) = \bD_X(\IO^i_X[d-i])
\]
by \eqref{eq:GRDRDual}. This concludes the proof of (5).
%By is automatic as $\IC^H_X$ is self-dual and $\bD_X$ commutes with $\IC^H_X$.

Next, by Saito's decomposition theorem \cite[Theorem 15.1 and Theorem 16.1] {Schnell16} (generalising \cite{BBDG18}), we have that:
\[
f_*\bQ^H_Y[d] = \IC^H_X \oplus N,
\]
where $N$ is a Hodge module supported on a strictly closed subvariety $Z \subsetneq X$. Thus
\begin{align*}
\Gr_\kdot\DR(\IC^H_X) \oplus \Gr_\kdot\DR(N) &= \Gr_\kdot\DR(f_*\bQ^H_Y[d]) \\
&= Rf_*\Gr_\kdot\DR(\bQ^H_Y[d]) \\
&= \bigoplus^d_{i=0} Rf_*\Omega^i_Y[d-i],
\end{align*}
by Proposition \ref{prop:JakubsFavouriteFunctor}(2)(6) (see also Remark \ref{remark:grdr-graded}). Therefore
\[
Rf_*\Omega^i_Y = \IO^i_X \oplus \Gr_{-i}\DR(N)[i-d],
\]
which shows (4). Moreover, since $f_*\Omega^i_Y$ is torsion-free and $\Gr_{-i}\DR(N)$ is supported on $Z$, applying $\cH^0$ yields:
\[
f_*\Omega^i_Y = \cH^0(\IO^i_X)
\]
concluding (3).

By Grauert-Riemenschneider vanishing $Rf_*\omega_Y = f_*\omega_Y$, and so (4) implies that $\IO^d_X$ is a sheaf supported in degree $0$. Hence by (3), $\IO^d_X = \pi_*\omega_Y$. Finally (1) follows automatically from (2) and (5):
\[
\IO^0_X[d] \cong \bD_X(\IO^d_X) \cong \bD_X(Rf_*\omega_Y) \cong Rf_*\cO_Y[d]. \qedhere
\]
\end{proof}

In addition to $\DDB^i_X$ and $\IO^i_X$ we will also study the dual Deligne--Du Bois complex.
\begin{definition} \label{def:do}
{For a normal connected variety $X$,} define
\[
\DO^i_X \coloneqq \bD_X(\DDB^{d-i}_X[d]) \in D^b_{\rm coh}(X).
\]
\end{definition}
\noindent When $X$ is smooth, $\DO^i_X \cong \Omega^i_X$.\\

Note that $\DO^i_X[d-i] = \bD_X(\DDB^{d-i}_X[i])$. Hence, since $\GrDR$ commutes with duality (Proposition \ref{prop:JakubsFavouriteFunctor}(3)), we get:
\begin{align*}
\GrDR(\bD_X(\bQ^H_X[d])) &\cong \bD_X(\DDB^0_X[d] \oplus \DDB^1_X[d-1] \oplus \cdots \oplus \DDB^d_X[0]) \\
&\cong \DO^0_X[d] \oplus \DO^1_X[d-1] \oplus \cdots \oplus \DO^d_X[0].
\end{align*}

Whenever $i < {\rm codim}_X({\rm Sing}(X))$, we can calculate $\DO^i_X$ explicitly. To this end, we first establish the following lemma which includes the study of non-normal varieties. Recall that the dimension of a variety $Z$ is the maximum of the dimensions of $\cO_{Z,z}$ for every closed point $z \in Z$.
\begin{lemma} \label{lemma:DOnon-normal}
    Let $Z$ be a reduced variety of dimension $d$ defined over $\bC$. Then 
    \[
    \bD_Z(\underline{\Omega}^{i}_Z)\in D^{\geq -d}_{\rm coh}(Z)
    \]
    for every integer $i \geq 0$.
\end{lemma}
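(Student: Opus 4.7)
The plan is to identify $\bD_Z(\underline{\Omega}^i_Z)$ explicitly via a smooth hyperresolution and Grothendieck duality, whereupon the cohomological amplitude estimate becomes transparent.

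First, I fix a smooth cubical hyperresolution $\epsilon_\kdot \colon Z_\kdot \to Z$ in the sense of Guillén--Navarro--Pascual--Puerta; each $Z_\alpha$ is smooth and proper over $Z$, with $d_\alpha := \dim Z_\alpha \leq d$. By the very definition of the Deligne--Du Bois complex,
\[
\underline{\Omega}^i_Z \cong R\epsilon_{\kdot,*}\Omega^i_{Z_\kdot}.
\]

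Next, I apply Grothendieck duality term-by-term on this hyperresolution. Each $\epsilon_\alpha$ is proper with $\epsilon_\alpha^!\omega^\kdot_Z \cong \omega^\kdot_{Z_\alpha} = \omega_{Z_\alpha}[d_\alpha]$, and on the smooth $Z_\alpha$ the wedge-product pairing gives $R\cHom_{\cO_{Z_\alpha}}(\Omega^i_{Z_\alpha}, \omega_{Z_\alpha}) \cong \Omega^{d_\alpha - i}_{Z_\alpha}$. Combining these piecewise identifications, compatibly with the cubical face maps, yields
\[
\bD_Z(\underline{\Omega}^i_Z) \cong R\epsilon_{\kdot, *}\bigl(\Omega^{d_\kdot - i}_{Z_\kdot}[d_\kdot]\bigr),
\]
with the convention that on each $Z_\alpha$ the complex restricts to $\Omega^{d_\alpha - i}_{Z_\alpha}[d_\alpha]$.

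To conclude, I estimate cohomological amplitudes. For each $\alpha$, $\Omega^{d_\alpha - i}_{Z_\alpha}[d_\alpha]$ is a locally free sheaf concentrated in cohomological degree $-d_\alpha$, hence lies in $D^{\geq -d_\alpha} \subseteq D^{\geq -d}$. Since $R\epsilon_{\alpha,*}$ is left $t$-exact for the standard $t$-structure, the pushforward $R\epsilon_{\alpha,*}(\Omega^{d_\alpha - i}_{Z_\alpha}[d_\alpha])$ remains in $D^{\geq -d}$. The cubical totalization of a finite diagram of objects in $D^{\geq -d}$ itself stays in $D^{\geq -d}$: after choosing representatives concentrated in degrees $\geq -d$, the associated double complex has nonnegative cubical degree and inner cohomological degree $\geq -d$, so the total degree is bounded below by $-d$. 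Hence $\bD_Z(\underline{\Omega}^i_Z) \in D^{\geq -d}_{\rm coh}(Z)$.

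The main hurdle is the careful justification of the duality identification on the cubical hyperresolution — namely verifying $\bD_Z \circ R\epsilon_{\kdot,*} \cong R\epsilon_{\kdot,*} \circ \bD_{Z_\kdot}$ by stitching together term-wise Grothendieck duality (valid for each proper $\epsilon_\alpha$) with compatibility along the cubical face maps. I note that the Hodge-theoretic route through Proposition \ref{prop:JakubsFavouriteFunctor}(3)(5) only delivers the weaker bound $\bD_Z(\underline{\Omega}^i_Z) \in D^{\geq i-2d}$ (since ${}^pD^{\geq 0}_{\rm coh} \subseteq D^{\geq -d}$ together with a shift loses $d-i$ degrees), so the hyperresolution description is genuinely needed. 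Once the duality identification is in hand, the cohomological-degree estimate is immediate.
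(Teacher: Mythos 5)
Your idea---compute $\bD_Z(\DDB^i_Z)$ by termwise Grothendieck duality on a smooth hyperresolution---is genuinely different from the paper's argument, but as written the final degree estimate has a real gap. The complex $\DDB^i_Z \cong R\epsilon_{\kdot,*}\Omega^i_{Z_\kdot}$ is a totalization in which the piece $R\epsilon_{\alpha,*}\Omega^i_{Z_\alpha}$ sits in cubical column degree $|\alpha|-1\geq 0$. Dualization is contravariant: it reverses the face maps and flips these column degrees to $-(|\alpha|-1)\leq 0$, so $\bD_Z(\DDB^i_Z)$ is \emph{not} of the form ``$R\epsilon_{\kdot,*}$ of the cubical sheaf $\Omega^{d_\kdot-i}_{Z_\kdot}[d_\kdot]$'' in the same sense; it is the totalization of the duals with \emph{nonpositive} cubical degrees, and the piece indexed by $\alpha$ contributes in total degrees $\geq -d_\alpha-(|\alpha|-1)$, not $\geq -d_\alpha$. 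Your assertion that ``the associated double complex has nonnegative cubical degree'' is therefore false for the dualized object, and with only the bound $d_\alpha\leq d$ the total degree can drop below $-d$. (Already for a nodal curve the deepest cubical stratum sits in column $-1$; the bound $\geq -1$ survives only because that stratum is $0$-dimensional.)

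The argument can be repaired, but only by using the dimension control built into the GNPP existence theorem (see also Peters--Steenbrink): one may \emph{choose} a cubical hyperresolution with $\dim Z_\alpha \leq d-|\alpha|+1$ for all $\alpha$, and then each piece contributes in degrees $\geq -(d-|\alpha|+1)-(|\alpha|-1)=-d$, which gives the claim (using that $\DDB^i_Z$ is independent of the choice). As written you fix an arbitrary hyperresolution and never relate $\dim Z_\alpha$ to $|\alpha|$, so the estimate does not follow. For comparison, the paper avoids hyperresolutions entirely: it runs a double induction on dimension, using the conductor square of the normalisation to pass from the normal case to the reduced case, and the triangle $Rf_*\Omega^i_Y(\log E)(-E)\to \DDB^i_X\to \DDB^i_Z\xrightarrow{+1}$ coming from a log resolution to handle the normal case; once your degree bookkeeping is corrected and the dimension bound is invoked, your route is a legitimate alternative.
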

\begin{proof}
    We consider the following statements:
    \begin{enumerate}
        \item[$(N_d)$:] $\bD_X(\underline{\Omega}^{i}_X)\in D^{\geq -\dim X}_{\rm coh}(X)$ for all normal connected varieties $X$ with $\dim X\leq d$.
        \item[$(V_d)$:] $\bD_Z(\underline{\Omega}^{i}_Z)\in D^{\geq -\dim Z}_{\rm coh}(Z)$ for all {reduced} varieties $Z$ with $\dim Z\leq d$.
    \end{enumerate}
    The lemma follows from the following claim.
    \begin{claim*}
        Assume that the statements $(N_{d-1})$ and $(V_{d-2})$ hold. Then $(N_{d})$ and $(V_{d-1})$ hold as well.
    \end{claim*}
         We first prove that $(V_{d-1})$ holds. Let $Z$ be a {reduced} variety of dimension at most $d-1$.
        Now, since $\DDB^i_Z$ is a (derived) $h$-sheaf, we can consider the following blow-up square and the induced homotopy pullback square:
\[
\begin{tikzcd}
\cD \ar{r} \ar{d} & Z^{\nu} \ar{d}{\nu} \\
\cC \ar{r} & Z.
\end{tikzcd}
\hspace{4em}
\begin{tikzcd}
\nu_*\DDB^{i}_{\cD}   & \ar{l} \nu_*\DDB^{i}_{Z^{\nu}}  \\
\DDB^{i}_{\cC}  \ar{u} & \ar{l}  \ar{u}[swap]{\nu^*} \DDB^{i}_Z.
\end{tikzcd}
\]
where $\nu \colon Z^{\nu} \to Z$ is the normalisation and $\cC$, $\cD$ are conductors (cf. \cite[Section 2.6]{W24Rel}). Note that $\dim \cC$ and $\dim \cD$ are at most $d-2$, and $\dim Z^{\nu}=d-1$. We have an exact triangle
\[
\DDB^{i}_Z \to \DDB^{i}_C \oplus \nu_*\DDB^{i}_{Z^{\nu}} \to \nu_*\DDB^{i}_D \xrightarrow{+1}
\]
and its dual:
\begin{equation*} 
\bD_Z(\nu_*\DDB^{i}_D) \to \bD_Z(\DDB^{i}_C) \oplus \bD_Z(\nu_*\DDB^{i}_{Z^{\nu}}) \to \bD_Z(\DDB^{i}_Z) \xrightarrow{+1}
\end{equation*} 
By Grothendieck duality, this triangle is equivalent to:
\begin{equation} \label{eq:dualetr1}
\bD_D(\DDB^{i}_D) \to \bD_C(\DDB^{i}_C) \oplus \bD_{Z^{\nu}}(\DDB^{i}_{Z^{\nu}}) \to \bD_Z(\DDB^{i}_Z) \xrightarrow{+1}
\end{equation} 
Then 
\begin{alignat*}{3}
&\bD_{Z^\nu}(\DDB^{i}_{Z^{\nu}})  \in D^{\geq -\dim Z} &&\text{by Assumption $(N_{d-1})$, and}\\
&\bD_{\cC}(\DDB^i_{\cC})\in D^{\geq -\dim \cC}\, \text{ and }\, \bD_{\cD}(\DDB^{i}_{\cD}) \in D^{\geq -\dim \cD} \quad &&\text{by Assumption $(V_{d-2})$}.
\end{alignat*}
Thus, we get $\bD_Z(\DDB^{i}_Z) \in D^{\geq -\dim Z}$  by \eqref{eq:dualetr1}, and so $(V_{d-1})$ holds.

Next, we prove that $(N_d)$ holds. Let $X$ be a normal connected variety of $\dim X\leq d$.
Let $f\colon Y\to X$ be a log resolution and $Z\coloneqq f(E)$. Then we have the following blow-up square and the induced homotopy pullback square:
\[
\begin{tikzcd}
E \ar{r} \ar{d} & Y \ar{d}{f} \\
Z \ar{r} & X.
\end{tikzcd}
\hspace{4em}
\begin{tikzcd}
Rf_*\DDB^{i}_{E}   & \ar{l} Rf_*\Omega^{i}_{Y}  \\
\DDB^{i}_{Z}  \ar{u} & \ar{l}  \ar{u}[swap]{f^*} \DDB^{i}_X.
\end{tikzcd}
\]
Since homotopy fibres of horizontal arrows in a homotopy pullback square are isomorphic\footnote{the reader can also use the octahedral axiom here instead}, we get the exact triangle:
\[
Rf_{*}\Omega^{i}_Y(\log E)(-E) \to \underline{\Omega}^{i}_X \to \underline{\Omega}^{i}_Z \xrightarrow{+1}.
\]
By taking $\bD_X(-)$, we get the exact triangle
\begin{equation} \label{eq:triangledualnonnormal}
\bD_Z(\underline{\Omega}^{i}_Z)\to \bD_X(\underline{\Omega}^{i}_X)\to Rf_{*}\Omega_Y^{d-i}(\log E)[\dim X]\xrightarrow{+1}.
\end{equation}
Hence, by $(V_{d-1})$ and the above exact triangle, we conclude that $\bD_X(\underline{\Omega}^{i}_X)\in D^{\geq -\dim X}$. Thus, $(N_d)$ holds.
\end{proof}

\begin{proposition}[cf.\ {\cite[Lemma 2.4]{SVV}}] \label{prop:dualDDB-explicit}
Let $X$ be normal connected variety of dimension $d$ defined over $\bC$. Let $f \colon Y \to X$ be a log resolution of singularities with exceptional locus $E$ and $Z \coloneqq f(E)$. Then
\begin{enumerate}
    \item $\DO^i_X \in D^{\geq 0}$,
    \item there exists a natural map:
    \[
    \theta \colon \DO^i_X \to Rf_*\Omega^i_Y(\log E)
    \]
    \item $\cH^0(\DO^i_X)=f_*\Omega^i_Y(\log E)$, and so it is torsion-free.
\end{enumerate} 
Moreover, the map $\theta$ is an isomorphism if $i < {\rm codim}_X Z$.
\end{proposition}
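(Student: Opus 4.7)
The plan is to derive (1) immediately from Lemma \ref{lemma:DOnon-normal}, and to obtain (2), (3), and the moreover part uniformly from a single exact triangle produced by applying $\Gr_{d-i}\DR$ to the adjunction triangle for $\bD_X\bQ^H_X[d]$ in the derived category of mixed Hodge modules. For (1), Lemma \ref{lemma:DOnon-normal} gives $\bD_X(\DDB^{d-i}_X) \in D^{\geq -d}$, so shifting by $-d$ yields $\DO^i_X = \bD_X(\DDB^{d-i}_X[d]) \in D^{\geq 0}$.

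To construct $\theta$ in (2), set $j \colon U = X \setminus Z \hookrightarrow X$ and $i \colon Z \hookrightarrow X$. A valuative-criterion argument (at every codimension-one point $\eta$ of the normal variety $X$, $\cO_{X,\eta}$ is a DVR and any proper birational scheme over a DVR is trivial) shows $\codim_X Z \geq 2$; in particular $U$ is smooth of dimension $d$. Apply the exact triangle $i_*i^! \to \mathrm{id} \to j_*j^* \xrightarrow{+1}$ (Remark \ref{remark:exact-triangles}) to $\bD_X\bQ^H_X[d]$ in $D^b_{\rm MHM}(X)$. Using the $6$-functor formalism ($j^*\bD_X = \bD_U j^*$ and $i^!\bD_X = \bD_Z i^*$) together with \eqref{eq:dual-trivial-HM} on the smooth $U$, this triangle becomes
\[
i_*\bD_Z\bQ^H_Z[d] \to \bD_X\bQ^H_X[d] \to j_*\bQ^H_U[d](d) \xrightarrow{+1}.
\]
Applying $\Gr_{d-i}\DR$ and identifying the terms via (a) duality \eqref{eq:GRDRDual} and \eqref{eq:GRDRTrivial2}, which give $\Gr_{d-i}\DR(\bD_X\bQ^H_X[d]) = \bD_X(\DDB^{d-i}_X[i]) = \DO^i_X[d-i]$; (b) the Tate twist \eqref{eq:GRDRTwist} and Remark \ref{remark:jpush}, which give $\Gr_{d-i}\DR(j_*\bQ^H_U[d](d)) = \Gr_{-i}\DR(j_*\bQ^H_U[d]) = Rf_*\Omega^i_Y(\log E)[d-i]$; (c) the compatibility of $\Gr\DR$ with $i_*$ from Proposition \ref{prop:JakubsFavouriteFunctor}(2), yields the exact triangle
\begin{equation} \label{eq:plantriangle}
i_*\Gr_{d-i}\DR(\bD_Z\bQ^H_Z[d]) \to \DO^i_X[d-i] \to Rf_*\Omega^i_Y(\log E)[d-i] \xrightarrow{+1},
\end{equation}
and the second arrow, shifted by $-(d-i)$, is the sought natural map $\theta$.

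For (3) and the moreover part, an analogous computation on $Z$ using \eqref{eq:GRDRTrivial}, \eqref{eq:GRDRDual} and the shift $\bQ^H_Z[d] = \bQ^H_Z[\dim Z][d-\dim Z]$ gives
\[
\Gr_{d-i}\DR(\bD_Z\bQ^H_Z[d]) \cong \bD_Z(\DDB^{d-i}_Z)[-i].
\]
Because any hyperresolution of $Z$ uses smooth schemes of dimension $\leq \dim Z$, we have $\DDB^j_Z = 0$ for $j > \dim Z$. Hence $i < \codim_X Z$ forces $\DDB^{d-i}_Z = 0$, so the first term of \eqref{eq:plantriangle} vanishes and $\theta$ is an isomorphism, proving the moreover. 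For arbitrary $i$, Lemma \ref{lemma:DOnon-normal} applied to $Z$ gives $\bD_Z(\DDB^{d-i}_Z) \in D^{\geq -\dim Z}$; combined with $\dim Z \leq d-2$, the cohomology sheaves $\cH^n$ of the first term of \eqref{eq:plantriangle} vanish for both $n = i-d$ and $n = i-d+1$. The long exact sequence of cohomology then yields $\cH^0(\DO^i_X) \cong f_*\Omega^i_Y(\log E)$, which is torsion-free because $\Omega^i_Y(\log E)$ is locally free on the smooth $Y$ and $f$ is proper and dominant to the normal $X$.

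The main conceptual obstacle is spotting that the adjunction must be applied to the already-dualised object $\bD_X\bQ^H_X[d]$ rather than to $\bQ^H_X[d]$: the na\"ive choice only yields a map $\DDB^i_X \to Rf_*\Omega^i_Y(\log E)$ on the wrong (Du Bois rather than dual Du Bois) side, and Verdier-dualising this reverses the direction of the arrow. The correct move forces the Tate twist $(d)$ on the right of the triangle via the identity $\bD_U\bQ^H_U[d] = \bQ^H_U[d](d)$ from \eqref{eq:dual-trivial-HM}.
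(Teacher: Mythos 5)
Your proposal is correct and follows essentially the same route as the paper: part (1) from Lemma \ref{lemma:DOnon-normal}, the triangle $i_*\bD_Z\bQ^H_Z[d] \to \bD_X\bQ^H_X[d] \to j_*\bQ^H_U[d](d) \xrightarrow{+1}$ (the paper obtains it by dualising $j_!\bQ^H_U[d] \to \bQ^H_X[d] \to i_*\bQ^H_Z[d]$, which is the same thing as your applying $i_*i^! \to \mathrm{id} \to j_*j^*$ to the dual), then $\Gr_{d-i}\DR$ with the identifications via \eqref{eq:GRDRDual}, \eqref{eq:GRDRTrivial}, the Tate twist, and Remark \ref{remark:jpush}, giving the triangle whose first term is $\bD_Z(\DDB^{d-i}_Z[d])$; the moreover follows from $\DDB^{d-i}_Z=0$ when $d-i>\dim Z$ and part (3) from $\bD_Z(\DDB^{d-i}_Z[d])\in D^{\geq 2}$ via Lemma \ref{lemma:DOnon-normal} and $\dim Z\leq d-2$. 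The only differences are expository (you spell out $\codim_X Z\geq 2$ and the vanishing of $\DDB^{d-i}_Z$, which the paper leaves implicit).
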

\noindent In particular, $\DO^i_X \cong Rf_*\Omega^i_Y(\log E)$ if $f$ is a strong resolution of singularities and $i < {\rm codim}_X({\rm Sing}(X))$.

\begin{remark}
The above map $\theta$ was already constructed in (\ref{eq:triangledualnonnormal}) as part of the proof of Lemma \ref{lemma:DOnon-normal} by using that $\DDB^i_X$ is a (derived) $h$-sheaf (see also \cite[Lemma 2.4]{SVV}).  Below, we recreate its construction using the theory of Hodge modules.  \emph{A priori}, it is not clear if these two maps agree. We shall not do this verification in our paper. Unless otherwise stated, all the maps we use in this article are constructed by way of Hodge modules and \emph{not} using the $h$-sheaf property of $\DDB^i_X$.
% so we can consider the following blow-up square and the induced homotopy pullback square:
% \[
% \begin{tikzcd}
% E \ar{r} \ar{d} & Y \ar{d}{f} \\
% Z \ar{r} & X.
% \end{tikzcd}
% \hspace{4em}
% \begin{tikzcd}
% Rf_*\DDB^{d-i}_E   & \ar{l} Rf_*\DDB^{d-i}_Y  \\
% \DDB^{d-i}_Z  \ar{u} & \ar{l}  \ar{u}[swap]{f^*} \DDB^{d-i}_X,
% \end{tikzcd}
% \]
% where the cocone of the bottom horizontal arrow agrees with the cocone $Rf_*\Omega^{d-i}_Y(\log E)(-E)$ of the top horizontal arrow.
\end{remark}
\begin{proof}[{Proof of Proposition \ref{prop:dualDDB-explicit}}]
First, (1) follows from Lemma \ref{lemma:DOnon-normal}. Next, we move to the proof of (2). 
To this end, consider the following exact triangle:
\[
j_!\bQ^H_U[d] \to \bQ^H_X[d] \to i_*\bQ^H_Z[d]  \xrightarrow{+1}
\]
for $j \colon U \coloneqq X \setminus Z \hookrightarrow X$ and $i \colon Z \hookrightarrow X$ being natural inclusions (see Subsection \ref{ss:HM}(3) and Remark \ref{remark:exact-triangles}). By applying Hodge module duality $\bD_X(-)$, we get an exact triangle:
\[
i_*\bD_Z(\bQ^H_Z[d]) \to \bD_X(\bQ^H_X[d]) \to j_*\bQ^H_U[d](d) \xrightarrow{+1}
\]
where 
\[
\bD_X(j_!\bQ^H_U[d]) = j_*\bD_X(\bQ^H_U[d]) = j_*\bQ^H_U[d](d)
\]
by Subsection \ref{ss:HM}(3)(4). Thus, by applying $\Gr_{d-i}\DR$ and using (\ref{eq:GRDRDual}), (\ref{eq:GRDRTrivial}), and (\ref{eq:grdrjpush}) we obtain:
\[
\bD_Z(\DDB^{d-i}_Z[i]) \to \bD_X(\DDB^{d-i}_X[i]) \xrightarrow{\theta} Rf_*\Omega^i_Y(\log E)[d-i] \xrightarrow{+1},
\]
where for the first term we used:
\begin{align*}
\Gr_{d-i}\DR(\bD_Z(\bQ^H_Z[d])) &= \bD_Z(\Gr_{-(d-i)}\DR(\bQ^H_Z[d])) \\
&= \bD_Z(\Gr_{-(d-i)}\DR(\bQ^H_Z[\dim Z])[d-\dim Z])\\
&= \bD_Z(\DDB^{d-i}_Z[\dim Z - (d-i) + d-\dim Z])\\
&= \bD_Z(\DDB^{d-i}_Z[i]).    
\end{align*}
This yields an exact triangle:
\begin{equation} \label{eq:triangledualnonnormalHM}
\bD_Z(\DDB^{d-i}_Z[d]) \to \DO^{i}_X \xrightarrow{\theta} Rf_*\Omega^i_Y(\log E) \xrightarrow{+1},
\end{equation}
and concludes the construction of $\theta$. Moreover, $\theta$ is an isomorphism when $d-i > \dim Z$, or equivalently $i < {\rm codim}_X(Z)$.\\

We are left to show (3). 
By  using Lemma \ref{lemma:DOnon-normal}, we see that 
 \[
 \bD_{Z}(\DDB^{d-i}_Z[d]) \in D^{\geq 2}
 \]
 given that $\dim Z \leq d-2$ by normality of $X$. In particular, by the exact triangle (\ref{eq:triangledualnonnormalHM}), we have that  $\DO^i_X \in D^{\geq 0}$ and by applying $\cH^0$, we get that   $\cH^0(\DO^i_X) = f_*\Omega^i_Y(\log E)$.
\end{proof}

\begin{remark} \label{rem:fact}
We have a natural factorisation:
\[
\bQ^H_X[d] \xrightarrow{\phi} \IC^H_X \xrightarrow{\cong} \bD_X(\IC^H_X)(-d) \xrightarrow{\bD_X(\phi)} \bD_X(\bQ^H_X[d])(-d).
\]
By applying $\Gr_\kdot\DR$ we thus get a factorisation
\[
\DDB^i_X \xrightarrow{\psi} \IO^i_X \xrightarrow{\cong} \bD_X(\IO^{d-i}_X[d]) \xrightarrow{\bD_X(\psi)} \bD_X(\DDB^{d-i}_X[d]) =  \DO^i_X.
\]
Combined with natural maps $\Omega^i_X \to \Omega^i_{X,h} \to \DDB^i_X$ and the map from Proposition \ref{prop:dualDDB-explicit}, we get the factorisation:
\begin{equation} \label{eq:fact}
\Omega^i_X \to \Omega^i_{X,h} \to \DDB^i_X \to \IO^i_X \to \DO^i_X \to Rf_*\Omega^i_Y(\log E).
\end{equation}
Let us emphasise that the composite map $\Omega^i_X \to Rf_*\Omega^i_Y(\log E)$ agrees with the usual pullback map $f^*$. Indeed, because $\Omega^i_X$ is concentrated in degree $0$, we need to verify that the induced map $\Omega^i_X \to \cH^0(Rf_*\Omega^i_Y(\log E)) = f_*\Omega^i_Y(\log E)$ agrees with the pullback map $f^*$, but because the latter term is torsion-free, this can be checked on the smooth locus, for which this is clear.
\end{remark}

% \begin{remark}
% Let us show that if $X$ is a normal connected variety of dimension $d$ over $\bC$, then $\cH^0(\DO^i_X)$ is torsion-free for every $i$. To this end, we will use the fact that every resolution of singularities can be constructed by blowing-up along smooth centers
% Let $X$ be a non-necessarily normal variety with $d$ being the maximum among the dimensions of its stalks. We claim that $\cH^0(\bD_X(\DDB^{d-i}_X[d]))$ is torsion-free To prove this, we first reduce to the case when $X$ is normal and connected. To this end, consider the following blow-up square: 
% \[
% \begin{tikzcd}
% \cD \ar{r} \ar{d} & X^{\nu} \ar{d}{\nu} \\
% \cC \ar{r} & X,
% \end{tikzcd}
% \]
% where $\nu \colon X^{\nu} \to X$ is the normalisation and $\cC$ and $\cD$ are conductors. Then we have an exact triangle
% \[
% \DDB^{d-i}_X \to \DDB^{d-i}_C \oplus \DDB^{d-i}_{X^{\nu}} \to \DDB^{d-i}_D \xrightarrow{+1}
% \]
% By applying duality, we get:
% \[
% \bD_X(\DDB^{d-i}_D[d]) \to \bD_X(\DDB^{d-i}_C[d]) \oplus \bD_X(\DDB^{d-i}_{X^{\nu}}[d]) \to \bD_X(\DDB^{d-i}_X[d]) \xrightarrow{+1}
% \]

% % In order to prove this, let $a$ Indeed, by the proof of Proposition \ref{prop:dualDDB-explicit} with $Z$ being the complement of the open locus over which $f$ is an isomorphism, we have an exact triangle:
% % \[
% % \DO^i_Z
% % \]
% \end{remark}

Finally, let us restate Kebekus--Schnell's extension theorem.
\begin{proposition} \label{prop:KebekusSchnell}
Let $X$ be a normal connected variety of dimension $d$ defined over $\bC$. Suppose that $X$ has rational singularities and let $f \colon Y \to X$ be a log resolution of singularities. Then the natural maps of sheaves induced from (\ref{eq:fact}):
\[
\Omega^i_{X,h} = \cH^0(\DDB^i_{X}) \to \cH^0(\IO^i_X) \to \cH^0(\DO^i_X) \to f_*\Omega^i_Y(\log E)
\]
are all equalities for every $i \geq 0$, with all the sheaves equal to $\Omega^{[i]}_X$.
\end{proposition}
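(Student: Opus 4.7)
My plan is to identify the rightmost sheaf in the chain using the Kebekus--Schnell extension theorem, to observe that every map in the chain is forced to be injective, and then to conclude via a sandwich argument by showing that the leftmost sheaf $\Omega^i_{X,h}$ is already reflexive.

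First, by Proposition \ref{prop:dualDDB-explicit}(3) we have $\cH^0(\DO^i_X) = f_*\Omega^i_Y(\log E)$. Combining this with the Kebekus--Schnell extension theorem for varieties with rational singularities yields the last two identifications: $\cH^0(\DO^i_X) = f_*\Omega^i_Y(\log E) = \Omega^{[i]}_X$.

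Next, I would observe that all four sheaves in the chain are torsion-free: $\Omega^i_{X,h}$ by the definition of the Deligne--Du Bois complex, $\cH^0(\IO^i_X) = f_*\Omega^i_Y$ by Proposition \ref{prop:all-properties-of-IO}(3), and the remaining two sheaves as above. Since the natural maps in the chain all restrict to the identity on the smooth locus $X_{\rm sm}$, whose complement has codimension $\geq 2$ in the normal variety $X$, their kernels are torsion subsheaves of torsion-free sheaves and therefore vanish. Hence each map is injective and the chain presents $\Omega^i_{X,h}$, $\cH^0(\IO^i_X)$, and $\cH^0(\DO^i_X)$ as a nested family of coherent subsheaves of $\Omega^{[i]}_X$ that all restrict to $\Omega^i_{X_{\rm sm}}$ on the smooth locus.

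It remains to establish $\Omega^i_{X,h} = \Omega^{[i]}_X$; once this is done, every intermediate term equals $\Omega^{[i]}_X$ by the sandwich. I would argue this by exploiting that $\DDB^i_X$ is the derived $h$-sheafification of $\Omega^i$, as recorded in Subsection \ref{ss:preliminaries}(5), so $\Omega^i_{X,h} = \cH^0(\DDB^i_X)$ is the ordinary $h$-sheafification of $\Omega^i$ on $X$. For normal $X$ this sheaf is $S_2$ (in particular reflexive), a fact that can be verified either by invoking the theory of differential forms in the $h$-topology or by a direct $h$-descent argument along the log resolution $f\colon Y \to X$, using that $Y$ is smooth and that $f_*\Omega^i_Y(\log E)$ is already reflexive by Kebekus--Schnell. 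Consequently, the inclusion $\Omega^i_{X,h} \hookrightarrow \Omega^{[i]}_X$, which is an isomorphism over $X_{\rm sm}$, must in fact be an equality of reflexive sheaves, completing the proof.

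The main obstacle is precisely the final reflexivity step: without an appeal to either the $S_2$ nature of $h$-sheaves on normal varieties or a direct descent argument, the injectivity analysis alone only yields the inclusion $\Omega^i_{X,h} \subseteq \Omega^{[i]}_X$, not the desired equality. Everything else in the chain is then controlled by Kebekus--Schnell together with the basic properties of $\DO^i_X$ and $\IO^i_X$ recorded earlier in Propositions \ref{prop:all-properties-of-IO} and \ref{prop:dualDDB-explicit}.
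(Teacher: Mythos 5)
The identifications $\cH^0(\DO^i_X)=f_*\Omega^i_Y(\log E)$, the torsion-freeness of all four sheaves, the resulting injectivity of every map in the chain, and the sandwich inside $\Omega^{[i]}_X$ are all fine and coincide with what the paper does. The genuine gap is in your final step, which is exactly the heart of the proposition: you assert that for a normal variety $X$ the sheaf $\Omega^i_{X,h}=\cH^0(\DDB^i_X)$ is $S_2$, hence reflexive, hence equal to $\Omega^{[i]}_X$. This is false as a statement about general normal varieties: $h$-differential forms are torsion-free by construction, but the equality $\Omega^i_{X,h}=\Omega^{[i]}_X$ (equivalently, the fact that every reflexive form is an $h$-form) is a deep extension-type theorem, known for klt singularities (Greb--Kebekus--Kov\'acs--Peternell) and for rational singularities (Kebekus--Schnell), and failing in general; if it held unconditionally, those theorems and the rational-singularities hypothesis in the proposition would be superfluous. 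Your two proposed justifications do not close this: being a subsheaf of the reflexive sheaf $f_*\Omega^i_Y(\log E)$ only gives torsion-freeness, never $S_2$-ness; and an $h$-descent argument along the single resolution $f$ is not enough, because exhibiting a reflexive form as an $h$-form requires knowing that it pulls back to an honest form (no log poles) on arbitrary smooth $h$-covers, which is precisely the content of the Kebekus--Schnell extension theorem rather than a consequence of reflexivity of the log pushforward.

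The paper's own proof is exactly your argument with this step replaced by a citation: it quotes \cite[Corollary 1.11]{KS21}, which states that $\Omega^i_{X,h}=\Omega^{[i]}_X$ when $X$ has rational singularities, and then concludes by the torsion-freeness of $\cH^0(\IO^i_X)$, $\cH^0(\DO^i_X)$, and $f_*\Omega^i_Y(\log E)$, just as you do. So to repair your proposal, replace the claimed unconditional $S_2$ property of $\Omega^i_{X,h}$ with this reference (or with a genuine proof that, under the rational-singularities hypothesis, reflexive forms extend without poles to every resolution and hence satisfy $h$-descent); the remainder of your write-up then goes through. A minor additional remark: since $f$ need not be a strong resolution, the maps are a priori only isomorphisms over the big open subset $X_{\rm sm}\setminus f(E)$ rather than over all of $X_{\rm sm}$, but as its complement still has codimension at least two this does not affect the torsion/injectivity argument.
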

\begin{proof}    
By \cite[Corollary 1.11]{KS21},
 we get that $\Omega^i_{X,h} = \Omega^{[i]}_X$.
Hence the statement of the proposition immediately follows given that $f_*\Omega^i_Y(\log E)$, $\cH^0(\IO^i_X)$, and $\cH^0(\DO^i_X)$ are torsion-free (see Proposition \ref{prop:all-properties-of-IO}(3) and Proposition \ref{prop:dualDDB-explicit}).
\end{proof}

% \begin{remark} \label{rem:IC-direct-summand-log}
% Let $X$ be a normal variety of dimension $d$ defined over $\bC$ and let $f \colon Y \to X$ be a log resolution of singularities with exceptional divisor $E$. Then
% \[
% \IO^i_X \text{ is a direct summand of } R\pi_*\Omega^i_Y(\log E).
% \]
% Indeed, 

% this follows by the decomposition theorem:
% \[
% Rf_*
% \]
    % 
% \end{remark}

\subsection{Vanishing results}

In this subsection, we list some fundamental properties of $\DDB^i_X$, $\IO^i_X$, and $\DO^i_X$. We emphasise that most of the proofs follow immediately from left or right $t$-exactness of $\GrDR$.
\begin{lemma} \label{lem:DDB-IO-coh-amplitude}
Let $X$ be a normal connected variety of dimension $d$ defined over $\bC$. Then both $\DDB^i_X$ and $\IO^i_X$ are of cohomological amplitude $[0,d-i]$, specifically, they belong to:
\[
D^{[0,d-i]}_{\rm coh}(X).
\]
\end{lemma}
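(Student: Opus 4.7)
The plan is to exploit the fact that both $\DDB^i_X$ and $\IO^i_X$ arise as shifted graded pieces of $\GrDR$ applied to a single Hodge module concentrated in cohomological degree $0$ --- namely $\bQ^H_X[d]$ and $\IC^H_X$ respectively. By \eqref{eq:GRDRTrivial} we have $\DDB^i_X[d-i] \cong \Gr_{-i}\DR(\bQ^H_X[d])$, and by Definition \ref{def:io} we have $\IO^i_X[d-i] \cong \Gr_{-i}\DR(\IC^H_X)$. Since both $\bQ^H_X[d]$ and $\IC^H_X$ lie in $D^{\leq 0}_{\rm MHM}(X)$, right $t$-exactness of $\GrDR$ (Proposition \ref{prop:JakubsFavouriteFunctor}(4)) forces both $\DDB^i_X[d-i]$ and $\IO^i_X[d-i]$ to lie in $D^{\leq 0}_{\rm coh}(X)$. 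This yields the upper bound $\DDB^i_X, \IO^i_X \in D^{\leq d-i}_{\rm coh}(X)$ in one stroke, uniformly in $i$.

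For the lower bound one cannot simply dualise the above: Proposition \ref{prop:JakubsFavouriteFunctor}(5) supplies only left $t$-exactness with respect to the \emph{perverse} coherent $t$-structure, not the standard one, so it is not strong enough. Instead I would fall back on concrete sheaf-theoretic descriptions. For $\DDB^i_X$, I would appeal directly to the hyperresolution definition $\DDB^i_X = R\epsilon_{\kdot,*}\Omega^i_{X_\kdot}$, which manifestly lies in $D^{\geq 0}_{\rm coh}(X)$ as a derived pushforward of an honest sheaf from a smooth simplicial scheme. For $\IO^i_X$, I would invoke Proposition \ref{prop:all-properties-of-IO}(4): $\IO^i_X$ is a direct summand of $Rf_*\Omega^i_Y$ for any resolution $f \colon Y \to X$, and the latter lies in $D^{\geq 0}_{\rm coh}(X)$ since $\Omega^i_Y$ is a coherent sheaf. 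A direct summand of a complex in $D^{\geq 0}_{\rm coh}(X)$ is again in $D^{\geq 0}_{\rm coh}(X)$, which settles the claim.

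The proof is quite short and presents no serious obstacle. The only conceptual point worth highlighting is the asymmetry between upper and lower bounds in the standard $t$-structure: the Hodge module $\GrDR$ machinery transfers $D^{\leq 0}$ cleanly, but supplies only perverse coherent $D^{\geq 0}$ on the other side, so the lower amplitude requires an external input --- either the hyperresolution construction or the summand decomposition of Saito's decomposition theorem used in Proposition \ref{prop:all-properties-of-IO}.
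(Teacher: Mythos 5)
Your proof is correct and follows essentially the same route as the paper: the upper bound via $\bQ^H_X[d],\IC^H_X \in D^{\leq 0}_{\rm MHM}(X)$ and right $t$-exactness of $\GrDR$ (Proposition \ref{prop:JakubsFavouriteFunctor}(4)(6)), and the lower bound by the hyperresolution construction of $\DDB^i_X$ and the direct-summand statement of Proposition \ref{prop:all-properties-of-IO}(4) for $\IO^i_X$. One cosmetic slip: $\bQ^H_X[d]$ is in general not a single Hodge module concentrated in degree $0$ when $X$ is singular (only an object of $D^{\leq 0}_{\rm MHM}(X)$, which is the property you actually use), so the phrase ``concentrated in cohomological degree $0$'' should be dropped for it.
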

\noindent That $\DDB^i_X \in D^{[0,d-i]}_{\rm coh}(X)$ is called Steenbrink's vanishing (\cite[Theorem 7.29]{Peter-Steenbrink(Book)}).
\begin{proof}
Recall that $\bQ_X[d]$ and $\IC_X$ belong to ${}^pD^{\leq 0}_{\rm cons}(X,\bQ)$. This follows by (\ref{eq:trivial-q-leq0}) in the former case and the fact that $\IC_X$ is perverse in the latter. %In the former case this follows by definition of perverse $t$-structure and the fact that $i^*\bC_X = \bC_Z$, where $i : Z \to X$ is any closed embedding. The latter case is automatic, as $\IC_X$ is perverse.
In particular, $\bQ^H_X[d]$ and $\IC^H_X$ belong to $D^{\leq 0}_{\rm MHM}(X)$ (see Subsection \ref{ss:HM}(1)(2)). Therefore, both 
\[
\GrDR(\IC^H_X),\, \GrDR(\bQ^H_X[d]) \in D^{\leq 0}_{\rm coh}(X)
\]
by the right $t$-exactness of $\GrDR$ (Proposition \ref{prop:JakubsFavouriteFunctor}(4)). Hence
\[
\DDB^i_X,\, \IO^i_X \in D^{\leq d-i}_{\rm coh}(X)
\]
by Proposition \ref{prop:JakubsFavouriteFunctor}(6) and Definition \ref{def:io}. The fact that 
\[
\DDB^i_X,\, \IO^i_X \in D^{\geq 0}_{\rm coh}(X)
\]
is clear by construction and Proposition \ref{prop:all-properties-of-IO}(4), respectively.
\end{proof}

% \begin{remark}
% Using a hyperplane section argument (/localisation) we can show a stronger statement, that both $(\DDB^i_X)_x$ and $(\IO^i_X)_x$ are contained in
% \[
% D^{[0,\dim \cO_{X,x} -i]}_{\rm coh}(X)
% \]
% for every, non-necessarily closed point $x \in X$. Here, we use that the restriction of $\bC[d]$ and $\IC_X$ to a general hyperplane section agrees with $\bC$ and $\IC$ up to shift.
    
% \end{remark}

\begin{lemma} \label{lem:loccohdual-and-int} 
Let $X$ be a normal connected variety of dimension $d$ defined over $\bC$. Then both $\IO^i_X$ and $\DO^{i}_X$ are of perverse cohomological amplitude $\geq d-i$. Specifically, this means that
\[
H^j_\m(\IO^i_X) = 0 \quad \text{ and } \quad H^j_\m(\DO^{i}_X) = 0
\]
for every maximal ideal $\m$ on $X$ and all integers $i, j \geq 0$ such that $i+j < d$.
\end{lemma}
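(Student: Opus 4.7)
The plan is to deduce both vanishing statements as immediate consequences of the left $t$-exactness of $\GrDR$ recorded in Proposition \ref{prop:JakubsFavouriteFunctor}(5), applied to two carefully chosen objects of $D^{\geq 0}_{\rm MHM}(X)$. By Remark \ref{rem:perv-coh-maximal-only}, the desired vanishings $H^j_\m(\IO^i_X) = 0$ and $H^j_\m(\DO^i_X) = 0$ for $j < d-i$ are equivalent to $\IO^i_X, \DO^i_X \in {}^pD^{\geq d-i}_{\rm coh}(X)$ in the perverse coherent $t$-structure.

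First I would treat $\IO^i_X$. The intersection complex Hodge module $\IC^H_X$ is an honest Hodge module, and in particular lies in $D^{\geq 0}_{\rm MHM}(X)$. Proposition \ref{prop:JakubsFavouriteFunctor}(5) then gives $\GrDR(\IC^H_X) \in {}^pD^{\geq 0}_{\rm coh}(X)$. Extracting the graded piece $\Gr_{-i}\DR(\IC^H_X) = \IO^i_X[d-i]$ from Definition \ref{def:io} yields $\IO^i_X \in {}^pD^{\geq d-i}_{\rm coh}(X)$, which is exactly the claimed vanishing of local cohomology.

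For $\DO^i_X$, the same strategy works after replacing $\IC^H_X$ with $\bD_X(\bQ^H_X[d])$. We have $\bQ^H_X[d] \in D^{\leq 0}_{\rm MHM}(X)$ since its underlying perverse sheaf $\bQ_X[d]$ lies in ${}^pD^{\leq 0}_{\rm cons}(X,\bQ)$ by \eqref{eq:trivial-q-leq0}, while the rationalisation functor ${\rm rat}$ is $t$-exact and conservative (Subsection \ref{ss:HM}(1)(2)). Applying Hodge module duality $\bD_X$, which agrees with Verdier duality under ${\rm rat}$ and hence is compatible with the self-dual perverse $t$-structure via \eqref{eq:cons-self-dual}, we obtain $\bD_X(\bQ^H_X[d]) \in D^{\geq 0}_{\rm MHM}(X)$. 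Proposition \ref{prop:JakubsFavouriteFunctor}(5) again gives $\GrDR(\bD_X(\bQ^H_X[d])) \in {}^pD^{\geq 0}_{\rm coh}(X)$, and using the decomposition $\GrDR(\bD_X(\bQ^H_X[d])) \cong \bigoplus_{i=0}^{d} \DO^i_X[d-i]$ recorded just before Proposition \ref{prop:dualDDB-explicit}, we extract $\DO^i_X \in {}^pD^{\geq d-i}_{\rm coh}(X)$.

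The argument is essentially formal once the preliminaries are in place, so I do not expect any serious obstacle. If anything, the step worth double-checking is the transfer of \eqref{eq:cons-self-dual} from constructible sheaves to Hodge modules, but this is already secured by the identity $\bD_X \circ {\rm rat} = {\rm rat} \circ \bD_X$ together with the conservativity of ${\rm rat}$ from Subsection \ref{ss:HM}(3). It is also worth noting the parallel with the previous Lemma \ref{lem:DDB-IO-coh-amplitude}, where the \emph{upper} perverse cohomological bound was obtained via the right $t$-exactness (Proposition \ref{prop:JakubsFavouriteFunctor}(4)) applied to $\bQ^H_X[d]$ and $\IC^H_X$ themselves; here we dualise the input to exploit the left $t$-exactness in (5).
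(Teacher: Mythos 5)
Your proposal is correct and follows essentially the same route as the paper: both argue that $\IC^H_X$ and $\bD_X(\bQ^H_X[d])$ lie in $D^{\geq 0}_{\rm MHM}(X)$ (the latter via \eqref{eq:trivial-q-leq0}, the self-duality \eqref{eq:cons-self-dual}, and the $t$-exactness and conservativity of ${\rm rat}$), then apply the left $t$-exactness of $\GrDR$ from Proposition \ref{prop:JakubsFavouriteFunctor}(5) and read off the graded pieces $\IO^i_X[d-i]$ and $\DO^i_X[d-i]$, concluding via Remark \ref{rem:perv-coh-maximal-only}. The only cosmetic difference is that the paper checks membership in ${}^pD^{\geq 0}_{\rm cons}$ before passing to Hodge modules, whereas you dualise inside $D^b_{\rm MHM}(X)$; the content is identical.
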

%Here $\cO_{X,\p}$ is the localisation of $X$ at $\p$. When $\p$ is maximal, $\dim {\cO_{X,\p}}=d$. 
\begin{proof}
Note that both $\bD_X(\bQ_X[d])$ and $\IC_X$ belong to ${}^pD^{\geq 0}_{\rm cons}(X,\bQ)$. Indeed, the former case follows from (\ref{eq:trivial-q-leq0}):
\[
\bQ_X[d] \in {}^pD^{\leq 0}_{\rm cons}(X,\bQ)
\]
and the fact that
the $t$-structure on $D^{b}_{\rm cons}(X,\bQ)$ is self-dual \eqref{eq:cons-self-dual}. The latter case follows immediately from the fact that $\IC_X$ is perverse.

In particular, $\bD_X(\bQ^H_X[d])$ and $\IC^H_X$ belong to $D^{\geq 0}_{\rm MHM}(X)$. Therefore, both 
\[
\GrDR(\bD_X(\bQ^H_X[d])), \, \GrDR(\IC^H_X) \in {}^pD^{\geq 0}_{\rm coh}(X)
\]
by the left $t$-exactness of $\GrDR$ (Proposition \ref{prop:JakubsFavouriteFunctor}(5)). Hence
\[
\DO^i_X,\, \IO^i_X \in {}^pD^{\geq d-i}_{\rm coh}(X)
\]
by Definition \ref{def:do} and Definition \ref{def:io}. Now the statement of the lemma follows from the definition of the perverse coherent $t$-structure (Definition \ref{def:perv-cons-t} or Remark  \ref{rem:perv-coh-maximal-only}).
\end{proof}

\begin{remark}
Lemma \ref{lem:loccohdual-and-int} immediately implies Flenner's extension theorem \cite{Flenner88}: if $X$ is a normal connected variety of dimension $d$ satisfying $\codim_X(\mathrm{Sing}(X))\geq i+2$, then $f_{*}\Omega^i_Y\cong \Omega^{[i]}_X$.
    To see this, it suffices to show that $f_{*}\Omega^i_Y$ is $S_2$, that is, $H^1_{\p}\left((\pi_{*}\Omega^i_Y)_{\p}\right)=0$ for all $\p\in\mathrm{Sing}(X)$.
    Following a technique of Kebekus--Schnell \cite[Proposition 6.4]{KS21}, the vanishing is reduced to
    \[
    H^1_{\p}(\IO^i_{X,\p})=0
    \]
    for all $\p\in\mathrm{Sing}(X)$.
    Indeed, by considering the exact triangle
    \[
\pi_{*}\Omega^i_Y(=\cH^0(\IO^i_X))\to \IO^i_X\to C\xrightarrow{+1},
    \]
    we can observe that $C\in D^{>0}_{\mathrm{coh}}(X)$ and thus $H^0_{\p}(C_{\p})=0$.
    
    Now, by Lemma \ref{lem:loccohdual-and-int}, we have
    \[
    \IO^i_X\in {}^pD^{\geq d-i}_{\mathrm{coh}}(X),
    \]
    which shows that 
    \[
    H^j_{\p}(\IO^i_{X,\p})=0
    \]
    for all $j<d-i-\dim V(\p)$.
    Thus, we get 
    \[
    H^1_{\p}(\IO^i_{X,\p})=0
    \]
    for all $i\geq 0$ since $\dim V(\p)\leq d-i-2$ by assumption.
\end{remark}

%The following result is clear by the above considerations whenever $i < {\rm codim}_X({\rm Sing}(X))$. It is somehow surprising that it holds without this assumption.
%The following result follows from Lemma \ref{lem:loccohdual-and-int} and Proposition \ref{prop:dualDDB-explicit} when $i < {\rm codim}_X({\rm Sing}(X))$.
\begin{proposition} \label{prop:vanishing} Let $X$ be normal connected variety of dimension $d$ defined over $\bC$ and let $f \colon Y \to X$ be a log resolution of singularities with exceptional divisor $E$. Then $R\pi_*\Omega^i_Y(\log E)$ is of perverse cohomological amplitude $\geq d - i$. In other words:
\[
H^j_\m(R\pi_*\Omega^i_Y(\log E)) = 0
\]
for every maximal ideal $\m$ and all $i, j \geq 0$ such that $i+j < d$.
\end{proposition}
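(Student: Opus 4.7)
The plan is to realise $Rf_*\Omega^i_Y(\log E)[d-i]$ as a direct summand of $\GrDR$ applied to a Hodge module that is manifestly in $D^{\geq 0}_{\rm MHM}(X)$, and then conclude by the left $t$-exactness of $\GrDR$ (Proposition \ref{prop:JakubsFavouriteFunctor}(5)). This is structurally parallel to the proof of Lemma \ref{lem:loccohdual-and-int}, which handles $\DO^i_X$ and $\IO^i_X$ by essentially the same recipe, but starting from $\bD_X(\bQ^H_X[d])$ and $\IC^H_X$ instead.

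Concretely, set $U \coloneqq X \setminus f(E)$ with $j \colon U \hookrightarrow X$ the open immersion. Assume (possibly after enlarging $E$ in an SNC way, see below) that $f$ restricts to an isomorphism $Y \setminus E \xrightarrow{\sim} U$; in particular $U$ is smooth and embeds into $Y$. Then Remark \ref{remark:jpush} with $D = E$ yields
\[
\Gr_{-i}\DR(j_*\bQ^H_U[d]) \cong Rf_*\Omega^i_Y(\log E)[d-i],
\]
so $Rf_*\Omega^i_Y(\log E)[d-i]$ is a direct summand of $\GrDR(j_*\bQ^H_U[d])$. Since $U$ is smooth, $\bQ^H_U[d]$ lies in the heart of $D^b_{\rm MHM}(U)$; because $j$ is an open immersion, $j^* = j^!$ is $t$-exact, so its right adjoint $j_*$ is left $t$-exact. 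Hence $j_*\bQ^H_U[d] \in D^{\geq 0}_{\rm MHM}(X)$, and Proposition \ref{prop:JakubsFavouriteFunctor}(5) gives
\[
\GrDR(j_*\bQ^H_U[d]) \in {}^pD^{\geq 0}_{\rm coh}(X).
\]
Passing to the direct summand, $Rf_*\Omega^i_Y(\log E) \in {}^pD^{\geq d-i}_{\rm coh}(X)$, which by Definition \ref{def:perv-cons-t} and Remark \ref{rem:perv-coh-maximal-only} translates precisely into the stated vanishing $H^j_\m(Rf_*\Omega^i_Y(\log E)) = 0$ for every maximal ideal $\m$ and all $i, j \geq 0$ with $i + j < d$.

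The main obstacle is the technical matching with the setup of Remark \ref{remark:jpush}, which requires $U$ to be smooth and $f$ to be an isomorphism over $U$. For a strong resolution whose divisorial exceptional locus equals $E$ (e.g.\ a standard Hironaka-type log resolution), this is automatic, since then $U$ coincides with the smooth locus $X_{\rm sm}$. For a general log resolution one first enlarges $E$ by further $f$-exceptional smooth divisors $D_1, \ldots, D_r$ so that the enlargement $\widetilde E = E + D_1 + \cdots + D_r$ is SNC and $X \setminus f(\widetilde E)$ coincides with the iso-locus of $f$; the argument above then applies verbatim to give the conclusion for $\widetilde E$. One finally descends from $\widetilde E$ to $E$ by induction on $r$, using the Poincaré residue short exact sequences of Lemma \ref{lem:basic-ses}, and noting that each residue contribution $Rf_*\Omega^{i-1}_{D_k}(\log \cdots)$ has support of dimension at most $d-1$, so the required perverse coherent bound for it follows from a dimension count applied to the statement for the strict transforms.
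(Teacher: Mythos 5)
Your core argument is exactly the paper's proof: identify $Rf_*\Omega^i_Y(\log E)[d-i]$ with $\Gr_{-i}\DR(j_*\bQ^H_U[d])$ via Remark \ref{remark:jpush}, observe that $j_*\bQ^H_U[d]\in D^{\geq 0}_{\rm MHM}(X)$ because $j_*$ is left $t$-exact (checked on the constructible side and transferred via the $t$-exactness and faithfulness of ${\rm rat}$), and conclude by the left $t$-exactness of $\GrDR$ (Proposition \ref{prop:JakubsFavouriteFunctor}(5)) together with Remark \ref{rem:perv-coh-maximal-only}. The final paragraph about enlarging $E$ and descending by residue sequences is unnecessary (and is the only vague step): under the paper's conventions the exceptional locus of a log resolution is the SNC divisor $E$, so $f$ is an isomorphism over $U=X\setminus f(E)$ and $U$ is smooth, which is precisely the setup that Remark \ref{remark:jpush} already presupposes with $D=E$.
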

One can obtain Steenbrink's vanishing by the proposition and local duality and as in \cite[Theorem 3.2]{kovacs13}. Thus, the proposition give another proof of Steenbrink's vanishing using the properties of $\GrDR$.
\begin{proof}
% By Lemma \ref{lem:lc-max-to-prime}, it is enough to show that
% \[
% H^j_\m(R\pi_*\Omega^i_Y(\log E)) = 0
% \]
% for every maximal ideal $\m$ and all integers $i,j \geq 0$ such that $i+j < d$. 

%\st{One can obtain the required vanishing by Matlis duality and Steenbrink's vanishing as in} \cite[Theorem 3.2]{kovacs13}. \st{Below we give another argument using the properties of $\GrDR$.}

Let $j' \colon U \to Y$ be the inclusion of the complement $U$ of the divisor $E$ and let $j \colon U \to X$ be the inclusion into $X$. Note that $f \circ j' = j$. Since $j_*$ is left $t$-exact (see \cite[1.4.16 (i)]{BBDG18}, \cite[4.2.4]{BBDG18}, or \cite[Remark 3.11(b)]{BMPSTWW2}), we get that 
\[
j_*\bQ_U[d] \in {}^pD^{\geq 0}_{\rm cons}(X,\bQ),
\]
and thus $j_*\bQ^H_U[d]\in D^{\geq 0}_{\rm MHM}(X)$ by the $t$-exactness and faithfulness of ${\rm rat}$.
Therefore,
\begin{align*}
\bigoplus^d_{i=0} Rf_*\Omega^i_Y(\log E)[d-i] 
%&\cong R\pi_*\GrDR(j'_*\bQ^H_U[d]) \\
%&\cong \GrDR(\pi_*j'_*\bQ^H_U[d])\\
\cong \GrDR(j_*\bQ^H_U[d]) \in {}^pD^{\geq 0}_{\rm coh}(X),
\end{align*}
where the first isomorphism follows from Remark \ref{remark:jpush}
% \cite[Theorem 1]{Saito07} (see also \cite[Theorem 6.1]{Mustata-Popa22}), the second isomorphism follows from Proposition \ref{prop:JakubsFavouriteFunctor}(2), 
and the last inclusion follows from the left $t$-exactness of $\GrDR$ (Proposition \ref{prop:JakubsFavouriteFunctor}(5)).

This translates to the exact vanishing on local cohomology as required.
% By Matlis duality, this is equivalent to:
% \[
% R^j\pi_*\Omega^i_Y(\log E)(-E)=0
% \]
% for all integers $i, j \geq 0$ such that $i+j > d$. This follows by Steenbrink's vanishing (see [Kovacs, Steenbrink vanishing extended, Theorem 3.2]).   
\end{proof}

\subsection{Restriction to general hyperplane}
First, we recall a result describing a restriction of a Hodge module to a general hyperplane. 
\begin{proposition} \label{prop:restriction-hodge-module}
Let $X$ be a normal quasi-projective variety over $\bC$ and let $\cM$ be a mixed Hodge module. Let $H$ be a general member of a base-point-free linear system, let $i \colon H \hookrightarrow X$ be the natural inclusion, and let $\cM_H \coloneqq \cH^{-1}(i^*\cM)$. Then there exists an exact triangle:
% \[
% 0 \to \Gr_{\kdot}\DR(i^*\cM) \to \GrDR(\cM) \otimes^L_{\cO_X}\! \cO_H \to \Gr_{\kdot}\DR(i^*\cM[-1])[1] \xrightarrow{+1}.
% \]
% Moreover, when $\cM$ is a mixed Hodge module, we get the following compatible exact triangle for $\cM_H = \cH^{-1}(i^*\cM)$:
\[
\Gr_{\kdot+1}\DR(\cM_H) \to \GrDR(\cM)\otimes^L_{\cO_X}\! \cO_H \to \Gr_{\kdot}\DR(\cM_H)[1] \xrightarrow{+1}.
\]
\end{proposition}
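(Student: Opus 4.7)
The plan is to reduce the proposition to a direct computation on a smooth ambient variety by combining the Koszul resolution of $\cO_H$ with the conormal short exact sequence, and then to invoke Saito's formula for the Hodge filtration on non-characteristic pullbacks. More concretely, I would first use Remark \ref{rem:linsys-rest} to embed $X$ into a smooth variety $W$ and extend the base-point-free linear system from $X$ to one on $W$, so that $H = H_W \cap X$ for a smooth divisor $H_W \subset W$ that is itself a general member of the extended system. Since $H$ is general, a standard Bertini-type argument ensures that $H_W$ can be chosen non-characteristic with respect to the filtered $D_W$-module $(M, F_\kdot)$ underlying $\cM$. The non-characteristic condition has two consequences I would exploit: $i^*\cM$ is concentrated in Hodge module degree $-1$, so $\cM_H[1] \cong i^*\cM$; and, if $t$ denotes a local equation of $H_W$, then $t$ acts as a non-zero-divisor on every graded piece $\Gr^F_k M$.

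Next, I would compute $\GrDR(\cM) \otimes^L_{\cO_W} \cO_{H_W}$ by resolving $\cO_{H_W}$ via the Koszul complex $\cO_W(-H_W) \xrightarrow{t} \cO_W$, and by applying the conormal short exact sequence
\[
0 \to \Omega^{k-1}_{H_W}(-H_W) \to \Omega^k_W \otimes_{\cO_W}\cO_{H_W} \to \Omega^k_{H_W} \to 0
\]
(Lemma \ref{lem:basic-ses} with $E = 0$) term-by-term along the de Rham complex \eqref{eq:defGRDRFancy}. This endows $\GrDR(\cM) \otimes^L_{\cO_W} \cO_{H_W}$ with a two-step filtration whose associated graded pieces are two de Rham-type complexes on $H_W$, and this immediately yields an exact triangle relating the middle term of the proposition to those two complexes.

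The main obstacle is identifying these two graded pieces with $\Gr_{\kdot+1}\DR(\cM_H)$ and $\Gr_\kdot\DR(\cM_H)[1]$. This relies on Saito's explicit formula for the Hodge filtration on the non-characteristic pullback: the underlying $D_{H_W}$-module of $\cM_H$ is $M/tM$, and its Hodge filtration is induced from $F_\kdot M$, with a shift that absorbs both the one-degree cohomological shift between $i^*\cM$ and $\cM_H = \cH^{-1}(i^*\cM)$ and the usual left/right $D$-module conventions. The index shift of $+1$ on the left-hand term of the proposition is precisely a manifestation of this shift; under it, the sub-complex involving $\Omega^{k-1}_{H_W}(-H_W)$ matches with $\Gr_{\kdot+1}\DR(\cM_H)$ and the quotient with $\Omega^k_{H_W}$ matches with $\Gr_\kdot\DR(\cM_H)[1]$. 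Once these conventions are unravelled carefully, the desired exact triangle follows formally from the two-step filtration produced in the previous step.
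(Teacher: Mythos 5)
Your proposal follows, in substance, the same route as the paper's: the paper's entire proof consists of observing that a general member of a base-point-free linear system is non-characteristic for $\cM$, invoking the embedding of Remark \ref{rem:linsys-rest}, and then citing \cite[Proposition~4.17]{KS21}. What you do is unwind the proof of that cited proposition --- Koszul resolution of $\cO_{H_W}$, the conormal sequence applied term by term to \eqref{eq:defGRDR}, and Saito's description of the non-characteristic restriction as $M/tM$ with the induced filtration --- so Saito's restriction formula plays for you exactly the role that the citation of Kebekus--Schnell plays in the paper. Your auxiliary claims are fine: $t$-torsion-freeness of each $\Gr^F_kM$ holds for a general member (and is part of the non-characteristic package for Hodge modules), and passing between $\otimes^L_{\cO_X}\cO_H$ and restriction along $H_W$ is legitimate because the local equation of $H_W$ is a non-zero-divisor on $\cO_X$.

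One step, however, is asserted too quickly. The sub-complex produced by your two-step filtration has terms $\Gr^F_{k+j}(M/tM)\otimes\Omega^{j-1}_{H}\otimes\cO_H(-H)$, i.e.\ it is $\Gr_{\kdot+1}\DR(\cM_H)$ twisted by the conormal bundle $\cO_H(-H)$, and this line-bundle factor cannot be absorbed by a filtration-index shift or by left/right $D$-module conventions, which only move the grading and the cohomological degree. It is exactly the twist that appears explicitly in Corollary \ref{cor:restrictionDDB-and-IO} (and in \cite[Lemma~3.2]{SVV}, \cite[Lemma~6.6]{Park24}); the displayed triangle in Proposition \ref{prop:restriction-hodge-module} omits it, so your computation in fact recovers the twisted form rather than the literal display. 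This is a discrepancy in the printed statement more than a flaw in your argument, but the sentence claiming the conventions ``absorb'' the twist should be replaced by simply carrying $\cO_H(-H)$ along in the first term.
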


\begin{proof}
Since $H$ is a general member of a base-point-free linear system, it is non-characteristic for $\cM$ (see \cite[Definition 4.14]{KS21}).
Thus the proposition follows from \cite[Proposition 4.17]{KS21}. Here, we are implicitly using the fact that $H$ comes as a restriction of some divisor under an embedding $u \colon X \to W$ to a smooth ambient space $W$ (see Remark \ref{rem:linsys-rest}).
%For the first one we need a reference.
% The above argument implicitly assumes that $H$ is a restriction of a smooth divisor $H_W \subseteq W$ under the inclusion $u \colon X \hookrightarrow W$ into a smooth variety $W$. However, this is always true locally and so we may glue the above constructions given that both the $\GrDR$ functor and the category of mixed Hodge modules on $W$ supported on $X$ are independent of $W$.
\end{proof}
We warn the reader that we have not verified that the above exact triangle is independent of the choice of an embedding $u \colon X \to W$.

\begin{corollary}[{\cite[Lemma 3.2]{SVV} and \cite[Lemma 6.6]{Park24}}] \label{cor:restrictionDDB-and-IO}
Let $X$ be a normal quasi-projective variety over $\bC$ and let $H$ be a general member of a base-point-free linear system. Then for every integer $i \geq 0$, there exist the following exact triangles
\begin{align*}
\DDB^{i-1}_H \otimes^L_{\cO_H} \cO_H(-H) \to \DDB^{i}_X&\otimes^L_{\cO_X}\!\cO_H \to \DDB^i_H \xrightarrow{+1} \\
\IO^{i-1}_H \otimes^L_{\cO_H} \cO_H(-H) \to \IO^{i}_X& \otimes^L_{\cO_X}\!\cO_H \to \IO^i_H \xrightarrow{+1}.
\end{align*}
\end{corollary}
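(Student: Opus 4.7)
The plan is to apply Proposition \ref{prop:restriction-hodge-module} separately to the two Hodge modules $\cM = \bQ^H_X[d]$ and $\cM = \IC^H_X$, and to read off the two desired triangles from the graded pieces of the resulting $\Gr_\kdot\DR$-triangles.

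First I would identify $\cM_H \coloneqq \cH^{-1}(i^*\cM)$ in each case. Since $H$ is a general member of a base-point-free linear system, it is non-characteristic for any fixed mixed Hodge module. For $\cM = \bQ^H_X[d]$, this forces $\cM_H \cong \bQ^H_H[d-1]$ (the Hodge-theoretic lift of the standard fact $i^*\bQ_X[d] = \bQ_H[d]$), and for $\cM = \IC^H_X$ the good behaviour of the intersection complex under restriction to a general hyperplane section gives $\cM_H \cong \IC^H_H$.

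Next I would apply Proposition \ref{prop:restriction-hodge-module} at the graded index $k = -i$. For $\cM = \bQ^H_X[d]$, using \eqref{eq:GRDRTrivial} to identify
\[
\Gr_{-i+1}\DR(\bQ^H_H[d-1]) \cong \DDB^{i-1}_H[d-i], \quad \Gr_{-i}\DR(\bQ^H_X[d]) \cong \DDB^i_X[d-i],
\]
and $\Gr_{-i}\DR(\bQ^H_H[d-1])[1] \cong \DDB^i_H[d-i]$, the triangle
\[
\Gr_{-i+1}\DR(\cM_H) \to \Gr_{-i}\DR(\cM) \otimes^L_{\cO_X}\! \cO_H \to \Gr_{-i}\DR(\cM_H)[1] \xrightarrow{+1}
\]
becomes (after a shift by $i-d$) exactly the first triangle of the corollary. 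The analogous computation with $\cM = \IC^H_X$, combined with Definition \ref{def:io}, yields the triangle for $\IO^\bullet$.

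The main technical point is to confirm that the twist by $\cO_H(-H)$ appears on the leftmost term. This reflects the conormal bundle contribution to the associated graded of the non-characteristic pullback on filtered $D$-modules, i.e.\ the short exact sequence $0 \to F_{p-1}\cM_H \otimes \cO_H(-H) \to F_p\cM \otimes \cO_H \to F_p\cM_H \to 0$ (cf.\ \cite[Proposition 4.17]{KS21}). In the smooth case $\DDB^i = \Omega^i$, and both triangles reduce to the conormal short exact sequence of Lemma \ref{lem:basic-ses}
\[
0 \to \cO_H(-H) \otimes \Omega^{i-1}_H \to \Omega^i_X \otimes_{\cO_X}\! \cO_H \to \Omega^i_H \to 0,
\]
providing a concrete sanity check; the general case follows by passing through the filtered $D$-module framework.
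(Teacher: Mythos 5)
Your handling of the second triangle is essentially the paper's own route: $\IC^H_X$ is a genuine (pure) Hodge module, a general $H$ is non-characteristic for it, and $\cH^{-1}(i^*\IC^H_X)\cong \IC^H_H$ by Saito's theory, so Proposition \ref{prop:restriction-hodge-module} applies and the $\IO$-triangle drops out of the graded pieces exactly as in \cite[Lemma 6.6]{Park24}, which is what the paper cites (and spells out in the remark following the corollary).

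The first triangle, however, has a genuine gap. Proposition \ref{prop:restriction-hodge-module} is stated for a \emph{single} mixed Hodge module $\cM$, and $\bQ^H_X[d]$ is not one when $X$ is singular: $\bQ_X[d]$ lies only in ${}^pD^{\leq 0}$, so $\bQ^H_X[d]$ is merely an object of $D^b_{\rm MHM}(X)$. Hence the non-characteristic restriction statement you invoke is not available for it, and your identification $\cH^{-1}(i^*\bQ^H_X[d])\cong \bQ^H_H[d-1]$ fails in general: that cohomology is taken in the Hodge-module (perverse) $t$-structure, and $\bQ_H[d-1]$ need not be perverse when $H$ is singular, so $\cH^{-1}(i^*\bQ^H_X[d])$ corresponds to ${}^p\cH^{-1}(\bQ_H[d])$ rather than to $\bQ_H[d-1]$. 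What your argument really requires is a derived-category variant of Proposition \ref{prop:restriction-hodge-module} (with $\cM_H$ replaced by the full shifted restriction $i^*\cM[-1]$ and the twisted triangle constructed at the level of filtered complexes); the paper explicitly notes that no reference for such a variant is readily available, and for that reason it derives the $\DDB$-triangle not from Hodge modules at all but from \cite[Lemma 3.2]{SVV}, which uses the $h$-sheaf (hyperresolution) description of $\DDB^i_X$ and its compatibility with general hyperplane sections. Your smooth-case sanity check is correct but does not bridge this gap.
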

\begin{proof}
Set $d = \dim X$. The former triangle exists by the same proof as in \cite[Lemma 3.2]{SVV}, and the latter by  \cite[Lemma 6.6]{Park24}.
\end{proof}

\begin{remark}
The existence of the latter exact triangle follows immediately from Proposition \ref{prop:restriction-hodge-module} given that $\cH^{-1}(i^*\IC^H_X) = \IC^H_H$ by Saito's theory (see \cite[Proof of Lemma 6.6]{Park24}). Similarly, one can obtain the former exact triangle by using a derived category variant of Proposition \ref{prop:restriction-hodge-module}. However, since there is no readily available reference for this, we do not pursue such an approach here. Instead, we refer to \cite[Lemma 3.2]{SVV}, which establishes such an exact triangle using the $h$-sheaf property of $\DDB^i_X$. For this reason, it is \emph{a priori} not clear whether these two exact triangles are compatible in any way. However, this will not be needed in our article --- we will use only the existence of the exact triangles and will not concern ourselves with the maps themselves ---and so we do not verify this here.
\end{remark}

\subsection{Miscellaneous}
We collect a few easy facts about local cohomology and restriction to a general hyperplane.

\begin{lemma} \label{lem:general-res-coh}
Let $R$ be a ring of finite type over a field which satisfies $\dim R>0$ and let $M \in D_{\rm fg}({\rm Mod}(R))$ be a bounded complex of finitely generated $R$-modules. Fix a general element $t \in R$;
specifically, we require that for all integers $k$:
\begin{equation} \label{eq:assump-general-res-coh}
 {\text{ both } R \ \text{and}\ }\cH^k(M) \text{ are $t$-torsion free.}
\end{equation}
Then for all integers $k$:
\[
\cH^k(M \otimes^L_R R/t) \cong \cH^k(M) \otimes_R R/t.
\]
\end{lemma}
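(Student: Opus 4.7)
The plan is to reduce the statement to a standard long exact sequence argument. The hypothesis that $R$ is $t$-torsion free gives a short exact sequence of $R$-modules
\[
0 \to R \xrightarrow{\,t\,} R \to R/t \to 0.
\]
Tensoring derivedly with $M$, I would obtain an exact triangle
\[
M \xrightarrow{\,t\,} M \to M \otimes^L_R R/t \xrightarrow{+1}.
\]

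Next I would take cohomology of this triangle, producing for each $k$ the long exact sequence
\[
\cH^{k-1}(M\!\otimes^L_R\! R/t) \to \cH^k(M) \xrightarrow{t} \cH^k(M) \to \cH^k(M\!\otimes^L_R\! R/t) \to \cH^{k+1}(M) \xrightarrow{t} \cH^{k+1}(M).
\]
By assumption \eqref{eq:assump-general-res-coh}, multiplication by $t$ is injective on every cohomology sheaf $\cH^k(M)$ and $\cH^{k+1}(M)$. Hence the long exact sequence splits into short exact sequences
\[
0 \to \cH^k(M)/t\cH^k(M) \to \cH^k(M \otimes^L_R R/t) \to \cH^{k+1}(M)[t] \to 0,
\]
in which the rightmost term (the $t$-torsion of $\cH^{k+1}(M)$) vanishes. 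This identifies $\cH^k(M \otimes^L_R R/t)$ with $\cH^k(M)/t\cH^k(M) = \cH^k(M) \otimes_R R/t$, as required.

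There is no real obstacle here: the argument is essentially the standard Koszul/long exact sequence computation once the two torsion-freeness hypotheses are in place. The only thing worth double-checking is that the genericity assumption ensuring \eqref{eq:assump-general-res-coh} is indeed achievable, i.e.\ that a general $t$ avoids the finitely many associated primes of $R$ and of each (nonzero) $\cH^k(M)$; this is possible because $M$ is bounded with finitely generated cohomology, so only finitely many associated primes appear in total, and $\dim R > 0$ ensures that generic linear forms avoid them.
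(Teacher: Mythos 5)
Your proof is correct, but it takes a different route from the paper's. You exploit the $t$-torsion-freeness of $R$ to write $R/t$ as the cone of $R \xrightarrow{t} R$, so that $M \otimes^L_R R/t$ becomes the cone of $M \xrightarrow{t} M$, and then the long exact sequence in cohomology together with the injectivity of $t$ on each $\cH^k(M)$ splits into
\[
0 \to \cH^k(M)\otimes_R R/t \to \cH^k(M \otimes^L_R R/t) \to \cH^{k+1}(M)[t] = 0,
\]
which is exactly the conclusion; note that the resulting isomorphism is induced by the natural map $\cH^k(M) \to \cH^k(M\otimes^L_R R/t)$, which is what the lemma is used for later. The paper instead argues at the chain level: it replaces $M$ by a bounded-above complex of free modules, sets $B_k = \operatorname{im}(M_{k-1}\to M_k)$ and $Z_k = \ker(M_k \to M_{k+1})$, and uses the torsion-freeness hypotheses (on $\cH^k(M)$, and on $B_{k+1}\subseteq M_{k+1}$ via freeness of $M_{k+1}$ over the $t$-torsion-free ring $R$) to show that $B_k\otimes_R R/t$ and $Z_k\otimes_R R/t$ are precisely the boundaries and cycles of the reduced complex, then tensors $0\to B_k\to Z_k\to \cH^k(M)\to 0$ with $R/t$. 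Your argument is shorter and more conceptual (a one-step Koszul/universal-coefficients computation), while the paper's is more hands-on but establishes the same statement with the same hypotheses; both proofs use the two torsion-freeness assumptions in essentially the same roles. Your closing remark about achievability of \eqref{eq:assump-general-res-coh} for general $t$ matches the comment following the lemma in the paper and is not needed for the proof itself.
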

\noindent Note that $\cH^k(M)$ being $t$-torsion free is the same as saying that none of the associated primes of $\cH^k(M)$ contain $t$. Here there are only finitely many associated primes of $\cH^k(M)$ for a fixed $k$ as this module is finite over $R$. 
\begin{proof}

We denote the $i$-th term of a complex representing $M$ by  $M_i$. Since every module admits a resolution by free modules (possibly of infinite length) and since $M$ is bounded, using the standard total complex method we may replace $M$ by a quasi-isomorphic complex bounded from above such that $M_i$ are free for all $i$.

We have the following sequence:
\[
\cdots \to M_{k-1} \to M_k \to M_{k+1} \to \cdots
\]
Now make the following definition:
\[
B_k \coloneqq {\rm im}(M_{k-1} \to M_k) \quad \text{ and } \quad Z_k \coloneqq {\rm ker}(M_k \to M_{k+1}).
\]
We get a short exact sequence:
\[
0 \to B_k \to Z_k \to \cH^k(M) \to 0.
\]
\begin{claim} The following identities hold.
\begin{align}   
B_k \otimes_R R/t &= {\rm im}(M_{k-1} \otimes_R R/t \to M_{k} \otimes_R R/t) \label{eq:bktensor}\\
Z_k \otimes_R R/t &= {\rm ker}(M_{k} \otimes_R R/t \to M_{k+1} \otimes_R R/t) \label{eq:zktensor}.
\end{align}
\end{claim}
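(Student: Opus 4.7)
The plan is to show that $M_k/B_k$ is $t$-torsion free for every $k$, which via the standard Tor-vanishing dictionary will simultaneously yield both identities. First I would observe that after the reduction to a bounded-above complex of free modules, each $M_i$ is itself $t$-torsion free (since $R$ is), and hence all of its submodules, in particular $B_k$, $Z_k$, and $B_{k+1}$, are $t$-torsion free as well.

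The second step is to combine the two standard short exact sequences $0 \to B_k \to Z_k \to \cH^k(M) \to 0$ and $0 \to Z_k \to M_k \to B_{k+1} \to 0$ into the sequence
\[
0 \to \cH^k(M) \to M_k/B_k \to B_{k+1} \to 0
\]
induced by $B_k \subseteq Z_k \subseteq M_k$. The left-hand term is $t$-torsion free by hypothesis (\ref{eq:assump-general-res-coh}), the right-hand term is $t$-torsion free by the observation above, and an application of the snake lemma to multiplication by $t$ gives that $M_k/B_k$ is $t$-torsion free as well; equivalently, $\Tor_1^R(M_k/B_k, R/t) = 0$.

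To deduce (\ref{eq:bktensor}) I would factor $M_{k-1} \to M_k$ as the surjection $M_{k-1} \twoheadrightarrow B_k$ followed by the inclusion $B_k \hookrightarrow M_k$. Right exactness of $-\otimes_R R/t$ preserves the surjection, and the injectivity of $B_k \otimes_R R/t \to M_k \otimes_R R/t$ follows from the Tor-vanishing just established. For (\ref{eq:zktensor}) I would tensor $0 \to Z_k \to M_k \to B_{k+1} \to 0$ with $R/t$; this remains short exact because $\Tor_1^R(B_{k+1}, R/t) = 0$. The induced map $M_k\otimes_R R/t \to M_{k+1}\otimes_R R/t$ then factors as $M_k\otimes_R R/t \twoheadrightarrow B_{k+1}\otimes_R R/t \hookrightarrow M_{k+1}\otimes_R R/t$, where the injection on the right is (\ref{eq:bktensor}) applied with $k$ replaced by $k+1$. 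The kernel of this composition is therefore exactly $Z_k\otimes_R R/t$.

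I do not anticipate a real obstacle: everything reduces to tracking which submodules are $t$-torsion free. The only nontrivial input is the snake-lemma step showing $M_k/B_k$ is $t$-torsion free, and this is the unique place where the hypothesis on $\cH^k(M)$ actually enters.
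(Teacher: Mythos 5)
Your proof is correct and follows essentially the same route as the paper: the same short exact sequences $0 \to B_k \to Z_k \to \cH^k(M) \to 0$ and $0 \to Z_k \to M_k \to B_{k+1} \to 0$, the same observation that $M_k/B_k$ is $t$-torsion free because $\cH^k(M)$ and $B_{k+1} \subseteq M_{k+1}$ are, and the same Tor long exact sequence arguments to get the two identities. Your snake-lemma phrasing of the torsion-freeness of $M_k/B_k$ is just a more explicit version of the paper's one-line assertion.
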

\begin{proof}
The prove the first equality (\ref{eq:bktensor}), consider the following maps:
\[
M_{k-1} \twoheadrightarrow B_k \hookrightarrow M_k, 
\]
which after tensoring by $R/t$ yield
\[
M_{k-1} \otimes_R R/t \twoheadrightarrow B_k \otimes_R R/t \to M_k \otimes_R R/t.
\]
To conclude the proof of the first equality \eqref{eq:bktensor}, we need to check that $B_k \otimes_R R/t \to M_k \otimes_R R/t$ is injective. To this end, consider the following short exact sequences:
\begin{align*}
&0 \to B_k \to M_k \to M_k/B_k \to 0, \text{ and }\\
&0 \to \cH^k(M) \cong Z_k/B_k \to M_k/B_k \to B_{k+1} \to 0.
\end{align*}
Since $B_{k+1} \subseteq M_{k+1}$ and $\cH^k(M)$ are $t$-torsion-free, so is  $M_k/B_k$. In particular, by looking at the long-exact sequence of Tor, we get that $B_k \otimes_R R/t \to M_k \otimes_R R/t$ is injective  as required to conclude the proof of the first equality \eqref{eq:bktensor}.

As for the second equality (\ref{eq:zktensor}), consider the following short exact sequence:
\[
0 \to Z_k \to M_k \to B_{k+1} \to 0.
\]
Since $B_{k+1} \subseteq M_{k+1}$ is $t$-torsion-free, the long exact sequence for Tor as above,  yields the short exact sequence
\[
0 \to Z_k \otimes_R R/t \to M_k \otimes_R R/t \to B_{k+1} \otimes_R R/t \to 0
\]
with $B_{k+1} \otimes_R R/t \subseteq M_{k+1} \otimes_R R/t$ as proven above. This immediately concludes the proof of the second equality (\ref{eq:zktensor}).   
\end{proof}

By assumption (\ref{eq:assump-general-res-coh}) and long exact sequence for Tor we get that:
\begin{equation} \label{eq:ses-general-res-coh}
0 \to B_k \otimes_R R/t \to Z_k \otimes_R R/t \to \cH^k(M) \otimes_R R/t \to 0
\end{equation}
is exact. Since $M_i$ were chosen to be free, we may represent $M \otimes^L_R R/t$ by a complex $N$ whose $i$-term $N^i$ is equal to $M^i \otimes_R R/t$ for all $i$. 

Therefore, by the above claim $B_k \otimes_R R/t$ and $Z_k \otimes_R R/t$ are the boundaries and cycles at the $k$-term of the complex $N$, respectively, In particular, by (\ref{eq:ses-general-res-coh}):
\[
\cH^k(M \otimes^L_R R/t) \cong \cH^k(M) \otimes_R R/t. \qedhere
\]
\end{proof}

\begin{lemma} \label{lem:lc-max-to-prime}
Let $(R,\m)$ be a local excellent ring and let $M \in D^b_{\rm fg}({\rm Mod}(R))$ be a bounded complex of finitely generated $R$-modules. Fix an integer $k>0$ and suppose that $H^i_\m(M) = 0$ for all integers $0 \leq i \leq k$. Then
\[
H^i_{\p}(M) = 0
\]
for all prime ideals $\p$ and all integers $0 \leq i \leq k - \dim R/\p$.
\end{lemma}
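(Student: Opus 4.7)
The plan is to prove a slightly stronger ``ideal version'': for any ideal $J \subseteq R$ with $\dim R/J = c$, the hypothesis forces $H^i_J(M) = 0$ for $0 \leq i \leq k - c$; the case $J = \p$ gives the lemma. This version is convenient because after the faithfully flat base change $R \to \hat R$ the ideal $\p \hat R$ is generally no longer prime; however, $H^i_J(M) = 0$ iff $H^i_{J\hat R}(\hat M) = 0$ by flat base change, and $\dim \hat R / J \hat R = \dim R / J$. So the first step is to reduce to the case where $R$ is complete local, hence admits a normalised dualising complex $\omega^\kdot_R$.

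The second step is to translate the hypothesis via local duality \eqref{eq:local-duality} at $\m$ into $\cExt^{-i}_R(M, \omega^\kdot_R) = 0$ for $0 \leq i \leq k$. Localising at any prime $\mathfrak{a}$ of $R$ and using the standard shift $\omega^\kdot_R \otimes_R R_\mathfrak{a} \cong \omega^\kdot_{R_\mathfrak{a}}[\dim R / \mathfrak{a}]$, local duality on $R_\mathfrak{a}$ at its maximal ideal then upgrades this to the auxiliary vanishing
\[
H^i_{\mathfrak{a} R_\mathfrak{a}}(M_\mathfrak{a}) = 0 \quad \text{for every prime } \mathfrak{a} \text{ of } R \text{ and } 0 \leq i \leq k - \dim R/\mathfrak{a}.
\]

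With these preliminaries, I would run a double induction: outer on $d = \dim R$ (base $d = 0$ trivial) and, within fixed $R$ and ideal $J$ with $c = \dim R/J > 0$ (the $c = 0$ case following from the hypothesis since $\sqrt J = \m$), inner on $j \in [0, k - c]$. Assume the vanishing for all $j' < j$, suppose $H^j_J(M) \neq 0$, and pick $\mathfrak{a} \in \mathrm{Ass}\, H^j_J(M)$; necessarily $\mathfrak{a} \supseteq J$. If $\mathfrak{a} \neq \m$, then $\dim R_\mathfrak{a} < d$ and the outer induction applies to $R_\mathfrak{a}$: its hypothesis is furnished by the auxiliary vanishing, and the standard inequality $\dim R_\mathfrak{a}/JR_\mathfrak{a} \leq c - \dim R/\mathfrak{a}$ shows that its conclusion yields $H^j_J(M)_\mathfrak{a} = H^j_{JR_\mathfrak{a}}(M_\mathfrak{a}) = 0$, contradicting $\mathfrak{a}$ being associated.

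The hard part is the case $\mathfrak{a} = \m$, where localisation is useless ($H^j_J(M)_\m = H^j_J(M)$ in a local ring). For this I would invoke the Grothendieck composition spectral sequence $E_2^{p,q} = H^p_\m(H^q_J(M)) \Rightarrow H^{p+q}_\m(M)$, valid because $\Gamma_\m \circ \Gamma_J = \Gamma_\m$ (as $J \subseteq \m$). By the inner inductive hypothesis, columns $q < j$ vanish on the $E_2$ page, so $d_r$ out of $(0,j)$ lands in columns $< j$ and $d_r$ into $(0,j)$ comes from $p = -r < 0$: both contributions vanish. Hence $E_2^{0,j} = E_\infty^{0,j}$ embeds in $H^j_\m(M) = 0$ (since $j \leq k - c \leq k$), forcing $H^0_\m(H^j_J(M)) = 0$. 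But $\m \in \mathrm{Ass}\, H^j_J(M)$ gives an embedding $R/\m \hookrightarrow H^j_J(M)$, hence $H^0_\m(H^j_J(M)) \neq 0$, the desired contradiction.
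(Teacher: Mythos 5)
Your ``second step'' is, by itself, essentially the paper's proof of this lemma. The paper argues in two lines: the hypothesis says $M[k]\in{}^pD^{>0}$ for the perverse coherent $t$-structure by the closed-point criterion (Remark \ref{rem:perv-coh-maximal-only}), and Definition \ref{def:perv-cons-t} then gives $R\Gamma_{\p R_\p}(M_\p)\in D^{>k-\dim R/\p}$ for every prime. Your auxiliary vanishing --- translate the hypothesis by local duality at $\m$ into $\cExt^{-i}_R(M,\omega^\kdot_R)=0$ for $0\le i\le k$, localize, use $(\omega^\kdot_R)_{\mathfrak{a}}\cong\omega^\kdot_{R_{\mathfrak{a}}}[\dim R/\mathfrak{a}]$, and dualize back --- is exactly a hands-on proof of that closed-point-to-all-points propagation, and since the lemma is applied in the paper precisely in the localized form $H^i_{\p R_\p}(M_\p)=0$ (that is how Definition \ref{def:perv-cons-t} and the later applications read it), you could stop there; the completion trick to secure a normalized dualizing complex is a reasonable extra precaution.

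The additional double induction, which aims at the stronger non-localized statement $H^i_J(M)=0$ with supports in $V(J)$ over $R$ itself, contains a genuine gap in the case $\mathfrak{a}=\m$. You assert that all columns $q<j$ of $E_2^{p,q}=H^p_\m(H^q_J(M))$ vanish ``by the inner inductive hypothesis'', but that hypothesis only covers $0\le q<j$. Since $M$ is an arbitrary bounded complex, the hypothesis of the lemma says nothing in negative degrees, and $H^q_J(M)$ for $q<0$ can genuinely be nonzero under the hypotheses (e.g.\ $M=N\oplus N'[s]$ with $s>\dim R$ satisfies $H^i_\m(M)=0$ for all $i\ge 0$, hence for every $k$, while having plenty of negative-degree $J$-local cohomology). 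Consequently the differentials $d_r\colon E_r^{0,j}\to E_r^{r,\,j-r+1}$ for $r\ge j+2$ land in columns you have not controlled, and the step $E_2^{0,j}=E_\infty^{0,j}$ is unjustified. The argument is fine if one adds the assumption $M\in D^{\ge 0}$ (which covers every use in the paper), and the general ideal version can instead be deduced from your pointwise vanishing via the depth formula for complexes $\operatorname{depth}(J,M)=\inf_{\p\in V(J)}\operatorname{depth}_{R_\p}M_\p$ (Foxby--Iyengar); but as written the spectral-sequence step does not close. A minor further point: your outer induction localizes at primes of the completed ring, whose localizations are no longer complete, so the induction should be phrased over local rings admitting a normalized dualizing complex (or you should re-complete at each stage), which is harmless since your auxiliary vanishing transfers by flat base change.
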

\begin{proof}
By Remark \ref{rem:perv-coh-maximal-only}, we have that $M \in {}^pD^{> k}_{\rm fg}({\rm Mod}(R))$. Hence, by Definition \ref{def:perv-cons-t}:
\[
R\Gamma_{\p}(M) \in D^{> k - \dim R/p}_{\rm fg}({\rm Mod}(R))
\]
which concludes the proof.
\end{proof}

\begin{lemma} \label{lem:lc-restrict-to-divisor}
Let $R$ be a ring of finite type over a field with $\dim R>0$, let $M, N \in D^b_{\rm fg}({\rm Mod}(R))$ be two bounded complex of finitely generated $R$-modules, and  let $\phi \colon M \to N$ be a map between them. Fix an integer $k \geq 0$ and a general element $t \in R$; specifically, we require that both
\begin{align*}
&{\rm Ext}^{-k+1}_{R}(M, \omega^\kdot_{R}) \quad \text{ and } \quad {\rm Ext}^{-k+1}_{R}(N, \omega^\kdot_{R})
\end{align*}
are $t$-torsion free. Fix a maximal ideal $\m$ of $R$.
Now, suppose that the map
\[
\phi \colon H^k_{\m}(M) \to H^k_{\m}(N)
\]
is injective. Then the induced map 
\[
H^{k-1}_{\m}(M \otimes^L_R R/t) \to H^{k-1}_{\m}(N\otimes^L_R R/t)
\]
is also injective.
\end{lemma}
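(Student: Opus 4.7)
The plan is to combine the long exact sequence of local cohomology arising from the distinguished triangle
\[
M \xrightarrow{\cdot t} M \to M \otimes^L_R R/t \xrightarrow{+1}
\]
with a Matlis-duality translation of the hypothesis on the $\cExt$ modules. Since $t$ is general (in particular a non-zerodivisor on $R$), the above triangle comes from the short exact sequence $0 \to R \xrightarrow{\cdot t} R \to R/t \to 0$, and an identical triangle exists for $N$. Applying $R\Gamma_\m(-)$ and stacking the two resulting long exact sequences in a commutative ladder via $\phi$ yields, for $K \in \{M, N\}$, the five-term portion
\[
H^{k-1}_\m(K) \xrightarrow{\cdot t} H^{k-1}_\m(K) \to H^{k-1}_\m(K \otimes^L_R R/t) \to H^k_\m(K) \xrightarrow{\cdot t} H^k_\m(K).
\]

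The crucial reduction is that the hypothesis on $\cExt^{-k+1}_R(K, \omega^\kdot_R)$ forces the multiplication-by-$t$ map on $H^{k-1}_\m(K)$ to be surjective. Indeed, local duality \eqref{eq:local-duality} identifies the Matlis dual $H^{k-1}_\m(K)^\vee$ with the $\m$-adic completion $\cExt^{-k+1}_R(K, \omega^\kdot_R)^\wedge$. Since $\m$-adic completion is flat on finitely generated modules, the $t$-torsion-freeness assumption passes to the completion. Matlis duality, being an exact contravariant equivalence on Artinian modules, swaps kernels and cokernels; thus vanishing of the $t$-torsion of $H^{k-1}_\m(K)^\vee$ translates to vanishing of the cokernel of multiplication by $t$ on $H^{k-1}_\m(K)$.

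With this surjectivity in hand, each of the long exact sequences truncates to
\[
0 \to H^{k-1}_\m(K \otimes^L_R R/t) \hookrightarrow H^k_\m(K) \xrightarrow{\cdot t} H^k_\m(K),
\]
identifying $H^{k-1}_\m(K \otimes^L_R R/t)$ with the $t$-torsion submodule of $H^k_\m(K)$. These identifications are natural in $K$, so the map $H^{k-1}_\m(M \otimes^L_R R/t) \to H^{k-1}_\m(N \otimes^L_R R/t)$ is the restriction of $\phi \colon H^k_\m(M) \to H^k_\m(N)$ to the $t$-torsion submodules, which is injective by hypothesis.

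The only mildly subtle step is the Matlis-duality translation, where one must verify that $t$-torsion-freeness of the $\cExt$ module is preserved under $\m$-adic completion and correctly swap kernel for cokernel under the contravariant duality functor; the remainder is a formal diagram chase on the long exact sequence.
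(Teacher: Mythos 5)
Your argument is correct, but it runs on the opposite side of local duality from the paper's proof. The paper localises at $\m$, applies local duality once at the very start to convert the hypothesis ``$\phi$ injective on $H^k_\m$'' into surjectivity of ${\rm Ext}^{-k}_R(N,\omega^\kdot_R)\to {\rm Ext}^{-k}_R(M,\omega^\kdot_R)$, and then works entirely with coherent ${\rm Ext}$ modules: Grothendieck duality for $R\to R/t$ plus the ${\rm Ext}$ long exact sequence coming from $0\to R\xrightarrow{t}R\to R/t\to 0$, with the $t$-torsion-freeness of ${\rm Ext}^{-k+1}$ killing one term, identify ${\rm Ext}^{-k+1}_{R/t}(K\otimes^L_R R/t,\omega^\kdot_{R/t})$ with ${\rm Ext}^{-k}_R(K,\omega^\kdot_R)\otimes_R R/t$, and the conclusion is right-exactness of $\otimes_R R/t$ applied to a surjection. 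You instead stay with local cohomology throughout: you use local duality only to translate the torsion-freeness hypothesis into surjectivity of $\cdot t$ on $H^{k-1}_\m(K)$ (which does require the routine checks you flag --- exactness of localisation, flatness of completion, and faithfulness of Matlis duality to pass from injectivity of $\cdot t$ on the dual to vanishing of the cokernel), and then the long exact sequence for $K\xrightarrow{t}K\to K\otimes^L_R R/t$ identifies $H^{k-1}_\m(K\otimes^L_R R/t)$ naturally with the $t$-torsion of $H^k_\m(K)$, so injectivity follows by restricting $\phi$. The two proofs are essentially Matlis-dual to one another; yours avoids Grothendieck duality for the hypersurface at the cost of the duality bookkeeping, and in fact only uses the torsion-freeness hypothesis for $M$ (the connecting map for $N$ need not be injective), a small economy. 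Both arguments implicitly need $t$ to be a non-zerodivisor on $R$ so that $0\to R\xrightarrow{t}R\to R/t\to 0$ is exact and yields the relevant triangle; the paper subsumes this under ``general'' and you state it explicitly, so there is no gap on either side.
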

\begin{proof}
By localisation at a maximal ideal $\m$, we may assume that $(R,\m)$ is a local ring.
Set $l\coloneqq-k$. By local duality \eqref{eq:local-duality}, we know that the following map: 
\begin{equation} \label{eq:lc-assump}
{\rm Ext}^{l}_R(N, \omega^\kdot_R) \to {\rm Ext}^{l}_R(M, \omega^\kdot_R) 
\end{equation}
 induced by $\phi$ is surjective, and our goal is to show that
\begin{equation} \label{eq:lc-goal}
{\rm Ext}^{l+1}_{R/t}(N \otimes^L_R R/t, \omega^\kdot_{R/t}) \to {\rm Ext}^{l+1}_{R/t}(M \otimes^L_R R/t, \omega^\kdot_{R/t}) 
\end{equation}
is surjective as well.

We have the following identification:
\begin{align}
{\rm Ext}^{l+1}_{R/t}(N \otimes^L_R R/t, \omega^\kdot_{R/t}) &\cong {\rm Ext}^{l+1}_{R}(N \otimes^L_R R/t, \omega^\kdot_{R}) \label{eq:ext-restriction} \\
&\cong  {\rm Ext}^{l}_{R}(N, \omega^\kdot_{R}) \otimes_R R/t, \nonumber
\end{align}
where the first isomorphism is Grothendieck's duality, while the second isomorphism follows from the long exact sequence induced from $0 \to R \xrightarrow{\cdot t} R \to R/t \to 0$:
\[
{\rm Ext}^{l+1}_{R}(N, \omega^\kdot_{R}) \xleftarrow{\cdot t} {\rm Ext}^{l+1}_{R}(N, \omega^\kdot_{R})  \leftarrow {\rm Ext}^{l+1}_{R}(N \otimes^L_R R/t, \omega^\kdot_{R})  \leftarrow {\rm Ext}^{l}_{R}(N, \omega^\kdot_{R}) \xleftarrow{\cdot t} {\rm Ext}^{l}_{R}(N, \omega^\kdot_{R})  
 \]
and the assumption on $t$.

By repeating the same argument for $M$, we get that \eqref{eq:lc-goal} identifies with 
\[
{\rm Ext}^{l}_{R}(N, \omega^\kdot_{R}) \otimes_R R/t \to {\rm Ext}^{l}_{R}(M, \omega^\kdot_{R}) \otimes_R R/t.
\]
This map is surjective by (\ref{eq:lc-assump}) concluding the proof.
\end{proof}
% \begin{tikzcd}[row sep=large, column sep=normal]
% % Top short exact sequence
% \cExt^k_{R}(N, \omega^\kdot_{R})  & \cExt^k_{R}(N, \omega^\kdot_{R}) \arrow[l]  & \arrow[l, "\cdot t"]  \cExt^k_{R}(N \otimes^L_R R/t, \omega^\kdot_{R})  \\
% % Bottom short exact sequence
% \cExt^k_{R}(N, \omega^\kdot_{R}) & \arrow[l, "\cdot t"]  \cExt^k_{R}(N, \omega^\kdot_{R}) & \arrow[l, "\cdot t"]  \cExt^k_{R}(N, \omega^\kdot_{R})
% \arrow[from=2-1, to=1-3, "\delta", rounded corners]
% \end{tikzcd}
% \[
% \begin{tikzcd}
%   \cExt^{k+1}_{R}(N, \omega^\kdot_{R}) & \cExt^{k+1}_{R}(N, \omega^\kdot_{R}) & \cExt^k_{R}(N \otimes^L_R R/t, \omega^\kdot_{R}) \\
%     \cExt^k_{R}(N, \omega^\kdot_{R}) & \cExt^k_{R}(N, \omega^\kdot_{R}) & {C_n''} 
%     \arrow[from=1-1, to=1-2]
%     \arrow[from=1-2, to=1-3]
%     \arrow[from=2-1, to=2-2]
%     \arrow[from=2-2, to=2-3]
%     \arrow[from=1-2, to=2-2, phantom, ""{coordinate, name=Z1}]
%     \arrow[from=1-3, to=2-1, "\delta", rounded corners,
%              to path={ -- ([xshift=6ex]\tikztostart.center)
%                        |- (Z1) [pos=.25]\tikztonodes
%                        -| ([xshift=-6ex]\tikztotarget.center)
%                        -- (\tikztotarget)}]
% \end{tikzcd}
% \]
% Consider the following diagram
% \[
% \begin{tikzcd}
% N \arrow[r, "\cdot t"] & N \arrow[r] & N \otimes^L_R R/t \arrow[r, "+1"] & \hphantom{a} \\
% M \arrow[r, "\cdot t"] \arrow[u] & M \arrow[r] \arrow[u, "\phi"] & M \otimes^L_R R/t \arrow[u, "\phi"] \arrow[r, "+1"] & \hphantom{a}
% \end{tikzcd}
% \]

\section{Higher Du Bois singularities}

Following \cite{SVV}, we recall the following definition.
\begin{definition}[See Definition \ref{defintro:-predef}]
Let $X$ be a normal variety defined over $\bC$. We say that $X$ is \emph{pre-$m$-Du Bois} for an integer $m\geq 0$ if
\[
\DDB^i_X \cong \Omega^i_{X,h},
\]
for all $0 \leq i \leq m$.
\end{definition}

We recall from \cite{SVV} how the notion of pre-$m$-Du Bois singularities behave under restriction to a hyperplane. 
\begin{lemma} \label{cor:pre-k-Du Bois-restriction}
Let $X$ be a normal quasi-projective variety over $\bC$ and let $H$ be a general member of a base-point-free linear system associated to a line bundle $L$. Fix an integer $m \geq 0$. Then the following statements hold.
\begin{enumerate}
\item If $X$ is pre-$m$-Du Bois, then $H$ is pre-$m$-Du Bois. 
\item If $H$ is pre-$m$-Du Bois and $L$ is ample, then the support of $\cH^{>0}\DDB^m_X$ is of dimension $0$. 
\end{enumerate}
\end{lemma}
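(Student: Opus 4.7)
The plan for both parts is to feed the exact triangle
\[
\DDB^{i-1}_H \otimes^L_{\cO_H} \cO_H(-H) \to \DDB^{i}_X \otimes^L_{\cO_X}\!\cO_H \to \DDB^i_H \xrightarrow{+1}
\]
of Corollary \ref{cor:restrictionDDB-and-IO} into a long exact sequence of cohomology sheaves, and to use Lemma \ref{lem:general-res-coh} to exchange derived and ordinary restriction to a general hyperplane. The two directions are then essentially dual bookkeeping arguments.

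For (1), I would induct on $0 \leq i \leq m$. The base case $i=0$ is immediate: $\DDB^{-1}_H = 0$, so the triangle degenerates to $\DDB^0_H \cong \DDB^0_X \otimes^L_{\cO_X} \cO_H$; since $\DDB^0_X = \Omega^0_{X,h}$ is a torsion-free coherent sheaf, Lemma \ref{lem:general-res-coh} (applied on a finite affine cover for a sufficiently general defining section of $H$) places the right-hand side in degree $0$. For the inductive step, the hypothesis says that $\DDB^i_X = \Omega^i_{X,h}$ is a torsion-free sheaf, while by induction $\DDB^{i-1}_H$ is a sheaf, so $\DDB^{i-1}_H \otimes^L_{\cO_H} \cO_H(-H)$ is a sheaf as well (tensoring by a line bundle is exact). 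Using Lemma \ref{lem:general-res-coh} once more on $\DDB^i_X$, the middle term of the triangle is concentrated in degree $0$, and the long exact sequence forces $\cH^{>0}(\DDB^i_H) = 0$. Combined with the fact that $\DDB^i_H$ always lives in nonnegative cohomological degrees, this yields $\DDB^i_H \cong \Omega^i_{H,h}$, i.e.\ pre-$m$-Du Boisness of $H$.

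For (2), I would run the triangle in reverse. Assuming $H$ pre-$m$-Du Bois, both $\DDB^{m-1}_H$ and $\DDB^m_H$ are concentrated in degree $0$, and tensoring the first with the line bundle $\cO_H(-H)$ preserves this. The long exact sequence then shows that $\DDB^m_X \otimes^L_{\cO_X} \cO_H$ is concentrated in degree $0$. Now pick $H \in |L|$ general enough that on a finite affine cover of $X$ the local defining equation of $H$ avoids the (finitely many) associated primes of each $\cH^k(\DDB^m_X)$; Lemma \ref{lem:general-res-coh} then identifies
\[
\cH^k\bigl(\DDB^m_X \otimes^L_{\cO_X}\!\cO_H\bigr) \;\cong\; \cH^k(\DDB^m_X)\otimes_{\cO_X}\cO_H,
\]
whence $\cH^k(\DDB^m_X)|_H = 0$ for all $k>0$. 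Finally, if the support of some $\cH^k(\DDB^m_X)$ with $k>0$ contained a positive-dimensional irreducible component $Z$, ampleness of $L$ would force $Z \cap H \neq \emptyset$ for every $H \in |L|$ not containing $Z$; since a general member does not contain the fixed subvariety $Z$, this intersection is nonempty, contradicting the vanishing just established. Hence $\Supp \cH^{>0}\DDB^m_X$ is zero-dimensional.

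The only real subtlety I foresee is in (2): I need a single $H$ that simultaneously (a) has a local equation avoiding the finitely many associated primes of each $\cH^k(\DDB^m_X)$ on each chart of a finite affine cover, and (b) meets every positive-dimensional component of the support of some $\cH^{>0}(\DDB^m_X)$. Condition (a) cuts out a dense open in $|L|$ by base-point-freeness, while (b) holds for every $H \in |L|$ that does not contain the chosen component — automatic for a general $H$ since $L$ is ample — so a suitable $H$ exists, but this general-position argument needs to be spelled out with some care.
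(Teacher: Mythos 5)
Your proposal is correct and follows essentially the same route as the paper: part (1) is the ascending induction on $i$ using the first triangle of Corollary \ref{cor:restrictionDDB-and-IO} (the argument the paper delegates to \cite[Theorem A]{SVV}), and part (2) is exactly the paper's argument — the triangle gives $\cH^{>0}(\DDB^m_X\otimes^L_{\cO_X}\cO_H)=0$, Lemma \ref{lem:general-res-coh} converts this to $\cH^{>0}(\DDB^m_X)\otimes_{\cO_X}\cO_H=0$, and ampleness rules out positive-dimensional support. The general-position point you flag (one $H$ avoiding the associated primes while still meeting any positive-dimensional support component) is precisely what the paper compresses into ``since $H$ is general,'' and your spelled-out version is sound.
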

\begin{proof}
Part (1) follows by the same proof as in \cite[Theorem A]{SVV}. The idea of the proof is to run an ascending induction on $m$ using the first triangle in Corollary \ref{cor:restrictionDDB-and-IO}.

As for Part (2), suppose that $H$ is pre-$m$-Du Bois. Then  Corollary \ref{cor:restrictionDDB-and-IO} gives us an exact triangle for $0 \leq i \leq m$:
\[
\Omega^{i-1}_{H,h} \otimes_{\cO_H} \cO_H(-H) \to \DDB^{i}_X\otimes^L_{\cO_X} \cO_H \to \Omega^i_{H,h} \xrightarrow{+1}.
\]  
Hence $\cH^{>0}(\DDB^{i}_X\otimes^L_{\cO_X} \cO_H) = 0$. Since $H$ is general, Lemma \ref{lem:general-res-coh} implies that the support of $\cH^{>0}\DDB^i_X$ is of dimension $0$.
\end{proof}

The following result was obtained in the isolated singularities case in \cite{Friedman-Laza1} and in full generality in \cite{popa2024injectivityvanishingdubois}. This strengthens Steenbrink's vanishing: $\cH^j(\DDB^i_X) = 0$ for $i+j > \dim(X)$.

\begin{proposition}[{\cite[Corollary 3.3]{popa2024injectivityvanishingdubois}}] \label{prop:stronger-Steenbrink}
Let $X$ be a normal connected variety of dimension $d$ defined over $\bC$. Let $m\geq 0$ be an integer such that $X$ is pre-$(m-1)$-Du Bois. Then
\[
\mathcal{H}^{d-m} \DDB_X^m=0.
\]
\end{proposition}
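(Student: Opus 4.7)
The plan is to induct on $d = \dim X$, first reducing the question to a statement about zero-dimensional support via a general hyperplane section argument, and then closing the gap at each isolated support point by a local duality computation. Note that we implicitly assume $d > m$: for $d < m$ the conclusion is vacuous since $\DDB^m_X \in D^{\geq 0}$, whereas for $d = m$ one has $\cH^0\DDB^m_X = \Omega^m_{X,h}$, which is generically nonzero on the smooth locus. The claim is local in nature, so we may assume $X$ is quasi-projective, and we choose a sufficiently ample line bundle $L$ on a projective compactification together with a general element $H \in |L|$.

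By Corollary \ref{cor:restrictionDDB-and-IO}, there is an exact triangle
\[
\DDB^{m-1}_H \otimes^L_{\cO_H} \cO_H(-H) \to \DDB^m_X \otimes^L_{\cO_X} \cO_H \to \DDB^m_H \xrightarrow{+1}.
\]
By Lemma \ref{cor:pre-k-Du Bois-restriction}(1), $H$ inherits the pre-$(m-1)$-Du Bois property, so $\DDB^{m-1}_H \cong \Omega^{m-1}_{H,h}$ is a coherent sheaf concentrated in degree $0$; hence so is $\DDB^{m-1}_H \otimes_{\cO_H} \cO_H(-H)$. Steenbrink's vanishing (Lemma \ref{lem:DDB-IO-coh-amplitude}) on $H$, which has dimension $d-1$, gives $\cH^{d-m}\DDB^m_H = 0$. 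The long exact sequence then forces $\cH^{d-m}(\DDB^m_X \otimes^L_{\cO_X}\cO_H) = 0$, and for sufficiently general $H$, Lemma \ref{lem:general-res-coh} identifies this with $\cH^{d-m}\DDB^m_X \otimes_{\cO_X}\cO_H$. Since $L$ is ample, $H$ meets every positive-dimensional subvariety of $X$, and therefore $\Supp \cH^{d-m}\DDB^m_X$ is at most zero-dimensional.

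The remaining task, and the main obstacle, is to show this finite support is in fact empty. At a putative support point $p$, I would pass to a small neighborhood in which $p$ becomes an isolated singularity, preserving the pre-$(m-1)$-Du Bois condition. Using $\bD_X(\DDB^m_X[d]) = \DO^{d-m}_X$ from Definition \ref{def:do}, local duality \eqref{eq:local-duality} recasts the goal as $H^m_p \DO^{d-m}_X = 0$. One then invokes the exact triangle \eqref{eq:triangledualnonnormalHM},
\[
\bD_Z(\DDB^m_Z[d]) \to \DO^{d-m}_X \to Rf_*\Omega^{d-m}_Y(\log E) \xrightarrow{+1},
\]
with $Z = f(E)$ of dimension at most $d-2$, in tandem with Proposition \ref{prop:vanishing}, and leverages the pre-$(m-1)$-Du Bois hypothesis together with a recursion in $d$ to gain the one extra degree of vanishing needed to improve Steenbrink by one. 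The careful local bookkeeping at this last step --- essentially the Friedman--Laza type isolated-singularity input underlying higher Du Bois theory --- is the most delicate part of the argument.
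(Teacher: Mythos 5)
There is a genuine gap. Your first half (reduction via a general ample hyperplane, Corollary \ref{cor:restrictionDDB-and-IO}, Lemma \ref{cor:pre-k-Du Bois-restriction}(1), Steenbrink on $H$, and Lemma \ref{lem:general-res-coh}) correctly shows that $\Supp\,\cH^{d-m}\DDB^m_X$ is finite, but that was never the hard part --- the entire content of the proposition is the final step, showing this finite support is empty, and there you only gesture at an argument. The tools you name do not close it: with $i=d-m$, Proposition \ref{prop:vanishing} gives $H^j_\m(Rf_*\Omega^{d-m}_Y(\log E))=0$ only for $j<m$, which is exactly one degree short of what you need to deduce $H^m_p(\DO^{d-m}_X)=0$ from the triangle \eqref{eq:triangledualnonnormalHM}; no mechanism is offered for how ``pre-$(m-1)$-Du Bois plus recursion in $d$'' gains that degree, and gaining precisely that one degree over Steenbrink is the theorem. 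Moreover, the claim that one can shrink around a support point $p$ so that ``$p$ becomes an isolated singularity'' is false in general: the singular locus of $X$ may be positive-dimensional through $p$ even though $\cH^{d-m}\DDB^m_X$ is supported only at finitely many points, so the Friedman--Laza isolated-singularity result cannot simply be invoked, and citing it as the ``delicate part'' amounts to assuming the statement you are asked to prove.

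The paper's proof uses a different and essentially topological idea that is absent from your proposal: compare with singular cohomology. One runs the spectral sequence of the filtered complex, $E_1^{i,j}=\cH^j(\DDB^i_{X^{\rm an}})\Rightarrow \cH^{i+j}(\DDB^\kdot_{X^{\rm an}})\cong \cH^{i+j}(\bC_X)$, whose abutment vanishes in total degree $d>0$. The pre-$(m-1)$-Du Bois hypothesis kills all entries $E_1^{i,j}$ with $i<m$, $j>0$, and Steenbrink's vanishing (Lemma \ref{lem:DDB-IO-coh-amplitude}) kills those with $i+j>d$; hence every differential into or out of $E_\bullet^{m,d-m}$ vanishes, so $E_1^{m,d-m}=E_\infty^{m,d-m}=0$, i.e.\ $\cH^{d-m}(\DDB^m_X)=0$ directly --- no hyperplane reduction, no duality, and no isolated-singularity case is needed. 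If you want to keep your reduction-plus-local-duality framework, you would still need an independent proof of the punctual case, and that is where a comparison with singular (or intersection) cohomology of the type above inevitably enters.
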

\noindent For the convenience of the reader, we review the proof. In what follows we shall use the following fact. Let $C \in D^b_{\rm coh}(X)$ and let $C^{{\rm an}} \in D^b_{\rm coh}(X^{\rm an})$ be its analytification. Then $\cH^j(C)=0$ if and only if $\cH^j(C^{{\rm an}})=0$. We also refer to Remark \ref{remark:DDB-filtered} for a brief discussion on $\Omega^\kdot_X$.
\begin{proof}
Recall that $\bQ_X$ denotes, by abuse of notation, the sheaf $\bQ_{X^{\rm an}}$ on the analytification $X^{\rm an}$.
Consider the spectral sequence for filtered complexes starting on the first page:
\[
E^1_{i,j} \colon \cH^j(\DDB^i_{X^{\rm an}}) \implies \cH^{i+j}(\DDB^\kdot_{X^{\rm an}}).
\]
Note that $\DDB^\kdot_{X^{\rm an}}  \cong \bC_X$ by the Riemann--Hilbert correspondence. Thus for $i+j>0$:
\begin{equation} \label{eq:ss-end-vanishing}
\cH^{i+j}(\DDB^\kdot_{X^{\rm an}})=0.
\end{equation}
Moreover, since $X$ is pre-$(m-1)$-Du Bois, we have that
\[
\cH^j(\DDB^i_{X^{\rm an}}) = \cH^j(\DDB^i_X)  = 0
\]
for all $j>0$ and $i<m$. Last,
\[
\cH^j(\DDB^i_{X^{\rm an}}) = \cH^j(\DDB^i_{X}) = 0
\]
for $i+j > d$ by Lemma \ref{lem:DDB-IO-coh-amplitude}. Of all this is depicted in the figure below.
\begin{figure*}[h]
\begin{tikzpicture}
  \matrix (m) [matrix of math nodes,
    row sep=1.15em, column sep=0.7em,
    text height=1.15ex, text depth=0.25ex]
  {
    \cH^{d-m} & 0 & 0 & \ldots & 0 & {*} & 0 & 0 & 0 & 0 \\
    \cH^{d-m-1} & 0 & 0 & \ldots & 0 & {*} & {*} & 0 & 0 & 0 \\
    \vdots & \vdots & \vdots & \iddots & 0 & {*} & \vdots & \ddots & 0 & 0 \\
    \cH^1 & 0 & 0 & \ldots & 0 & {*} & {*} & {*} & {*} & 0 \\
    \cH^0 & {*} & {*} & \ldots & {*} & {*} & {*} & {*} & {*} & {*} \\
    & \DDB^0_{X^{\rm an}} & \DDB^1_{X^{\rm an}} & \ldots & \DDB^{m-1}_{X^{\rm an}} & \DDB^m_{X^{\rm an}} & \DDB^{m+1}_{X^{\rm an}} & \ldots & \DDB^{d-1}_{X^{\rm an}} & \DDB^d_{X^{\rm an}}. \\
  };
  
  % Add red arrows as requested
  \draw[->, red] (m-1-6) -- (m-1-7);
  \draw[->, red] (m-1-6) -- (m-2-8);
  % Continue the pattern as needed
\end{tikzpicture}
\caption{Spectral sequence}
\end{figure*}

Now, we claim that
\[
E^1_{m,d-m} = \cdots = E^\infty_{m,d-m}.
\]
Indeed, the connecting homomorphisms in the spectral sequence going in and out of $E^\star_{m,d-m}$ are all zero as indicated by the arrows in the figure.

The claim concludes the proof that $\cH^{d-m}(\DDB^m_{X^{\rm an}}) = 0$ given that $E^\infty_{m,d-m}=0$ by \eqref{eq:ss-end-vanishing}. This immediately implies that
\[
\cH^{d-m}(\DDB^m_X) = 0. \qedhere
\]
\end{proof}

With the above, we can give a purely local cohomology definition of pre-$m$-Du Bois singularities. In the isolated case this was proven in the first version of \cite{KW24}. 

\begin{proposition} \label{prop:lc-def-ofkDuBois}
Let $X$ be a normal connected variety of dimension $d$ defined over $\bC$. Then $X$ is pre-$m$-Du Bois for an integer $m\geq 0$ if and only if
\begin{equation*} \label{eq:lc-def-ofkDuBois}
H^j_\m(\Omega^i_{X,h}) \xrightarrow{\phi} H^j_\m(\DDB^i_{X})
\end{equation*}
is injective for every maximal ideal $\m$ and all $i, j \geq 0$ such that $i+j \leq d$ and $i \leq m$.
\end{proposition}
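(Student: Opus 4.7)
The forward direction is immediate: if $X$ is pre-$m$-Du Bois, then $\DDB^i_X \cong \Omega^i_{X,h}$ for every $0 \le i \le m$, so the natural map $H^j_\m(\Omega^i_{X,h}) \to H^j_\m(\DDB^i_X)$ is induced by an isomorphism in $D^b_{\rm coh}(X)$, hence trivially injective.

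For the backward direction I would induct on $m$. The inductive hypothesis, applied at indices $0, \dots, m-1$, lets us assume $X$ is pre-$(m-1)$-Du Bois, so it remains to show $\cH^j(\DDB^m_X) = 0$ for all $j \ge 1$. Combining Lemma \ref{lem:DDB-IO-coh-amplitude} with Proposition \ref{prop:stronger-Steenbrink} already yields $\DDB^m_X \in D^{[0,d-m-1]}_{\rm coh}(X)$, so the cone $C \coloneqq \mathrm{cone}(\Omega^m_{X,h} \to \DDB^m_X)$ lies in $D^{[1,d-m-1]}_{\rm coh}(X)$ and the task is to prove $C = 0$. The long exact sequence in local cohomology translates the injectivity hypothesis into the statement that $H^j_\m(\DDB^m_X) \twoheadrightarrow H^j_\m(C)$ is surjective for every maximal ideal $\m$ and every $j \le d-m-1$.

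To conclude $C = 0$ I would run a second induction, on $d = \dim X$, using restriction to a general hyperplane. Fix a base-point-free linear system associated to an ample line bundle $\cL$ and take $H$ general. Using Corollary \ref{cor:restrictionDDB-and-IO} to rewrite $\DDB^m_X \otimes^L_{\cO_X} \cO_H$ in terms of $\DDB^m_H$ and $\DDB^{m-1}_H$, and using Lemma \ref{lem:lc-restrict-to-divisor} (whose torsion-freeness hypothesis on the relevant $\cExt$-sheaves can be arranged by genericity of $H$), the injectivity hypothesis on $X$ at levels $i \le m$ and $j \le d-i$ restricts to the analogous injectivity hypothesis on $H$ at levels $i \le m$ and $j \le (d-1)-i$. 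By the inductive hypothesis on dimension, $H$ is pre-$m$-Du Bois, and Lemma \ref{cor:pre-k-Du Bois-restriction}(2) then forces $\cH^{>0}\DDB^m_X$ to have zero-dimensional support, reducing the problem to an isolated situation at finitely many closed points of $X$.

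The main obstacle — and the heart of the argument — is this remaining isolated case. At a maximal ideal $\m$ in the finite support set, each $\cH^q(\DDB^m_X)_\m$ with $q \ge 1$ is $\m$-torsion, so in the hypercohomology spectral sequence
\[
E_2^{p,q} = H^p_\m(\cH^q(\DDB^m_X)) \;\Longrightarrow\; H^{p+q}_\m(\DDB^m_X)
\]
the terms with $p \ge 1$ and $q \ge 1$ vanish, collapsing the sequence onto the bottom row and the left column. The injectivity hypothesis, translated through the edge-map description, forces the differentials $d_{q+1} \colon E_{q+1}^{0,q} \to E_{q+1}^{q+1,0}$ to vanish for $q \le d-m-1$. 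One then runs this against the Hodge-to-de Rham spectral sequence for the filtered complex $\DDB^\bullet_X$, which abuts, via Riemann–Hilbert and the analytic identification $\DDB^\bullet_X \cong \bC_X$, to the local cohomology of $\bC_X$ at $\m$ (i.e.\ reduced cohomology of the link). Combining the two sets of spectral sequence constraints with the already-known Steenbrink-type vanishing recovers the required $\cH^q(\DDB^m_X)_\m = 0$ for $q \in [1, d-m-1]$. The delicate part here is carrying out the spectral sequence bookkeeping and verifying at the algebraic–analytic interface that the two spectral sequences interact in the expected way; once this is done, everything falls into place and the induction closes.
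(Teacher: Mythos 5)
Your reduction steps coincide with the paper's: induct on $m$ and on $\dim X$, restrict to a general hyperplane using Corollary \ref{cor:restrictionDDB-and-IO} and Lemma \ref{lem:lc-restrict-to-divisor}, conclude that $H$ is pre-$m$-Du Bois by the dimension induction, and then use Lemma \ref{cor:pre-k-Du Bois-restriction}(2) to make $C=\mathrm{cone}(\Omega^m_{X,h}\to\DDB^m_X)$ supported at finitely many points, with $C\in D^{[1,d-m-1]}$ by Lemma \ref{lem:DDB-IO-coh-amplitude} and Proposition \ref{prop:stronger-Steenbrink}. The divergence, and the gap, is in the finite-support case. At that point the paper does not argue by spectral-sequence bookkeeping: it invokes Theorem \ref{thm:surj-lc} (Popa--Shen--Vo, \cite[Theorem 5.1]{popa2024injectivityvanishingdubois}), which gives \emph{surjectivity} of $H^j_\m(\Omega^m_{X,h})\to H^j_\m(\DDB^m_X)$ for all $j$ once $X$ is pre-$(m-1)$-Du Bois and pre-$m$-Du Bois off a finite set; combined with your injectivity hypothesis this makes $\phi$ an isomorphism for $i+j\le d$, hence $H^j_\m(C)=0$ for $j<d-m$, and $C=0$ follows.

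Your substitute for this step does not close. The constraints you extract from injectivity in the local-cohomology spectral sequence $E_2^{p,q}=H^p_\m(\cH^q(\DDB^m_X))\Rightarrow H^{p+q}_\m(\DDB^m_X)$ (vanishing of the differentials $E_{q+1}^{0,q}\to E_{q+1}^{q+1,0}$) only say that $\cH^q(\DDB^m_X)_\m=E_2^{0,q}$ survives to $E_\infty$, i.e.\ that $H^q_\m(\DDB^m_X)$ surjects onto it; they give no vanishing. What is needed to kill that quotient is exactly the surjectivity of $\phi$, which is the content of Theorem \ref{thm:surj-lc} --- a genuine theorem whose proof requires Hodge-theoretic input, not a formal consequence of the two spectral sequences. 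Your proposed interplay with the spectral sequence of the filtered complex $\DDB^\kdot_X\simeq\bC_X$ also fails for positional reasons: at the spot $(m,q)$ with $1\le q\le d-m-1$, nothing maps in (by pre-$(m-1)$-Du Bois), but the outgoing differentials land in the columns $p>m$, about which nothing is known, so no vanishing of $E_1^{m,q}$ follows; this is precisely why the paper's Proposition \ref{prop:stronger-Steenbrink} can only treat the corner entry $(m,d-m)$, where the targets vanish for amplitude reasons. So the step you flag as ``delicate bookkeeping'' is in fact the missing key input; either quote Theorem \ref{thm:surj-lc} as the paper does, or supply its proof, which is of a different order of difficulty than the sketch suggests.
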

\begin{proof}
Since the question is local we may assume in the proof that $X$ is affine.

The implication from left to right is clear. Hence, we may assume that map $\phi$ is injective and aim for showing that $X$ is pre-$m$-Du Bois. By ascending induction on $m$, we may assume that $X$ is pre-$(m-1)$-Du Bois. Moreover, by ascending induction of dimension, we may assume that the statement of the proposition holds for all normal connected varieties of dimension at most $d-1$.

First, we will show by induction on dimension that $\Omega^i_{X,h}$ and $\DDB^i_{X}$ differ only at a finite collection of points. To this end, let us pick $H$ a general hyperplane section. Consider the following diagram for $0 \leq i \leq m$:
\[
\begin{tikzcd}
\DDB^{i-1}_H \otimes^L_{\cO_H} \cO_H(-H) \ar[r]  & \DDB^{i}_X \otimes^L_{\cO_X} \cO_H \ar[r]& \DDB^i_H \ar[r, "{+1}"]  & \hphantom{a} \\
\Omega^{i-1}_{H,h} \otimes_{\cO_H} \cO_H(-H) \ar[r] \ar[u, "\cong"] & \Omega^{i}_{X,h} \otimes_{\cO_X}  \cO_H  \ar[u] 
 \ar[r] & \Omega^i_{H,h} \ar[r, "{+1}"]  \ar[u] &.
\end{tikzcd}
\]
Here the top row exists and is an exact triangle by Corollary \ref{cor:restrictionDDB-and-IO}. The left vertical arrow is an isomorphism because $H$ is $(m-1)$-pre-Du Bois by Lemma \ref{cor:pre-k-Du Bois-restriction}(1) since so is $X$. Finally, the bottom row is constructed from the top row by applying\footnote{we are using that $H$ is general, so that $\cH^0(\DDB^{i}_X \otimes^L_{\cO_X} \cO_H) \cong \cH^0(\DDB^{i}_X) \otimes_{\cO_X} \cO_H$ by Lemma \ref{lem:general-res-coh}.} $\cH^0$. The bottom row is exact because 
\[
\cH^{>0}(\DDB^{i-1}_H \otimes^L_{\cO_H} \cO_H(-H)) = \cH^{>0}(\Omega^{i-1}_{H,h} \otimes_{\cO_H} \cO_H(-H)) = 0.
\]
By Lemma \ref{lem:lc-restrict-to-divisor}, we know that  $H^j_\m(\Omega^i_{X,h}\otimes_{\cO_X}  \cO_H ) \to H^j_\m(\DDB^i_{X}\otimes^L_{\cO_X}  \cO_H )$ is injective for all $i,j \geq 0$ such that $i+j \leq d-1$ as long as $i \leq m$. Hence, by running a diagram chase in the above diagram, we get that the map
\[
H^j_\m(\Omega^i_{H,h} ) \to H^j_\m(\DDB^i_{H})
\]
is injective as well for $i+j \leq d-1$ and $i \leq m$. In particular, the assumptions of the currently-being-proven proposition are fulfilled for $H$, and so by induction on dimension, $H$ is pre-$m$-Du Bois, that is, $\Omega^i_{H,h} \cong \DDB^i_H$ for all $0 \leq i \leq m$. This shows that \[
\Omega^i_{X,h}  \to  \DDB^i_{X}
\]
is an isomorphism outside of a finite collection of points by Lemma \ref{cor:pre-k-Du Bois-restriction}(2).\\

To conclude the proof, consider the following exact triangle
\[
\Omega^m_{X,h} \to \DDB^m_{X} \to C \xrightarrow{+1},
\]
where $C$ is supported at a finite collection of points as proven in the above paragraph. By Theorem \ref{thm:surj-lc} below, we know that
\[
H^j_\m(\Omega^m_{X,h}) \xrightarrow{\phi} H^j_\m(\DDB^m_{X})
\]
is surjective for every $j \geq 0$ and every maximal ideal $\m$. By assumptions, the above map $\phi$ is injective for $i+j \leq d$, and so it is an isomorphism in this range. In particular, \[
\cH^j(C) = H^j_\m(C) = 0 \quad \text{ for all }\quad  j < d - m.
\]
On the other hand, $C \in D^{<d-m}_{\rm coh}(X)$ by Proposition \ref{prop:stronger-Steenbrink} and Steenbrink's vanishing (Lemma \ref{lem:DDB-IO-coh-amplitude}). This immediately implies that $C=0$.
\end{proof}

Very recently, Kovacs proved that 
\[
H^j_\m(\Omega^i_{X,h}) \xrightarrow{\phi} H^j_\m(\DDB^i_{X})
\]
is always surjective for $i \leq m$ when $X$ is pre-$(m-1)$-Du Bois (\cite{kovács2025complexesdifferentialformssingularities}). However, in the above proposition we only needed a weaker version of this statement. %The proof of the result below is based on the idea of comparing cohomology of $\DDB^\kdot_X$ with singular cohomology. What is key, and surprising, about this theorem is that it does not require us to assume that $X$ has only isolated singularities. 

%{\cred Jakub: in this and other paper we need to be extra careful as to what $H^j_\m(\Omega^i_{X,h})$ means as it may have two interpretations outside of affine case (local cohomology computed locally and computed globally)? Though I guess they are the same by excision?}
\begin{theorem} \label{thm:surj-lc}
Let $X$ be a normal connected variety of dimension $d$ defined over $\bC$. Fix an integer $m \geq 0$. Assume that $X$ is pre-$(m-1)$-Du Bois and that it is pre-$m$-Du Bois outside a finite set of points. Then
\begin{equation} \label{eq:surj-lc-goal}
H^i_\m(\Omega^m_{X,h}) \xrightarrow{\phi} H^i_\m(\DDB^m_{X})
\end{equation}
is surjective for every maximal ideal $\m$ and every $i \geq 0$.
\end{theorem}
\noindent This result has been proven in {\cite[Theorem 5.1]{popa2024injectivityvanishingdubois}} assuming that $X$ is projective. In what follows we generalise their result to the non-projective setting. The reader is recommended to consult Remark \ref{remark:DDB-filtered} before proceeding with the proof. %We recommend the reader to look at their proof first to get a better understanding of the general strategy.
\begin{proof}
Fix a non-pre-$m$-Du Bois point $\m \in X$. By compactifying, we may reduce to the case that $X$ is projective and there exists a closed subset $Z \subseteq X$ disjoint from $\m$ such that $U := X \,\backslash\, Z$ is affine and:
\begin{enumerate}
            \item $X$ is pre-$(m-1)$-Du Bois on $U$,
            \item $X$ is pre-$m$-Du Bois on $U \,\backslash\, \{\m\}$.
\end{enumerate}
Here, replacing $X$ by its compactification does not change the local cohomology at $\m$  thanks to excision (see e.g.\ \cite[Chapter 3, Exercise 2.3(f) and 2.5]{Har}). 

The key idea of the proof, as explained below, is to extend $\Omega^m_{U,h}$ to a complex $\cF^m$ on $X$ which agrees with $\DDB^m_X$ along $Z$. To this end, we shall pushforward $\tau^{>0} \DDB^m_U$ to a complex $\cG^m \in D^b_{\rm coh}(X)$ on $X$ and define $\cF^m$ as the cocone of a natural map $\varphi_m \colon \DDB^m_X \to \cG^m$. Due to limitations of triangulated categories, we will need to proceed cautiously to ensure compatibility of this construction with the map $\theta_m \colon \DDB^m_X[-m] \to \DDB^{\leq m}_X$ from Remark \ref{remark:DDB-filtered}, as that will be needed later in the proof. We emphasise that $\cF^m$ and $\cG^m$ are constructed ad hoc and do not belong to any general family.\\

% Now, for every $i \geq 0$, define an element of the derived category of $\bC$-sheaves on $X$:
% \[
% \DDB^{\leq i}_X := {\rm Cone}(F^i\DDB^\bullet_X \to \DDB^\bullet_X) \in D^b(X, \bC).
% \]
% By \cite[Proposition 1.1.11]{BBDG18} applied to the natural diagram:
% \[
% \begin{tikzcd}
% F^i\DDB^\bullet_X \ar{d} \ar{r} & \DDB^\bullet_X \ar{d}{=} \\
% F^{i-1}\DDB^\bullet_X \ar{r}  &\DDB^\bullet_X
% \end{tikzcd}
% \]
% we get the following exact triangle in $D^b(X,\bC)$:
% \begin{equation} \label{eq:DDBFiltExact}
% \DDB^{i}_X[-i] \xrightarrow{\theta_i} \DDB^{\leq i}_X \to \DDB^{\leq i-1}_X \xrightarrow{+1}.
% \end{equation}
% The choice of a map $\DDB^{\leq i}_X \to \DDB^{\leq i-1}_X$ will be irrelevant in our proof.

Given that $X$ is pre-$m$-Du Bois on $U \,\backslash\, \{\m\}$, we have that $\tau^{>0} \DDB^m_U \in D^b_{\rm coh}(U)$ is supported at $\m$.
Let $j \colon U \hookrightarrow X$ be the natural open immersion.
% \footnote{Another way to formulate this is as follows. Since $\tau^{>0} \DDB^m_U$ is bounded and its cohomologies are supported at $\m$, we can find a closed embedding $u \colon T \hookrightarrow U$ from a thickening $T$ of $\m$ and $N \in D^b_{\rm coh}(T)$ such that $\tau^{>0} \DDB^m_U = Ru_*N$. Consider a composition $v := j \circ u \colon T \hookrightarrow X$. Then $\cG^m = Rj_*Ru_*N = Rv_*N$.}
% \footnote{Another way to formulate this is as follows. Set $M := \tau^{>0} \DDB^m_U$ and let $i \colon \{\m\} \to U$ be the natural closed embedding. By \cite[0G72]{stacks-project}, we have that
% \[
% M = i_*R\Gamma_\m(M),
% \]
% where $R\Gamma_\m(M) \in D(\cO_X|_{\{\m\}})$ (see \cite[0G6Y]{stacks-project} for the definition of $D(\cO_X|_{\{\m\}})$). In other words, $R$
% Consider the following exact triangle:
% \[
% Ri_*R\Gamma_\m(\tau^{>0} \DDB^m_U) \to 
% \]
% by \cite[0G72 and 0AEF]{stacks-project} there exists a complex $\cM^m \in D^b_\m(X)$ } 
Define 
\[
\cG^m :=  Rj_*\tau^{>0} \DDB^m_U.
\]
Consider the spectral sequence for the canonical filtration (\cite[Tag 015J]{stacks-project}):
\[
E^{a,b}_2 = R^aj_*\cH^b(\tau^{>0} \DDB^m_U) \implies \cH^{a+b}(\cG^m).
\]
Since $\cH^b(\tau^{>0} \DDB^m_U)$ is supported at $\m$, we have that  $E^{a,b}_2$ is coherent for $a=0$ and equal to zero for $a>0$. The latter implies that
\[
\cH^i(\cG^m) = j_*\cH^i(\tau^{>0} \DDB^m_U)
\]
for all $i \in \bZ$. In particular, $\cG^m$ belongs to the bounded derived category of \emph{coherent} sheaves $D^{b, >0}_{\rm coh}(X)$ and is supported at $\m$. 

We denote by $\varphi_m$ the composition of the following natural maps:
\begin{equation} \label{eq:varphi}
\varphi_m \colon \DDB^{m}_X \to Rj_*\DDB^m_U \to Rj_*\tau^{>0}\DDB^m_U =\cG^m.
\end{equation}
By construction, the natural map induced by adjunction:
\[
\cG^m|_U = Lj^*Rj_* \tau^{>0}\DDB^m_U \xrightarrow{\cong} \tau^{>0} \DDB^m_U
\]
is a quasi-isomorphism. Hence
%Here, by restriction of $Ri_*\cG$ to $U$, we mean $Lj^*Ri_*\cG \in D^b_{\rm coh}(U)$ for the open immersion $j \colon U \hookrightarrow X$. 
we have the following exact triangle in $D^b_{\rm coh}(U)$:
\begin{equation}
\label{eq:UDDB}
\Omega^m_{U,h} \xrightarrow{\rm can} \DDB^m_{U} \xrightarrow{\varphi_m|_U} \cG^m|_U \xrightarrow{+1}.
\end{equation}

\begin{claim} \label{claim:psimap}
There exists a map $\psi_m \colon \DDB^{\leq m}_X \to \cG^m[-m]$ rendering the following diagram
\[
\begin{tikzcd}
    \DDB^{m}_X[-m] \ar{r}{\theta_m} \ar{d}{\varphi_m} & \DDB^{\leq m}_X \ar{d}{\psi_m} \\
\cG^m[-m] \ar{r}{=}  & \cG^m[-m]
\end{tikzcd}
\]
in $D^b(X,\bC)$ commutative. Here, the map $\theta_m$ was fixed in Remark \ref{remark:DDB-filtered}.
\end{claim}
\begin{proof}
Note that $\DDB^i_U \cong \Omega^i_{U,h}$ for $i \leq m-1$ by Assumption (1), and so by tracing through the rightmost column of (\ref{eq:DDBfilteredexact}) we get that:
\[
\DDB^{\leq m-1}_U \in D^{<m}(U, \bC).
\]
Since $\cG^m \cong Rj_*(\cG^m|_U)$ in $D^b_{\rm coh}(X)$, the same isomorphism holds in $D^b(X,\bC)$, and so, using adjunction, we see that for $r \in \{0,1\}$:
\begin{align*}{\rm Hom}_{D^b(X,\bC)}(\DDB^{\leq m-1}_X[-r], \cG^m[-m])&\cong {\rm Hom}_{D^b(U,\bC)}(\DDB^{\leq m-1}_U[-r], \cG^m|_U[-m]) \\
&=0
\end{align*} 
% {\cred T:By adjunction, we have
% \begin{align*}{\rm Hom}_{D^b(X,\bC)}(\DDB^{\leq m-1}_X[-r], (Rj_{*}i_*i^*\tau^{>0}\DDB^{m}_U)[-m])&\cong {\rm Hom}_{D^b(U,\bC)}(Lj^{*}\DDB^{\leq m-1}_X[-r], (i_*i^*\tau^{>0}\DDB^{m}_U)[-m]) \\
% \end{align*} for $j \colon U\to X$, right? Then $Rj_{*}i_*i^*\tau^{>0}\DDB^{m}_U\cong i_*i^*\tau^{>0}\DDB^{m}_X$??}
given that $\cG^m|_U[-m] \in D^{>m}_{\rm coh}(U)$. 
Thus, by applying ${\rm Hom}_{D^b(X,\bC)}(-, \cG^m[-m])$ to the last column of (\ref{eq:DDBfilteredexact}) we get an isomorphism:
\begin{equation}\label{eq:phipsi}
{\rm Hom}_{D^b(X,\bC)}(\DDB^{\leq m}_X, \cG^m[-m]) \underset{\theta^*_m}{\xrightarrow{\cong}} {\rm Hom}_{D^b(X,\bC)}(\DDB^{m}_X[-m], \cG^m[-m]).
\end{equation}
The sought-after map $\psi_m$ is then induced by $\varphi_m$ from (\ref{eq:varphi}).
\end{proof}

The 9-lemma (see \cite[Tag 05R0]{stacks-project} or \cite[Proposition 1.1.11]{BBDG18})  applied to the diagram in Claim \ref{claim:psimap}, allow us to extend it to the following diagram with each row and column being an exact triangle in $D^b(X,\bC)$\footnote{we warn the reader that the map $\DDB^{\leq m}_X \to \DDB^{\leq m-1}_X$ in this diagram might not be the same as in  Diagram (\ref{eq:DDBfilteredexact})}:
\begin{equation} \label{eq:BigPhiPsi}
\begin{tikzcd}
\cF^{m}[-m] \ar{r}{\gamma_m} \ar{d}{v_m} & \cF^{\leq m} \ar{d}{u_m} \ar{r} & \cF^{\leq m-1} \ar{d}{\cong} \ar{r}{+1} & \hphantom{a}  \\
\DDB^{m}_X[-m] \ar{r}{\theta_m} \ar{d}{\varphi_m} & \DDB^{\leq m}_X \ar{d}{\psi_m} \ar{r} & \DDB^{\leq m-1}_X  \ar{r}{+1} & \hphantom{a}  \\
\cG^m[-m] \ar{r}{=} \ar{d}{+1} & \cG^m[-m], \ar{d}{+1} \ &  & \hphantom{a}  \\
\hphantom{a} &  \hphantom{a}  & \hphantom{a} 
\end{tikzcd}
\end{equation}
Here $\cF^m$, $\cF^{\leq m}$, and $\cF^{\leq m-1}$ are defined via this diagram.  

We make two observations about $\cF^m$. First, since $\cG^m$ is supported at $\m$, the map \[v_m \colon \cF^m \to \DDB^m_X\] is a quasi-isomorphism after restricting to $X \, \backslash \, \m$.  Second, $\cH^0(\cF^m) = \cH^0(\DDB^m_X) = \Omega^m_{X,h}$ given that $\cG^m \in D^{>0}_{\rm coh}(X)$. In turn, the induced map
\begin{equation} \label{eq:isoonU}
\Omega^m_{X,h} \xrightarrow{\cong} \cH^0(\cF^m) \xrightarrow{{\rm can}} \cF^m
\end{equation}
restricts to a quasi-isomorphism on $U$  by   (\ref{eq:UDDB}).

\begin{claim}
The induced map on hypercohomology:
\[
v_m \colon \bH^i(X, \cF^m) \to \bH^i(X, \DDB^m_X)
\]
is surjective for every $i \in \bZ$. %{\blue I think in this paper we use $H$ (not $\bH$) for hypercohomologies. %{\cred Jakub: really? I thought we haven't  used hypercohomology  yet in this paper}}
\end{claim}
\begin{proof}[Proof of Claim]

%We are ready to proceed with the proof of the claim. 
% Set $\cF^{\leq i}
% := \DDB^{\leq i}_X$ for $i<m$  and
% \[
% \cF^{\leq m} := {\rm Cocone}(\DDB^{\leq m}_X  \xrightarrow{\psi} \cG^m[-m]) \in D^b(X,\bC). 
% \] 
% %{\cred T:Maybe we should use only one of $\tau^{>0}i_*i^*\DDB^{m}_X$ and $i_*i^*\tau^{>0}\DDB^{m}_X$, or add some comments about the compatibility?}
% We denote the induced map to $\DDB^{\leq m}_X$ by $u \colon \cF^{\leq m} \to \DDB^{\leq m}_X$.

Consider the natural map $\bC \cong \DDB^\kdot_X \xrightarrow{\Theta_m} \DDB^{\leq m}_X$ in $D^b(X,\bC)$. Since the composition 
\[
\bC \to \DDB^{\leq m}_X \xrightarrow{\psi_m} \cG^m[-m]
\]
is zero as the last term lives in $D^{>m}(X,\bC)$, we get a factorisation:
\[
\bC \to \cF^{\leq m} \xrightarrow{u_m} \DDB^{\leq m}_X
\]
by tracing through the second column of (\ref{eq:BigPhiPsi}). 
In turn, given that $X$ is projective, the $E_1$-degeneration of the Hodge-to-de Rham spectral sequence for the filtered Deligne-Du Bois complex of $X$ implies that the composition
\[
H^i(X, \bC) \to \bH^i(X, \cF^{\leq m}) \xrightarrow{u_m} \bH^i(X, \DDB^{\leq m}_X)
\]
is surjective for every $i$. In particular, the second map $u_m \colon \bH^i(X, \cF^{\leq m}) \to \bH^i(X, \DDB^{\leq m}_X)$ is surjective for every $i$ as well.

Finally, by tracing through the following diagram:
\[
\begin{tikzcd}
\bH^{i-1}(X,\cF^{\leq m-1}) \ar{d}{\cong} \ar{r} & \bH^i(X, \cF^m[-m]) \ar{r}{\gamma_m} \ar{d}{v_m} & \bH^i(X, \cF^{\leq m}) \ar[two heads]{d}{u_m} \ar{r} & \bH^i(X, \cF^{\leq m-1}) \ar{d}{\cong} \\
 \bH^{i-1}(X, \DDB^{ \leq m-1}_X)  \ar{r} & \bH^i(X, \DDB^m_X[-m]) \ar{r}{\theta_m} & \bH^i(X, \DDB^{\leq m}_X) \ar{r} & \bH^i(X, \DDB^{ \leq m-1}_X).
\end{tikzcd}
\]
we get that the map:
\[
v_m \colon \bH^i(X, \cF^m[-m]) \to \bH^i(X, \DDB^m_X[-m])
\]
is surjective for every $i \in \bZ$ as required. Note that in the above proof we implicitly used the fundamental fact that the cohomology of a coherent sheaf is the same as when it is computed in the category of sheaves of abelian groups.
\end{proof}
Now that we have proven the claim, we get from tracing through the diagram
\[
\begin{tikzcd}
\bH^{i-1}(X \, \backslash \, \m, \cF^m)  \ar{r} \ar{d}{\cong} & \bH^i_{\m}(\cF^m) \ar{r} \ar{d}{v_m} & \bH^i(X, \cF^m) \ar{r} \ar[two heads]{d}{v_m} & \bH^{i}(X \, \backslash \, \m, \cF^m) \ar{d}{\cong}   \\
\bH^{i-1}(X \, \backslash \, \m, \DDB^m_X)  \ar{r} & \bH^i_{\m}(\DDB^m_X) \ar{r} & \bH^i(X , \DDB^m_X) \ar{r} & \bH^{i}(X\, \backslash \, \m, \DDB^m_X)
% \cH^{i-1}(M) \ar{r}{=} & \bH^i_{\m}(\DDB^m_X) \ar{r} & \bH^i(X, \DDB^m_X) \ar{r} & \bH^{i}(X \, \backslash \, \m, \DDB^m_X)
\end{tikzcd}
\]
that 
\[
v_m \colon H^i_\m(\cF^m) \to H^i_\m(\DDB^m_{X})
\]
is surjective for every $i$. By excision and the fact that (\ref{eq:isoonU}) is an isomorphism on $U$, we have that $H^i_\m(\cF^m) \cong H^i_\m(\Omega^m_{X,h})$, and so the surjectivity of (\ref{eq:surj-lc-goal}) is proven as required.
\end{proof}

\subsection{Higher local cohomology of reflexified cotangent sheaf}
We conclude this section by the proof of Lemma \ref{lem:small-introduction} from introduction.

\begin{proof}[{Proof of Lemma \ref{lem:small-introduction}}]
Let $H$ be a general hyperplane. Since $X$ has rational singularities, so does $H$, and hence we have that 
\[
\Omega^i_{X,h} = \Omega^{[i]}_X \quad \text{ and } \quad \Omega^i_{H,h} = \Omega^{[i]}_H
\]
for all integers $i$ by Proposition \ref{prop:KebekusSchnell}.

Consider the following diagram
\[
\DDB^{m-1}_H \otimes^L_{\cO_H} \cO_H(-H) \to \DDB^{m}_X\otimes^L_{\cO_X}\!\cO_H \to \DDB^m_H \xrightarrow{+1} 
\]
from  Corollary \ref{cor:restrictionDDB-and-IO}. Since $X$ is pre-$(m-1)$-Du Bois, so is $H$ by Lemma \ref{cor:pre-k-Du Bois-restriction} (1). Thus $\cH^{>0}(\DDB^{m-1}_H)=0$, and so by applying $\cH^0$ to the above diagram, we get the following short exact sequence: 
\[
0 \to \Omega^{[m-1]}_H(-H) \to \Omega^{[m]}_X\otimes_{\cO_X}\!\cO_H \to \Omega^{[m]}_H \to 0.
\]
Here we are also using that $\cH^0(\DDB^{m}_X\otimes^L_{\cO_X}\!\cO_H) \cong \cH^0(\DDB^{m}_X) \otimes_{\cO_X}\!\cO_H$, see Lemma \ref{lem:general-res-coh}. In particular, for every closed point $x \in X$ we have that
\[
H^1_x(\Omega^{[m]}_X\otimes_{\cO_X}\!\cO_H)=0,
\]
and so, by local duality (\ref{eq:local-duality}),
\[
\cExt^{-1}( \Omega^{[m]}_X\otimes_{\cO_X}\!\cO_H,\omega^\kdot_X) = 0.
\]
Thanks to the calculation in \eqref{eq:ext-restriction} and the fact that $(-)\otimes^L_{\cO_X} \cO_H$ can be replaced by $(-)\otimes_{\cO_X} \cO_H$ by generality of $H$ (see Lemma \ref{lem:general-res-coh} again), we have that
\[
0 = \cExt^{-1}( \Omega^{[m]}_X\otimes_{\cO_X}\!\cO_H,\omega^\kdot_X) = \cExt^{-1}( \Omega^{[m]}_X\otimes^L_{\cO_X}\!\cO_H,\omega^\kdot_X) = \cExt^{-2}(\Omega^{[m]}_X, \omega^\kdot_X) \otimes_{\cO_X}\!\cO_H. 
\]
Therefore $\cExt^{-2}(\Omega^{[m]}_X, \omega^\kdot_X)$ is of dimension $0$. This concludes the proof.
\end{proof}

\section{Higher rational singularities}

Following \cite{SVV} we recall the following definition.

\begin{definition}
Let $X$ be a normal connected variety defined over $\bC$. We say that $X$ is \emph{pre-$m$-rational} for an integer $m\geq 0$ if
\[
\cH^{>0} \DO^i_X = 0
\]
for all integers $0 \leq i \leq m$.
\end{definition}

Recall from Proposition \ref{prop:dualDDB-explicit} that $\DO^i_X = Rf_*\Omega^i_Y(\log E)$ for a strong resolution $f \colon Y \to X$ with exceptional divisor $E$ if $i < {\rm codim}_X({\rm Sing}(X))$. In particular, $0$-rationality is equivalent to rationality. Thus, in view of Proposition \ref{prop:KebekusSchnell} we get the following.

\begin{lemma} \label{lem:equiv-def-pre-k-rat}
Let $X$ be a normal connected variety defined over $\bC$. Fix an integer $m\geq 0$. If $X$ is pre-$m$-rational, then it has rational singularities and $\cH^0(\DO^i_X) = \Omega^{[i]}_X$ for every $i \in \bZ$.

In particular, $X$ is pre-$m$-rational if and only if $\DO^i_X \cong \Omega^{[i]}_X$ for all $0 \leq i \leq m$.
\end{lemma}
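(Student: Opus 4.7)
The proof is essentially a bookkeeping exercise that chains Proposition \ref{prop:dualDDB-explicit} and Proposition \ref{prop:KebekusSchnell} with the definition of pre-$m$-rationality, so I expect no serious obstacle. The plan is first to extract rationality from pre-$m$-rationality by specialising to $i=0$, then to upgrade this via the Kebekus--Schnell extension theorem already proven in the paper.

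First I would argue that $X$ has rational singularities. Pick a strong resolution $f\colon Y\to X$ with exceptional divisor $E$. Since $X$ is normal, $\codim_X(\mathrm{Sing}(X))\geq 2$, so Proposition \ref{prop:dualDDB-explicit} applied with $i=0$ (which satisfies $0<\codim_X(\mathrm{Sing}(X))$ automatically) yields an isomorphism $\DO^0_X\cong Rf_*\cO_Y$. The assumption that $X$ is pre-$m$-rational with $m\geq 0$ forces $\cH^{>0}(\DO^0_X)=0$, and combined with $f_*\cO_Y=\cO_X$ from normality, this gives $Rf_*\cO_Y\cong\cO_X$, which is the definition of rational singularities.

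Once rationality is in hand, Proposition \ref{prop:KebekusSchnell} immediately delivers $\cH^0(\DO^i_X)=\Omega^{[i]}_X$ for every $i\geq 0$. For $i<0$ or $i>d$ both sides vanish (since $\DDB^{d-i}_X=0$ outside $0\leq d-i\leq d$, so $\DO^i_X=0$ there as well, and $\Omega^{[i]}_X=0$ in the same range), so the identity holds for every $i\in\bZ$. This establishes the first assertion of the lemma.

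For the ``in particular'' clause, the forward implication combines the definition of pre-$m$-rationality, namely $\cH^{>0}(\DO^i_X)=0$ for $0\leq i\leq m$, with the equality $\cH^0(\DO^i_X)=\Omega^{[i]}_X$ just established, to conclude that $\DO^i_X$ is concentrated in degree zero and isomorphic to $\Omega^{[i]}_X$. The reverse implication is tautological: if $\DO^i_X\cong\Omega^{[i]}_X$ for $0\leq i\leq m$, then $\cH^{>0}(\DO^i_X)=0$ in that range, which is exactly the definition of pre-$m$-rationality. The only point to verify carefully is that the codimension hypothesis required by Proposition \ref{prop:dualDDB-explicit} holds at the relevant step, and this is automatic for $i=0$ from normality of $X$.
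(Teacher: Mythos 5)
Your argument is correct and follows essentially the same route as the paper, which likewise deduces rationality from the $i=0$ case of Proposition \ref{prop:dualDDB-explicit} (so that pre-$0$-rationality means $Rf_*\cO_Y\cong\cO_X$) and then invokes Proposition \ref{prop:KebekusSchnell} for $\cH^0(\DO^i_X)=\Omega^{[i]}_X$. The only citation left implicit in your ``in particular'' step is that $\DO^i_X\in D^{\geq 0}$ (Proposition \ref{prop:dualDDB-explicit}(1), resp.\ Lemma \ref{lemma:DOnon-normal}), which is needed to conclude that vanishing of $\cH^{>0}$ together with $\cH^0(\DO^i_X)=\Omega^{[i]}_X$ forces $\DO^i_X$ to be concentrated in degree zero.
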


Pre-$m$-rationality imposes strong restrictions on depths of differential forms. The following result is well-known; we provide a proof for the convenience of the reader.

\begin{proposition}\label{prop:weakly-k-rational-depth}
Let $m$ be a non-negative integer and let $X$ be a pre-$m$-rational normal connected variety defined over $\bC$ and of dimension $d$. Then
\[
H^j_\m(\Omega^{[i]}_X) = 0
\]
for every maximal ideal $\m$ and all $i, j \geq 0$ such that $i+j < d$ and $i \leq m$.
\end{proposition}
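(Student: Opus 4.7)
The proof should be essentially immediate from two results already established in the paper, so the plan is short.

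My plan is to combine Lemma \ref{lem:equiv-def-pre-k-rat} with Lemma \ref{lem:loccohdual-and-int}. First, I would invoke Lemma \ref{lem:equiv-def-pre-k-rat}: since $X$ is pre-$m$-rational, we have the identification
\[
\DO^i_X \cong \Omega^{[i]}_X \quad \text{for all } 0 \leq i \leq m.
\]
This reduces the desired vanishing $H^j_\m(\Omega^{[i]}_X) = 0$ (for $i \leq m$ and $i+j < d$) to the corresponding vanishing for $\DO^i_X$.

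Second, I would apply Lemma \ref{lem:loccohdual-and-int}, which states precisely that $\DO^i_X$ has perverse cohomological amplitude $\geq d-i$, or equivalently
\[
H^j_\m(\DO^i_X) = 0 \quad \text{for every maximal ideal } \m \text{ and all } i, j \geq 0 \text{ with } i+j < d.
\]
Combining these two statements yields the conclusion directly; no induction or further analysis is required. There is essentially no obstacle here — the real content has already been packaged into Lemma \ref{lem:loccohdual-and-int} (left $t$-exactness of $\GrDR$ applied to $\bD_X(\bQ_X^H[d]) \in D^{\geq 0}_{\rm MHM}(X)$) and into Lemma \ref{lem:equiv-def-pre-k-rat} (which follows from Proposition \ref{prop:KebekusSchnell} via rationality). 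The proposition is simply the combination of these two facts, so I would write the proof as a single displayed chain of equalities and citations.
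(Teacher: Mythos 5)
Your proposal is correct and is exactly the paper's own argument: the paper's proof also combines Lemma \ref{lem:equiv-def-pre-k-rat} (giving $\DO^i_X \cong \Omega^{[i]}_X$ for $i \leq m$) with Lemma \ref{lem:loccohdual-and-int} (giving $H^j_\m(\DO^i_X)=0$ for $i+j<d$) in a single line. No gaps.
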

\begin{proof}
By Lemma \ref{lem:equiv-def-pre-k-rat} and Proposition \ref{lem:loccohdual-and-int} we get that
\[
H^j_\m(\Omega^{[i]}_X) = H^j_\m(\DO^i_X) = 0
\]
for every prime ideal $\m$ and all $i, j \geq 0$ such that $i+j < d$ and $i \leq m$. This concludes the proof. 
\end{proof}

Next, we introduce the following tentative definition. We say that a normal connected variety $X$ over $\bC$ is \emph{pre-$m$-$\IC$-rational} if
\[
\cH^{>0}(\IO^i_X)=0
\]
for all integers $0 \leq i \leq m$. Note that it is proven in \cite{popa2024injectivityvanishingdubois} (see also \cite{DOR25} and \cite{PP24}) that pre-$m$-$\IC$-rationality is equivalent to pre-$m$-rationality. However, to maximise the self-containedness of this article, we will invoke this equivalence until Corollary \ref{cor:lc-def-ofkRational}.

Since $\IO^0_X \cong Rf_*\cO_Y$ for a resolution of singularities $f \colon Y \to X$ (see Proposition \ref{prop:all-properties-of-IO}) we see that $X$ is pre-$0$-$\IC$-rational if and only if it has rational singularities. Thus, in view of Proposition \ref{prop:KebekusSchnell} we get the following.

\begin{lemma} \label{lem:equiv-def-pre-k-IC-rat}
Let $X$ be a normal variety defined over $\bC$. Fix an integer $m\geq 0$. If $X$ is pre-$m$-$\IC$-rational, then it has rational singularities and $\cH^0(\IO^i_X) = \Omega^{[i]}_X$ for every $i \in \bZ$. In particular, $X$ is pre-$m$-$\IC$-rational if and only if $\IO^i_X \cong \Omega^{[i]}_X$ for all $0 \leq i \leq m$.
\end{lemma}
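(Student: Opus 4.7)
The plan is to mirror the proof of Lemma \ref{lem:equiv-def-pre-k-rat} almost verbatim, replacing the role of $\DO^i_X$ with $\IO^i_X$. The key inputs are Proposition \ref{prop:all-properties-of-IO}(1), which identifies $\IO^0_X$ with $Rf_*\cO_Y$ for a resolution $f\colon Y\to X$, and Proposition \ref{prop:KebekusSchnell}, which computes $\cH^0(\IO^i_X)$ for varieties with rational singularities.

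First I would reduce the first assertion to the case $m=0$. Indeed, if $X$ is pre-$m$-$\IC$-rational, then in particular $\cH^{>0}(\IO^0_X)=0$. By Proposition \ref{prop:all-properties-of-IO}(1), $\IO^0_X = Rf_*\cO_Y$ for any resolution $f\colon Y\to X$, so this vanishing is exactly the definition of rational singularities. Once rationality is established, Proposition \ref{prop:KebekusSchnell} immediately yields the identification
\[
\cH^0(\IO^i_X) \cong \Omega^{[i]}_X \qquad \text{for every } i \geq 0,
\]
proving the first assertion.

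For the ``in particular'' part, the reverse implication is trivial: if $\IO^i_X \cong \Omega^{[i]}_X$ for $0 \leq i \leq m$, then $\IO^i_X$ is a sheaf concentrated in degree $0$, so $\cH^{>0}(\IO^i_X)=0$ in this range, which is the definition of pre-$m$-$\IC$-rational. For the forward implication, suppose $X$ is pre-$m$-$\IC$-rational. By the first assertion, $\cH^0(\IO^i_X) = \Omega^{[i]}_X$. Combined with the defining vanishing $\cH^{>0}(\IO^i_X)=0$ for $0 \leq i \leq m$, and the fact that $\IO^i_X \in D^{\geq 0}_{\rm coh}(X)$ (which holds since $\IO^i_X$ is a direct summand of $Rf_*\Omega^i_Y$ by Proposition \ref{prop:all-properties-of-IO}(4)), we conclude that $\IO^i_X$ is quasi-isomorphic to the single sheaf $\Omega^{[i]}_X$ placed in degree $0$ for $0 \leq i \leq m$.

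There is no genuine obstacle here; the statement is a direct parallel of Lemma \ref{lem:equiv-def-pre-k-rat}, and both halves follow formally from the two cited propositions once one observes that the lowest piece $\IO^0_X$ detects rationality exactly as $\DO^0_X$ does.
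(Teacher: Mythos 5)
Your proposal is correct and follows essentially the same route as the paper: the paper deduces the lemma from the identification $\IO^0_X \cong Rf_*\cO_Y$ (Proposition \ref{prop:all-properties-of-IO}(1)), which shows pre-$0$-$\IC$-rationality is equivalent to rational singularities, and then invokes Proposition \ref{prop:KebekusSchnell} for $\cH^0(\IO^i_X)=\Omega^{[i]}_X$, exactly as you do. Your additional remarks on $\IO^i_X \in D^{\geq 0}$ and the trivial reverse implication are consistent with the paper's (implicit) argument.
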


\begin{lemma} \label{lem:prekIC-implies-prekDDB}
Let $X$ be a normal connected variety defined over $\bC$. Fix an integer $m \geq 0$ and assume that $X$ is pre-$m$-$\IC$-rational. Then $X$ is pre-$m$-Du Bois.
\end{lemma}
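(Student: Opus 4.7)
The plan is to verify the local cohomology criterion of Proposition \ref{prop:lc-def-ofkDuBois}: it suffices to show that for every maximal ideal $\m$ and all integers $i, j \geq 0$ with $i \leq m$ and $i + j \leq d$, the natural map
\[
H^j_\m(\Omega^i_{X,h}) \to H^j_\m(\DDB^i_X)
\]
is injective.

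The key observation is that pre-$m$-$\IC$-rationality supplies a canonical retraction of the map $\Omega^i_{X,h} \to \DDB^i_X$. Indeed, by Lemma \ref{lem:equiv-def-pre-k-IC-rat} the variety $X$ has rational singularities and $\IO^i_X \cong \Omega^{[i]}_X$ for $0 \leq i \leq m$. Combined with Proposition \ref{prop:KebekusSchnell}, which identifies $\Omega^i_{X,h}$ with $\cH^0(\IO^i_X) = \Omega^{[i]}_X$ through the natural map under the rationality hypothesis, I find that the composition of natural maps arising from the factorisation \eqref{eq:fact} in Remark \ref{rem:fact},
\[
\Omega^i_{X,h} \to \DDB^i_X \to \IO^i_X,
\]
coincides with the identity $\Omega^{[i]}_X \xrightarrow{\sim} \Omega^{[i]}_X$ for $0 \leq i \leq m$.

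Consequently, the natural map $\Omega^i_{X,h} \to \DDB^i_X$ is a split monomorphism in $D^b_{\rm coh}(X)$, retracted by $\DDB^i_X \to \IO^i_X \cong \Omega^i_{X,h}$. Applying $R\Gamma_\m(-)$ and passing to $j$-th cohomology preserves the splitting, so the induced map $H^j_\m(\Omega^i_{X,h}) \to H^j_\m(\DDB^i_X)$ is a split monomorphism of abelian groups, and in particular injective, for every $j \geq 0$. Proposition \ref{prop:lc-def-ofkDuBois} then immediately yields that $X$ is pre-$m$-Du Bois.

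Notably, the argument bypasses any induction on $m$ and any delicate analysis of the higher cohomology sheaves of $\DDB^m_X$; the entire content lies in the retraction furnished by $\IO^i_X$. The only point that I expect to require genuine care is the verification that the composition $\Omega^i_{X,h} \to \DDB^i_X \to \IO^i_X$ truly coincides with the natural identification of Proposition \ref{prop:KebekusSchnell}, rather than some other endomorphism of $\Omega^{[i]}_X$. However, since $\Omega^i_{X,h}$ is concentrated in degree $0$, any map out of it into a complex in $D^{\geq 0}$ is determined by its effect on $\cH^0$, and both the composition in question and the map of Proposition \ref{prop:KebekusSchnell} are the ones induced from the natural factorisation \eqref{eq:fact}; so the check reduces to tracking the natural maps on $\cH^0$, which is exactly what Proposition \ref{prop:KebekusSchnell} records.
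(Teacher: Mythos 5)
Your proposal is correct and follows essentially the same route as the paper: both identify $\Omega^i_{X,h} = \cH^0(\IO^i_X) = \Omega^{[i]}_X$ via Lemma \ref{lem:equiv-def-pre-k-IC-rat} and Proposition \ref{prop:KebekusSchnell}, use the factorisation $\Omega^i_{X,h} \to \DDB^i_X \to \IO^i_X$ from \eqref{eq:fact} together with $\IO^i_X \cong \Omega^{[i]}_X$ to get injectivity of $H^j_\m(\Omega^i_{X,h}) \to H^j_\m(\DDB^i_X)$, and conclude by Proposition \ref{prop:lc-def-ofkDuBois}. Your extra remark that the composite is the natural identification (checked on $\cH^0$ since $\IO^i_X$ is a sheaf in this situation) is a fair elaboration of a point the paper leaves implicit, not a different argument.
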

\begin{proof}
Since $X$ is pre-$m$-$\IC$-rational, it has rational singularities (Lemma \ref{lem:equiv-def-pre-k-IC-rat}), and we have that $\Omega^i_{X,h} = \cH^0(\IO^i_X)  = \Omega^{[i]}_X$ for all $i$ by Proposition \ref{prop:KebekusSchnell}.  Now suppose $0 \leq i \leq m$. By \eqref{eq:fact} we have a factorisation:
\[
\Omega^{[i]}_{X} \to \DDB^i_X \to \IO^i_X.
\]
Since $\IO^i_X \cong \Omega^{[i]}_X$, we get that
\[
H^j_\m(\Omega^{i}_{X,h}) = H^j_\m(\Omega^{[i]}_{X}) \to  H^j_\m(\DDB^i_X)
\]
is injective for all integers $j$. Thus $X$ is pre-$m$-Du Bois by Proposition \ref{prop:lc-def-ofkDuBois}.
\end{proof}

\begin{lemma}[{cf.\ \cite[Theorem A]{SVV}}] \label{cor:pre-k-rational-restriction}
Let $X$ be a normal quasi-projective variety over $\bC$ and let $H$ be a general member of a base-point-free linear system associated to a line bundle $L$. Fix an integer $m \geq 0$. Then the following statements hold.
\begin{enumerate}
\item If $X$ is pre-$m$-$\IC$-rational, then $H$ is pre-$m$-$\IC$-rational.
\item If $H$ is pre-$m$-$\IC$-rational and $L$ is ample, then the support of $\cH^{>0}\IO^m_X$ is of dimension $0$.
\end{enumerate}
\end{lemma}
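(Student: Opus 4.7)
The plan is to mirror the proof of Lemma \ref{cor:pre-k-Du Bois-restriction} almost verbatim, replacing $\DDB^i$ with $\IO^i$ throughout and feeding in the second exact triangle of Corollary \ref{cor:restrictionDDB-and-IO}:
\[
\IO^{i-1}_H \otimes_{\cO_H} \cO_H(-H) \to \IO^{i}_X \otimes^L_{\cO_X} \cO_H \to \IO^i_H \xrightarrow{+1}
\]
in place of the first one. Throughout, I will freely use Lemma \ref{lem:equiv-def-pre-k-IC-rat} to convert the vanishing statements $\cH^{>0}(\IO^i_\star)=0$ into the concrete identifications $\IO^i_\star \cong \Omega^{[i]}_\star$.

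For Part (1), I would run an ascending induction on $m$. The base case $m=0$ follows from the triangle at $i=0$: since $\IO^{-1}_H=0$, it collapses to $\IO^0_H \cong \IO^0_X \otimes^L_{\cO_X} \cO_H \cong \cO_H$, so rationality descends from $X$ to $H$. For the inductive step, the induction hypothesis and Lemma \ref{lem:equiv-def-pre-k-IC-rat} give $\IO^{m-1}_H \cong \Omega^{[m-1]}_H$ concentrated in degree $0$, while pre-$m$-$\IC$-rationality of $X$ gives $\IO^m_X \cong \Omega^{[m]}_X$. Plugging $i=m$ into the triangle, the first term $\Omega^{[m-1]}_H(-H)$ is a sheaf (since $\cO_H(-H)$ is a line bundle), and the middle term $\Omega^{[m]}_X \otimes^L_{\cO_X} \cO_H$ is a sheaf by Lemma \ref{lem:general-res-coh} applied to the general defining equation of $H$. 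The long exact sequence of cohomology, combined with $\IO^m_H \in D^{\geq 0}_{\rm coh}$ from Lemma \ref{lem:DDB-IO-coh-amplitude}, then forces $\cH^{>0}(\IO^m_H)=0$, which is exactly what we want.

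For Part (2), I would use the triangle only once. Pre-$m$-$\IC$-rationality of $H$ makes both the first and third terms of the triangle for $i=m$ concentrated in degree $0$, so $\cH^{>0}(\IO^m_X \otimes^L_{\cO_X} \cO_H)=0$. Invoking Lemma \ref{lem:general-res-coh} in the opposite direction gives $\cH^k(\IO^m_X) \otimes_{\cO_X} \cO_H = 0$ for every $k>0$, meaning the support of $\cH^{>0}\IO^m_X$ is disjoint from $H$. Ampleness of $L$ then closes the argument: by a standard Bertini-type observation, a general member of a base-point-free linear system associated to an ample line bundle meets every positive-dimensional closed subvariety of $X$, so the support in question must be $0$-dimensional.

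There is no serious obstacle here. The only point worth checking is that the genericity hypothesis of Lemma \ref{lem:general-res-coh} holds, which boils down to a general $H$ avoiding the finitely many associated primes of the (finitely many) nonzero sheaves $\cH^k(\IO^m_X)$ and $\cH^k(\IO^{m-1}_H)$ — automatic for any base-point-free linear system. The entire argument is essentially formal once the correct exact triangle has been identified; the real conceptual input sits inside Corollary \ref{cor:restrictionDDB-and-IO} and the already-established sheaf-theoretic properties of $\IO^i$.
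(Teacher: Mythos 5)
Your argument is correct and takes essentially the same route as the paper: the same exact triangle from Corollary \ref{cor:restrictionDDB-and-IO}, ascending induction on $m$ for (1), and generality of $H$ via Lemma \ref{lem:general-res-coh} together with ampleness for (2). The only (harmless) difference is that in (1) the paper does not need Lemma \ref{lem:equiv-def-pre-k-IC-rat} or the generality of $H$ at that step, since $\cH^{>0}(\IO^{m}_X)=0$ already gives $\cH^{>0}(\IO^{m}_X\otimes^L_{\cO_X}\cO_H)=0$ by right exactness of the derived tensor product.
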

\begin{proof}
Fix an integer $0\leq i \leq m$. We shall use the following exact triangle from Corollary  \ref{cor:restrictionDDB-and-IO}:
\begin{equation} \label{eq:restriction-again}
\IO^{i-1}_{H} \otimes^L_{\cO_H} \cO_H(-H) \to \IO^{i}_X\otimes^L_{\cO_X} \cO_H \to \IO^i_{H} \xrightarrow{+1}.
\end{equation}  

We prove Part (1) by ascending induction on $m$. Assume that $X$ is pre-$m$-$\IC$-rational. In particular, it is pre-$(m-1)$-$\IC$-rational, and so by induction we may assume that $H$ is pre-$(m-1)$-$\IC$-rational. Thus, we have that
\[
\cH^{>0}(\IO^{m-1}_{H}) = 0 \quad \text{ and } \quad \cH^{>0}(\IO^{m}_X) = 0.
\]
By right exactness of tensor product, the latter implies that $\cH^{>0}(\IO^{m}_X\otimes^L_{\cO_X} \cO_H) = 0$. 
Therefore, by \eqref{eq:restriction-again}, we get $\cH^{>0}(\IO^m_{H}) = 0$ as well, concluding the proof of Part (1).

As for Part (2), suppose that $H$ is pre-$m$-$\IC$-rational. Then
\[
\cH^{>0}(\IO^{i-1}_{H}) = \cH^{>0}(\IO^i_{H}) = 0
\]
for all $i \leq m$, and so by \eqref{eq:restriction-again} we get that:
\[
\cH^{>0}(\IO^{i}_X\otimes^L_{\cO_X} \cO_H) = 0
\]
for all $i \leq m$. Since $H$ is general and $L$ is ample, Lemma \ref{lem:general-res-coh} implies that 
\[
\dim \Supp(\cH^{>0}(\IO^i_X))=0. \qedhere
\]
\end{proof}
In the case of usual pre-$m$-rationality, the above result was obtained in \cite{SVV}.

\subsection{Definition of higher rationality via local cohomology}

First, we recall a key result from \cite{popa2024injectivityvanishingdubois}, and, for the convenience of the reader, append a proof thereof.
\begin{theorem}[{\cite[Theorem 10.5 (Sung Gi Park)]{popa2024injectivityvanishingdubois}}] \label{lem:surj-lc-k-rational}
Let $X$ be a normal connected variety defined over $\bC$ of dimension $d$. Fix an integer $m \geq 0$ and assume that the natural map $\DDB^i_X \to \IO^i_X$ is an isomorphism for all $0 \leq i \leq m-1$. Then the map
\[
H^j_\m(\DDB^m_{X}) \xrightarrow{\phi} H^j_\m(\IO^m_{X})
\]
is surjective for every maximal ideal $\m$ and every $j \geq 0$. Specifically:
\begin{enumerate}
    \item $H^j_\m(\IO^m_X) = 0$ for $j < d-m$,
    \item $\phi$ is a surjection for $j=d-m$, and
    \item $\phi$ is an isomorphism for $j>d-m$.
\end{enumerate}
\end{theorem}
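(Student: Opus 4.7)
The plan is to realise the natural map $\DDB^m_X \to \IO^m_X$ from Remark \ref{rem:fact} as the image under $\Gr_{-m}\DR$ of the Hodge-module morphism $\phi \colon \bQ^H_X[d] \to \IC^H_X$, and to control its fiber via the sharpening of right $t$-exactness of $\GrDR$ provided by Lemma \ref{lem:grdr-boundary}(2).

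Concretely, I would form the exact triangle
\[
K \to \bQ^H_X[d] \xrightarrow{\phi} \IC^H_X \xrightarrow{+1}
\]
in $D^b_{\rm MHM}(X)$. Since $\bQ^H_X[d] \in D^{\leq 0}_{\rm MHM}(X)$ by (\ref{eq:trivial-q-leq0}) and $\IC^H_X$ lies in the heart, the long exact sequence of Hodge-module cohomology together with the surjectivity of $\cH^0(\phi)\colon \cH^0(\bQ^H_X[d]) \twoheadrightarrow \IC^H_X$ from (\ref{IC2}) shows that $K \in D^{\leq 0}_{\rm MHM}(X)$. Next I would verify that $\Gr_i\DR(K) = 0$ for all $i > -m$: for $i > 0$ both $\Gr_i\DR(\bQ^H_X[d])$ and $\Gr_i\DR(\IC^H_X)$ vanish by Proposition \ref{prop:JakubsFavouriteFunctor}(6), (\ref{eq:GRDRTrivial}), and Definition \ref{def:io}; for $-(m-1) \leq i \leq 0$ the hypothesis $\DDB^{-i}_X \cong \IO^{-i}_X$ forces the induced map between the corresponding graded pieces of $\bQ^H_X[d]$ and $\IC^H_X$ to be an isomorphism. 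Lemma \ref{lem:grdr-boundary}(2) applied at level $k = -m$ then yields
\[
\Gr_{-m}\DR(K) \in {}^pD^{\leq 0}_{\rm coh}(X).
\]

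Applying $\Gr_{-m}\DR$ to the triangle above and shifting by $[m-d]$ produces an exact triangle in $D^b_{\rm coh}(X)$:
\[
C \to \DDB^m_X \xrightarrow{\phi_m} \IO^m_X \xrightarrow{+1}, \qquad C \coloneqq \Gr_{-m}\DR(K)[m-d] \in {}^pD^{\leq d-m}_{\rm coh}(X),
\]
where $\phi_m$ is the canonical map of Remark \ref{rem:fact}. By Remark \ref{rem:perv-coh-maximal-only}, the perverse bound on $C$ gives $H^j_\m(C) = 0$ for every maximal ideal $\m$ and every $j > d-m$. The long exact local cohomology sequence
\[
H^j_\m(C) \to H^j_\m(\DDB^m_X) \xrightarrow{\phi_m} H^j_\m(\IO^m_X) \to H^{j+1}_\m(C)
\]
then immediately yields surjectivity at $j = d-m$ (claim (2)) and an isomorphism at $j > d-m$ (claim (3)). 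Claim (1), $H^j_\m(\IO^m_X) = 0$ for $j < d-m$, is exactly Lemma \ref{lem:loccohdual-and-int}, and in that range surjectivity of $\phi_m$ is tautological.

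The only non-routine step is to identify the correct Hodge-theoretic avatar of $\DDB^m_X \to \IO^m_X$: once we recognise that its fiber is $\Gr_{-m}\DR$ of a cone $K \in D^{\leq 0}_{\rm MHM}(X)$ whose lower graded $\DR$ pieces vanish by the inductive hypothesis, Lemma \ref{lem:grdr-boundary}(2) does all of the heavy lifting, and no further analysis of $K$ itself is needed.
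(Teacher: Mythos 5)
Your proposal is correct and is essentially the paper's own argument: both form the (co)cone of $\bQ^H_X[d]\to \IC^H_X$, show it lies in $D^{\leq 0}_{\rm MHM}(X)$ using \eqref{eq:trivial-q-leq0} and \eqref{IC2}, kill its graded de Rham pieces above level $-m$ via the hypothesis, and then apply Lemma \ref{lem:grdr-boundary}(2) together with Remark \ref{rem:perv-coh-maximal-only} and Lemma \ref{lem:loccohdual-and-int} to conclude on local cohomology. The only cosmetic difference is that the paper first builds the cocone in ${\rm Perv}_{\rm cons}(X,\bQ)$ and lifts it to $D^b_{\rm MHM}(X)$ via the faithful $t$-exact functor ${\rm rat}$, whereas you work directly in $D^b_{\rm MHM}(X)$ (implicitly using the same ${\rm rat}$ argument to transfer \eqref{IC2}).
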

\begin{proof}
Consider the following exact triangle
\[
 C \to \bQ_X[d] \to \IC_X \xrightarrow{+1}
\]
where $C$ is the cocone of the composition  $\bQ_X[d] \to {}^p\cH^0(\bQ_X[d]) \to \IC_X$ from \eqref{eq:trivial-q-leq0} and \eqref{IC2}. By the same reference, the map ${}^p\cH^0(\bQ_X[d]) \to \IC_X$ is surjective, and so \eqref{eq:trivial-q-leq0} and the long exact sequence of perverse cohomology \eqref{eq:les-of-t-cohomology} imply that
\[
C \in {}^pD^{\leq 0}.
\]
The above exact triangle extends to an exact triangle in $D^b_{\rm MHM}(X)$: 
\[
 C^H \to \bQ^H_X[d] \to \IC^H_X  \xrightarrow{+1}
\]
with $C^H \in D^{\leq 0}$ by faithfullness and $t$-exactness of ${\rm rat}$ (see Subsection \ref{ss:HM}(1)(2)).  

Consider the induced exact triangles for every $i$:
\begin{equation} \label{eq:grdr-ses}
\Gr_{-i}\DR(C^H) \to \Gr_{-i}\DR(\bQ^H_X[d]) \to \Gr_{-i}\DR(\IC^H_X) \xrightarrow{+1}.
\end{equation}
By assumptions, the following map is an isomorphism for $i<m$:
\[
\DDB^i_X[d-i] = \Gr_{-i}\DR(\bQ^H_X[d]) \to \Gr_{-i}\DR(\IC^H_X) = \IO^i_X[d-i]
\]
(cf.\ (\ref{eq:GRDRTrivial}) and Definition \ref{def:io}). Therefore:
\begin{alignat*}{3}
\Gr_{>-m}\DR(C^H) &= 0, &&\text{ by \eqref{eq:grdr-ses}, thus:} \\
\Gr_{-m}\DR(C^H) &\in {}^pD^{\leq 0}\quad &&\text{ by $C^H \in D^{\leq 0}$ and  Lemma \ref{lem:grdr-boundary}(2), and so:} \\
R\Gamma_\m(\Gr_{-m}\DR(C^H)) &\in D^{\leq 0} &&\text{  by Remark \ref{rem:perv-coh-maximal-only}}.
\end{alignat*}
By \eqref{eq:grdr-ses}, we have an exact triangle (cf.\ \eqref{eq:GRDRTrivial} and Definition \ref{def:io}):
\[
R\Gamma_\m(\Gr_{-m}\DR(C^H)) \to R\Gamma_\m(\DDB^m_X[d-m]) \to R\Gamma_\m(\IO^m_X[d-m]) \xrightarrow{+1}.
\]

Given that $R\Gamma_\m(\Gr_{-m}\DR(C^H)) \in D^{\leq 0}$, this immediately implies that 
\[
H^j_\m(\DDB^m_{X}) \xrightarrow{\phi} H^j_\m(\IO^m_{X})
\]
is an isomorphism for $j>d-m$ and a surjection for $j=d-m$. Finally, $H^j_\m(\IO^m_{X})=0$ for $j<d-m$ by Lemma \ref{lem:loccohdual-and-int}. This concludes the proof.
\end{proof}

\begin{proposition} \label{prop:lc-def-ofkRational}
Let $X$ be a normal connected variety of dimension $d$ defined over $\bC$. Fix an integer $m \geq 0$. Then $X$ is pre-$m$-$\IC$-rational if and only if
\begin{equation} \label{eq:ass-lc-def-ofkRational}
H^j_\m(\Omega^i_{X,h}) \xrightarrow{\phi} H^j_\m(\IO^i_{X})
\end{equation}
is injective for every maximal ideal $\m$ and all $i, j \geq 0$ such that $i+j \leq d$ and $i \leq m$.
\end{proposition}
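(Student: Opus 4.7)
The plan is to mirror the structure of the proof of Proposition \ref{prop:lc-def-ofkDuBois}, combining the factorization $\Omega^i_{X,h} \to \DDB^i_X \to \IO^i_X$ of Remark \ref{rem:fact} with Sung Gi Park's surjectivity result (Theorem \ref{lem:surj-lc-k-rational}). For the forward direction, I simply note that pre-$m$-$\IC$-rationality yields $\Omega^i_{X,h} = \Omega^{[i]}_X = \IO^i_X$ for $0 \leq i \leq m$ via Lemma \ref{lem:equiv-def-pre-k-IC-rat} and Proposition \ref{prop:KebekusSchnell}, so $\phi$ becomes the identity and is certainly injective.

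For the backward direction, the first move is to observe that the factorization above forces injectivity of $H^j_\m(\Omega^i_{X,h}) \to H^j_\m(\DDB^i_X)$ in the prescribed range, so Proposition \ref{prop:lc-def-ofkDuBois} already yields that $X$ is pre-$m$-Du Bois, and in particular $\Omega^m_{X,h} = \DDB^m_X$. I then proceed by ascending induction on $m$, assuming $X$ is pre-$(m-1)$-$\IC$-rational so that $\DDB^i_X \cong \IO^i_X \cong \Omega^{[i]}_X$ for $0 \leq i \leq m-1$; the goal becomes showing that $\IO^m_X$ is concentrated in degree $0$.

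The central step is a secondary induction on $\dim X$ establishing that $\cH^{>0}(\IO^m_X)$ has $0$-dimensional support. After reducing to a quasi-projective neighbourhood of a given closed point, I take $H$ a general member of $|L^k|$ for $L$ ample and $k \gg 0$. Since $H$ inherits both the pre-$m$-Du Bois property (Lemma \ref{cor:pre-k-Du Bois-restriction}(1)) and pre-$(m-1)$-$\IC$-rationality (Lemma \ref{cor:pre-k-rational-restriction}(1)), the leftmost terms of the two restriction triangles in Corollary \ref{cor:restrictionDDB-and-IO} both become $\Omega^{[m-1]}_H(-H)$. Lemma \ref{lem:lc-restrict-to-divisor} transfers the injectivity of $\phi$ in degree $k$ on $X$ to injectivity of the middle vertical map in degree $k-1$; a five-lemma diagram chase on the long exact sequences of local cohomology (with the identity on the left) yields injectivity of $H^j_\m(\Omega^m_{H,h}) \to H^j_\m(\IO^m_H)$ for $j \leq d-m-1$. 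For $0 \leq i < m$ the hypothesis holds tautologically on $H$. By dimension induction, $H$ is pre-$m$-$\IC$-rational, and Corollary \ref{cor:pre-k-rational-restriction}(2) produces the desired $0$-dimensional support statement.

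Once finiteness of support is in hand, Theorem \ref{lem:surj-lc-k-rational} applies (via $\DDB^i_X \cong \IO^i_X$ for $i<m$) and gives surjectivity of $H^j_\m(\DDB^m_X) \to H^j_\m(\IO^m_X)$ for every $j \geq 0$, bijectivity for $j > d-m$, and vanishing of $H^j_\m(\IO^m_X)$ for $j<d-m$. Combined with the injectivity assumption this is an isomorphism in every degree. Letting $C$ be the cone of $\DDB^m_X \to \IO^m_X$, the pre-$m$-Du Bois property together with the fact that both $\cH^0(\DDB^m_X)$ and $\cH^0(\IO^m_X)$ equal $\Omega^{[m]}_X$ via the natural identification gives $\cH^{\leq 0}(C) = 0$, while $\cH^j(C) \cong \cH^j(\IO^m_X)$ for $j \geq 1$. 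The latter has finite support, so $H^j_\m(C) = \cH^j(C)_\m$ at each point of that support; the vanishing of all $H^j_\m(C)$ then forces $C = 0$, completing the induction. The principal obstacle is the five-lemma step, which requires a compatible map between the two restriction triangles of Corollary \ref{cor:restrictionDDB-and-IO}; the paper itself warns that this compatibility is not a priori clear, so the cleanest fix is to rederive both triangles uniformly from a derived-category variant of Proposition \ref{prop:restriction-hodge-module} applied to the natural map $\bQ^H_X[d] \to \IC^H_X$ of mixed Hodge modules.
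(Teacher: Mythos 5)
Your outer skeleton coincides with the paper's proof: the forward direction via Lemma \ref{lem:equiv-def-pre-k-IC-rat} and Proposition \ref{prop:KebekusSchnell}, the reduction to pre-$m$-Du Bois through the factorisation \eqref{eq:fact} and Proposition \ref{prop:lc-def-ofkDuBois}, ascending induction on $m$, and the final appeal to Theorem \ref{lem:surj-lc-k-rational} plus the injectivity hypothesis to get an isomorphism on local cohomology in every degree. The problem is the ``central step'' you insert in between: the secondary induction on $\dim X$ producing finiteness of $\Supp \cH^{>0}(\IO^m_X)$. Its five-lemma chase needs a commutative ladder between the two restriction triangles of Corollary \ref{cor:restrictionDDB-and-IO}, and these triangles come from genuinely different constructions (the $\DDB$ one via the $h$-sheaf property as in \cite{SVV}, the $\IO$ one via non-characteristic restriction of Hodge modules as in \cite{Park24}); the paper explicitly states that their compatibility is not verified and, concerning your proposed fix, that a derived-category variant of Proposition \ref{prop:restriction-hodge-module} has no readily available reference. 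So as written this step rests on an unproven compatibility, and your ``cleanest fix'' is itself an unproved extension, not a citation.

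Fortunately the step is superfluous. Unlike Theorem \ref{thm:surj-lc} in the Du Bois setting, Theorem \ref{lem:surj-lc-k-rational} requires no finiteness-of-support hypothesis --- its only input is that $\DDB^i_X \to \IO^i_X$ is an isomorphism for $0 \leq i \leq m-1$, which you already have from the inductive hypothesis together with pre-$m$-Du Bois-ness (both identify with $\Omega^{[i]}_X$). This is precisely why the paper's proof never restricts to a hyperplane here, and it is what the remark after the proposition alludes to when it says one \emph{could} instead mimic Proposition \ref{prop:lc-def-ofkDuBois} at the cost of a longer argument; you have effectively done both at once, importing the complications of the longer route while also using the theorem that renders them unnecessary. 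Deleting the secondary induction forces only one change at the end: instead of using finite support to identify $H^j_\m(C)$ with $\cH^j(C)_\m$, note that once $H^j_\m(\Omega^m_{X,h}) \to H^j_\m(\IO^m_X)$ is an isomorphism for all $j$ and all maximal ideals $\m$, the cone $C$ satisfies $R\Gamma_\m(C)=0$ for every $\m$, and local duality \eqref{eq:local-duality} together with Grothendieck biduality already gives $C=0$; no support condition is needed. With those two modifications your argument becomes the paper's proof.
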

\noindent One can also argue by avoiding Theorem \ref{lem:surj-lc-k-rational} and instead mimicking the proof of Proposition \ref{prop:lc-def-ofkDuBois}. However, such a proof is a bit longer and more complicated.
\begin{proof}
First, assume that $X$ is pre-$m$-$\IC$-rational. Then, Lemma \ref{lem:equiv-def-pre-k-IC-rat} implies that $\Omega^i_{X,h} = \Omega^{[i]}_X$  and the map 
\[
\Omega^i_{X,h} = \Omega^{[i]}_{X} \to \IO^i_X
\]
is an isomorphism for all $i \leq m$. Therefore, it is an isomorphism on local cohomology. This concludes the proof from left to right.\\

For the implication from right to left, assume that the map $\phi$ from \eqref{eq:ass-lc-def-ofkRational} is injective in the required range. For $m=0$, we have that $\IO^0_X = Rf_*\cO_Y$ for a resolution of singularities $f \colon Y \to X$ by Proposition \ref{prop:all-properties-of-IO}(1). In particular, $X$ has rational singularities by a standard argument involving local duality \eqref{eq:local-duality} and the Grauert--Riemenschneider vanishing. Thus  
\[
\Omega^i_{X,h} = \cH^0(\IO^i_X) = \Omega^{[i]}_X
\]
for every $i$ by Proposition \ref{prop:KebekusSchnell}. 

Moreover, by ascending induction on $m$ we may assume that $X$ is pre-$(m-1)$-$\IC$-rational, that is: $\IO^i_X \cong \Omega^{[i]}_X$ for $i \leq m-1$ (see  Lemma \ref{lem:equiv-def-pre-k-IC-rat}). Next by the factorisation from \eqref{eq:fact}:
\[
\Omega^i_{X,h} \to \DDB^i_X \to \IO^i_X,
\]
Proposition \ref{prop:lc-def-ofkDuBois}, and the injectivity of \eqref{eq:ass-lc-def-ofkRational} we get that $X$ is pre-$m$-Du Bois. In particular, $\DDB^i_X \cong \Omega^{[i]}_X$ for $i \leq m$. Hence,
\[
\IO^i_X \cong \DDB^i_X.
\]
for all $i \leq m-1$. Therefore, the assumptions of Theorem \ref{lem:surj-lc-k-rational} are satisfied which combined with the injectivity of \eqref{eq:ass-lc-def-ofkRational} imply that
\[
H^j_\m(\Omega^m_{X,h}) \xrightarrow{\phi} H^j_\m(\IO^m_{X})
\]
is an isomorphism for every $j$ and every maximal ideal $\m$. Therefore, by local duality \eqref{eq:local-duality} and Grothendieck duality:
\[
\IO^m_X \cong \Omega^m_{X,h},
\]
and so $X$ is pre-$m$-$\IC$-rational. \qedhere
\end{proof}

Before we proceed with the next corollary, we need the following lemma.
\begin{lemma} \label{lem:injection-fromIO-to-push}
Let $X$ be a normal variety of dimension $d$ defined over $\bC$. Let $f \colon Y \to X$ be any log resolution of singularities with exceptional divisor $E$. Let $D$ be a reduced simple normal crossing divisor containing $E$. Then the map
\[
H^j_\m(\IO^i_X) \xrightarrow{\phi} H^j_\m(Rf_*\Omega^i_Y(\log D))
\]
induced from \eqref{eq:fact} is injective for every maximal ideal $\m$ and all integers $i, j \geq 0$ such that $i+j \leq d$.
\end{lemma}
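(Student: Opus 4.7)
The plan is to realise $\phi$ as a shift of $\Gr_{-i}\DR$ applied to a natural morphism of mixed Hodge modules whose cone lies in non-negative Hodge-module degrees; the left $t$-exactness of $\Gr_{-i}\DR$ will then bound the perverse coherent amplitude of the fibre of $\phi$, and the long exact sequence of local cohomology will yield the injectivity.

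Concretely, set $U \coloneqq X \setminus f(D)$, which is a smooth open dense subscheme of $X$ (since $D \supseteq E$ and $f$ is birational), and write $j \colon U \hookrightarrow X$ for the inclusion. Since $\IC^H_X$ is independent of the choice of smooth open dense subscheme used to define it (cf.\ Definition \ref{def:IC}, lifted to Hodge modules via Subsection \ref{ss:HM}), we may take $\IC^H_X$ to be the image in ${}^p\cH^0$ of the canonical map $j_!\bQ^H_U[d] \to j_*\bQ^H_U[d]$. This produces a natural morphism
\[
\alpha \colon \IC^H_X \to j_*\bQ^H_U[d]
\]
in $D^b_{\rm MHM}(X)$, and combining Definition \ref{def:io} with Remark \ref{remark:jpush} shows that $\Gr_{-i}\DR(\alpha)$ is, up to the shift $[d-i]$, precisely the map $\phi$ of our lemma (composed, if need be, with the natural inclusion $Rf_*\Omega^i_Y(\log E) \to Rf_*\Omega^i_Y(\log D)$ coming from the open embedding $U \hookrightarrow X \setminus f(E)$).

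Let $C^H$ be the cone of $\alpha$. Since $j_*$ is left $t$-exact, $j_*\bQ^H_U[d] \in D^{\geq 0}_{\rm MHM}(X)$, and by construction $\IC^H_X$ is a subobject of ${}^p\cH^0(j_*\bQ^H_U[d])$; the long exact sequence of Hodge-module cohomology therefore forces $C^H \in D^{\geq 0}_{\rm MHM}(X)$. Left $t$-exactness of $\Gr_{-i}\DR$ (Proposition \ref{prop:JakubsFavouriteFunctor}(5)) yields
\[
\Gr_{-i}\DR(C^H) \in {}^pD^{\geq 0}_{\rm coh}(X),
\]
so the fibre $K$ of $\phi$ satisfies $K \cong \Gr_{-i}\DR(C^H)[-(d-i)-1] \in {}^pD^{\geq d-i+1}_{\rm coh}(X)$. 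Applying Remark \ref{rem:perv-coh-maximal-only} to $K[-(d-i+1)]$ gives $H^j_\m(K) = 0$ for every maximal ideal $\m$ and every $j \leq d-i$, and the long exact sequence of local cohomology attached to $K \to \IO^i_X \xrightarrow{\phi} Rf_*\Omega^i_Y(\log D) \xrightarrow{+1}$ yields the desired injectivity in the range $i+j \leq d$.

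The main non-routine step I anticipate is the identification of $\Gr_{-i}\DR(\alpha)$ with the map $\phi$ induced by \eqref{eq:fact}: this amounts to verifying that two Hodge-module-theoretic constructions of the map $\IO^i_X \to Rf_*\Omega^i_Y(\log D)$ agree, which ought to reduce to a computation on the dense open $U$, where both sides become the identity on $\bQ^H_U[d]$; compatibility everywhere else is forced by torsion-freeness considerations on the respective $\cH^0$'s (cf.\ Proposition \ref{prop:dualDDB-explicit}(3) and Proposition \ref{prop:all-properties-of-IO}(3)).
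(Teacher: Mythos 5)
Your argument is correct and essentially reproduces the paper's proof: both take $U = X \setminus f(D)$, use the canonical map $\IC^H_X \to j_*\bQ^H_U[d]$ (whose cone/fibre lies in high enough degree because $j_*$ is left $t$-exact and $\IC_X$ injects into ${}^p\cH^0(j_*\bQ_U[d])$), and conclude via left $t$-exactness of $\GrDR$, Remark \ref{rem:perv-coh-maximal-only}, and the local-cohomology long exact sequence. The compatibility question you flag at the end is treated the same way in the paper (the maps are by convention the Hodge-module-theoretic ones, and agreement is checked on the dense open set); the only slip is cosmetic — one should apply Remark \ref{rem:perv-coh-maximal-only} to $K[d-i+1]$, not $K[-(d-i+1)]$.
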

\begin{proof}
Let $j' \colon U \to Y$ be the inclusion of the complement $U$ of $D$ and let $j \colon U \to X$ be the inclusion of $U$ into $X$. Since $j$ is an open immersion, it is left $t$-exact (see \cite[4.2.4]{BBDG18} or \cite[Remark 3.11(b)]{BMPSTWW2}), and so
\begin{equation} \label{eq:jpush-left-t-exact}
j_*\bQ_U[d] \in {}^pD^{\geq 0}.
\end{equation}
Note that $U$ is smooth. Thus, we can consider the composition 
\[
\psi \colon \IC_X \to {}^pj_*\bQ_U[d] \to j_*\bQ_U[d],
\]
where the first map comes from the Definition  \ref{def:IC} of $\IC_X$. Extend this composite map to the exact triangle
\[
K \to \IC_X \xrightarrow{\psi} j_*\bQ_U[d] \xrightarrow{+1}.
\]
By \eqref{eq:jpush-left-t-exact} and the fact that $\IC_X$ is perverse, we know that $K \in {}^pD^{\geq 0}$. By using the definition of $\IC_X$ or the fact that $\IC_X$ is a simple object in the abelian category of perverse constructible sheaves \eqref{IC3}, we must have that
\[
\IC_X \to {}^p\cH^0(j_* \bQ_U[d])
\]
is an injection in ${\rm Perv}_{\rm cons}(X,\bQ)$, and so in fact $K \in {}^pD^{>0}$. 

Now, the above exact triangle extends automatically, by the same construction, to the exact triangle of Hodge modules:
\[
K^H \to \IC^H_X \xrightarrow{\psi} j_*\bQ^H_U[d] \xrightarrow{+1}.
\]
By the faithfullness and $t$-exactness of ${\rm rat}(-)$ (see Subsection \ref{ss:HM}(1)(2)) we get that $K^H \in D^{> 0}$.

Since the functor $\GrDR$ is left $t$-exact (Proposition \ref{prop:JakubsFavouriteFunctor}(5)), we thus get the following exact triangle
\[
\Gr_\kdot\DR(K^H) \to \Gr_\kdot\DR(\IC^H_X) \xrightarrow{\psi} \Gr_\kdot\DR(j_* \bQ^H_U[d]) \xrightarrow{+1}
\]
with $\Gr_\kdot\DR(K^H) \in {}^pD^{>0}$. This means that $H^j_\m(\Gr_\kdot\DR(K^H)) = 0$ for $j \leq 0$, and so
\begin{equation} \label{eq:injection-fromIO-to-push-key}
H^j_\m(\Gr_\kdot\DR(\IC^H_X)) \to H^j_\m(\Gr_\kdot\DR(j_* \bQ^H_U[d]))
\end{equation}
is injective for all $j \leq 0$.

Now, recall that (Definition \ref{def:io}):
\[
\Gr_\kdot\DR(\IC^H_X) = \IO^0_X[d] \oplus \cdots \oplus \IO^d_X[0].
\]
Moreover, by Remark \ref{remark:jpush}:
\begin{align*}
\Gr_\kdot\DR(j_* \bQ^H_U[d]) \cong Rf_*\Omega^0_Y(\log D)[d] \oplus \cdots \oplus Rf_*\Omega^d_Y(\log D)[0].
\end{align*}
Therefore, \eqref{eq:injection-fromIO-to-push-key} translates to the injectivity of
\[
H^j_\m(\IO^i_X) \xrightarrow{\phi} H^j_\m(Rf_*\Omega^i_Y(\log D))
\]
for every maximal ideal $\m$ and all integers $i, j \geq 0$ such that $i+j \leq d$. This concludes the proof.
\end{proof}

\begin{corollary}
\label{cor:lc-def-ofkRational}
Let $X$ be a normal connected variety of dimension $d$ defined over $\bC$. Let $f \colon Y \to X$ be any log resolution of singularities with exceptional divisor $E$. Fix a reduced simple normal crossing divisor $D$ on $Y$ containing $E$. Then the following conditions are equivalent:
\begin{enumerate}
    \item $X$ is pre-$m$-rational, \vspace{0.2em}
    \item $X$ is pre-$m$-$\IC$-rational,\vspace{0.3em}
    \item $X$ is pre-$m$-Du Bois and $\DDB^i_X \cong \IO^i_X$ for all $0 \leq i \leq m$,\vspace{0.3em}
    \item the map $H^j_\m(\Omega^i_{X,h}) \to H^j_\m(\IO^i_{X})$ is injective for all maximal ideals $\m$, and for all integers $i,j \geq 0$ such that $i+j \leq d$ and $i \leq m$,\vspace{0.3em}
    \item the map $H^j_\m(\Omega^i_{X,h}) \to H^j_\m(\DO^i_{X})$ is injective for all maximal ideals $\m$, and for all integers $i,j \geq 0$ such that $i+j \leq d$ and $i \leq m$,\vspace{0.3em}
    \item the map $H^j_\m(\Omega^i_{X,h}) \to H^j_\m(Rf_*\Omega^i_Y(\log D))$ is injective for all maximal ideals $\m$, and for all integers $i,j \geq 0$ such that $i+j \leq d$ and $i \leq m$.
\end{enumerate}
\end{corollary}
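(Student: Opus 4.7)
The plan is to organise the proof around the pivot condition (2), exploiting what is already in place. The equivalence (2) $\iff$ (4) is exactly Proposition \ref{prop:lc-def-ofkRational}, and the equivalence (1) $\iff$ (2) is the result discussed just above Lemma \ref{lem:equiv-def-pre-k-IC-rat}, which I would invoke from \cite{popa2024injectivityvanishingdubois,DOR25,PP24} at this point, as announced earlier in the paper. It then remains to establish (2) $\iff$ (3) and (4) $\iff$ (5) $\iff$ (6).

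For (2) $\iff$ (3), I would argue as follows. Assume (2). Then $X$ has rational singularities and $\IO^i_X \cong \Omega^{[i]}_X$ by Lemma \ref{lem:equiv-def-pre-k-IC-rat}, while Lemma \ref{lem:prekIC-implies-prekDDB} tells us that $X$ is pre-$m$-Du Bois, so by Proposition \ref{prop:KebekusSchnell} we get $\DDB^i_X = \Omega^i_{X,h} = \Omega^{[i]}_X = \IO^i_X$ for $0 \leq i \leq m$; this is (3). Conversely, if $X$ is pre-$m$-Du Bois and $\DDB^i_X \cong \IO^i_X$ for $0 \leq i \leq m$, then $\IO^i_X$ is isomorphic to the sheaf $\DDB^i_X = \Omega^i_{X,h}$, so $\cH^{>0}(\IO^i_X) = 0$ in the required range, which is (2) by definition.

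For (4) $\iff$ (5) $\iff$ (6), the plan is to work with the factorisation from (\ref{eq:fact}), augmented by the natural map of log-forms $Rf_*\Omega^i_Y(\log E) \to Rf_*\Omega^i_Y(\log D)$ coming from $E \subseteq D$, giving a composition
\[
\Omega^i_{X,h} \to \IO^i_X \to \DO^i_X \to Rf_*\Omega^i_Y(\log E) \to Rf_*\Omega^i_Y(\log D).
\]
Lemma \ref{lem:injection-fromIO-to-push} asserts that the induced map $H^j_\m(\IO^i_X) \to H^j_\m(Rf_*\Omega^i_Y(\log D))$ is injective in the range $i+j \leq d$. Since this arrow factors through both $H^j_\m(\DO^i_X)$ and $H^j_\m(Rf_*\Omega^i_Y(\log E))$, each of the initial links $H^j_\m(\IO^i_X) \to H^j_\m(\DO^i_X)$ and $H^j_\m(\IO^i_X) \to H^j_\m(Rf_*\Omega^i_Y(\log E))$ is automatically injective as well. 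The elementary observation that, in a chain $a \to b \to c$ with $b \to c$ injective, injectivity of $a \to c$ is equivalent to injectivity of $a \to b$, then yields all three equivalences (4) $\iff$ (5) $\iff$ (6) simultaneously.

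The only mildly delicate point I foresee is confirming that the map $\Omega^i_{X,h} \to Rf_*\Omega^i_Y(\log D)$ implicit in (6) matches the composite obtained from (\ref{eq:fact}) followed by the log inclusion $\log E \hookrightarrow \log D$; this is essentially formal but deserves a remark. Beyond that, no serious obstacle is expected, as all the substantive Hodge-theoretic input has been absorbed into Proposition \ref{prop:lc-def-ofkRational}, Lemma \ref{lem:injection-fromIO-to-push}, Lemma \ref{lem:prekIC-implies-prekDDB}, and the external equivalence (1) $\iff$ (2).
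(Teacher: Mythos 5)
Your treatment of (2)$\iff$(3), of (2)$\iff$(4) via Proposition \ref{prop:lc-def-ofkRational}, and of the block (4)$\iff$(5)$\iff$(6) coincides with the paper's proof: the paper likewise deduces (4)$\iff$(5)$\iff$(6) from the injectivity of $H^j_\m(\IO^i_X)\to H^j_\m(Rf_*\Omega^i_Y(\log D))$ (Lemma \ref{lem:injection-fromIO-to-push}) together with the factorisation \eqref{eq:fact}, and proves (2)$\iff$(3) exactly through Lemmas \ref{lem:equiv-def-pre-k-IC-rat}, \ref{lem:prekIC-implies-prekDDB} and Proposition \ref{prop:KebekusSchnell}. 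The genuine divergence is condition (1): you outsource (1)$\iff$(2) wholesale to \cite{popa2024injectivityvanishingdubois} (with \cite{DOR25}, \cite{PP24}), which is logically admissible since the paper itself records that the equivalence is established there, but the paper instead proves (3)$\implies$(1) internally. It uses the self-duality factorisation of Remark \ref{rem:fact},
\[
\DDB^i_X \xrightarrow{\psi} \IO^i_X \cong \bD_X(\IO^{d-i}_X[d]) \xrightarrow{\bD_X(\psi)} \bD_X(\DDB^{d-i}_X[d]) = \DO^i_X,
\]
and invokes only \cite[Proposition 7.4]{PP24} to obtain $\DDB^i_X\cong\Omega^{[i]}_X$ and $\IO^i_X\cong\DDB^i_X$ in the complementary range $d-m-1\leq i\leq d$; this makes the displayed composition an isomorphism for $0\leq i\leq m$, hence $X$ is pre-$m$-rational, while (1)$\implies$(5) is immediate for the converse. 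What the paper's route buys is the isolation of the single essential external ingredient (the weight-filtration argument of \cite{PP24}, going back to \cite{DOR25}), which the authors flag explicitly right after the proof; your shortcut is briefer but hides exactly this point and is less self-contained. Finally, the ``mildly delicate point'' you raise about matching the map in (6) with the composite through $Rf_*\Omega^i_Y(\log E)$ does not arise in the paper's version, since there the factorisation is taken directly as $\IO^i_X\to\DO^i_X\to Rf_*\Omega^i_Y(\log D)$ with all maps constructed via Hodge modules, Lemma \ref{lem:injection-fromIO-to-push} being stated for $\log D$ from the start.
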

\begin{proof}
By \eqref{eq:fact} we have a factorisation:
\[
H^j_\m(\IO^i_{X}) \to H^j_\m(\DO^i_{X}) \to H^j_\m(Rf_*\Omega^i_Y(\log D)).
\]
The composition is injective for all $i,j \geq 0$ such that $i+j \leq d$ by Lemma \ref{lem:injection-fromIO-to-push}. Therefore, (4), (5), and (6) are all equivalent. Next, (2) and (4) are equivalent by Proposition \ref{prop:lc-def-ofkRational}.\\

Therefore, it remains to show that (1), (2), and (3) are equivalent. Note that (3) immediately implies (2). Hence by Lemma \ref{lem:equiv-def-pre-k-rat} and Lemma \ref{lem:equiv-def-pre-k-IC-rat} we may assume from now on that $X$ is rational and
\[
\Omega^i_{X,h} = \cH^0(\IO^i_X) = \cH^0(\DO^i_X) = \Omega^{[i]}_X
\]
for all $i$ (see Proposition \ref{prop:KebekusSchnell}).

Next, we show that (2) and (3) are equivalent. As mentioned above, (3) implies (2). Now assume (2), that is, $X$ is pre-$m$-$\IC$-rational. This implies that $X$ is pre-$m$-Du Bois by Lemma \ref{lem:prekIC-implies-prekDDB} and in view of $\DDB^i_X \cong \Omega^{[i]}_X$ and $\IO^i_X \cong \Omega^{[i]}_X$, we get that $\IO^i_X \cong \DDB^i_X$ for all $0 \leq i \leq m$. Thus (2) implies (3).

We conclude by showing that (1) and (3) are equivalent. First, (1) immediately implies (5), which is equivalent to (3). As for the opposite direction, suppose that $\DDB^i_X \cong \Omega^{[i]}_X$ and $\IO^i_X \cong \DDB^i_X$ for all $i \leq m$. By Remark \ref{rem:fact} (cf.\ Proposition \ref{prop:all-properties-of-IO}(5)), we have the following factorisation:
\[
\DDB^i_X \xrightarrow{\psi} \IO^i_X \cong \bD(\IO^{d-i}_X[d]) \xrightarrow{\bD(\psi)} \bD(\DDB^{d-i}_X[d]) = \DO^i_X.
\]
By \cite[Proposition 7.4]{PP24}, we get that $\DDB^i_X \cong \Omega^{[i]}_X$ and $\IO^i_X \cong \DDB^i_X$ for all $d-m-1 \leq i \leq d$. In particular, the whole composition $\DDB^i_X \to \DO^i_X$ is an isomorphism as well for $0 \leq i \leq m$. Hence, (3) implies (1) and the proof is concluded. \qedhere
\end{proof}
The use of \cite[Proposition 7.4]{PP24} (which dates back to the argument from \cite{DOR25}) is essential here. We do not know how to prove that pre-$m$-rationality and pre-$m$-IC-rationality are equivalent without the use of weight filtration.

\section{Inversion of adjunction for higher rational singularities}
In this section, we work with the following notation.

\begin{setting} \label{setting:inv-k-rational}
Let $X$ be a normal connected variety of dimension $d$ defined over $\bC$. Let $D$ be a normal Cartier prime divisor on $X$ and let $f \colon Y \to X$ be a log resolution of singularities of $(X,D)$. We denote the exceptional divisor by $E$, the strict transform of $D$ by $G$, and the intersection $E \cap G$ by $F$. 
\[
\begin{tikzcd}
\mathllap{F \coloneqq }\ G \cap E \ar[hook]{r} & G \cup E \ar[hook]{r} \ar{d} & Y \ar{d}\\
& D \ar[hook]{r} & X.
\end{tikzcd}
\]
\end{setting}

\begin{lemma} \label{lem:extension-for-log-pairs}
With notation as in Setting \ref{setting:inv-k-rational}, suppose that $X$ and $D$ have rational singularities. Then
\begin{align*}
f_*\Omega^i_Y(\log G + E) &= \Omega^{[i]}_X(\log D), \\
f_*\Omega^i_Y(\log E) &= \Omega^{[i]}_X =\Omega^i_{X,h}, \text{ and }\\
f_*\Omega^i_G(\log F) &= \Omega^{[i]}_D = \Omega^i_{D,h}
\end{align*}
for all $0 \leq i \leq d$.
\end{lemma}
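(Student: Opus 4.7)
The plan is to establish the three identities in sequence---second, third, first---leveraging Proposition \ref{prop:KebekusSchnell} and the residue sequences of Lemma \ref{lem:basic-ses}.

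For the second identity, I would apply Proposition \ref{prop:KebekusSchnell} directly to $f \colon Y \to X$: since $X$ is normal and rational, the proposition yields
\[
f_*\Omega^i_Y(\log E) = \Omega^{[i]}_X = \Omega^i_{X,h}.
\]

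For the third identity, I would view $f|_G \colon G \to D$ as a log resolution of $D$: $G$ is smooth, $f|_G$ is proper birational, and $F = E \cap G$ is SNC on $G$ (as $E + G$ is SNC on $Y$). To invoke Proposition \ref{prop:KebekusSchnell} with the divisor $F$, I would first show $F = \Exc(f|_G)$. Assuming $f$ is an isomorphism over the log smooth locus of $(X,D)$, we have $f(E) \subseteq \mathrm{Sing}(X) \cup \mathrm{Sing}(D)$ and hence $f(F) \subseteq \mathrm{Sing}(D) \cup (\mathrm{Sing}(X) \cap D)$. Normality of $D$ gives $\codim_D \mathrm{Sing}(D) \geq 2$, and for $\mathrm{Sing}(X) \cap D$ I would argue the same codimension bound by local algebra: if a codimension-one component $T$ of $\mathrm{Sing}(X) \cap D$ in $D$ existed, then at its generic point $\eta_T$ the ring $\cO_{X,\eta_T}$ would be two-dimensional and Cohen--Macaulay (rationality implies CM), and its quotient by the local equation $\pi$ of the Cartier divisor $D$ would be the DVR $\cO_{D,\eta_T}$ (since $D$ is normal, hence $R_1$); this would force $\m_{X,\eta_T}$ to be generated by $(\pi, x)$, making $\cO_{X,\eta_T}$ regular, a contradiction. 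Thus $\codim_D(\mathrm{Sing}(X) \cap D) \geq 2$, so $f(F)$ has codimension $\geq 2$ in $D$, every component of $F$ is $f|_G$-exceptional, and $F = \Exc(f|_G)$. Proposition \ref{prop:KebekusSchnell} applied on $D$ now yields
\[
f_*\Omega^i_G(\log F) = \Omega^{[i]}_D = \Omega^i_{D,h}.
\]

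For the first identity, I would compare $f_*\Omega^i_Y(\log G+E)$ with $\Omega^{[i]}_X(\log D)$ via natural maps in both directions. Let $U$ denote the log smooth locus of $(X,D)$; its complement in $X$ has codimension $\geq 2$ by the bounds above. Since $f$ is an isomorphism over $U$, we have $f_*\Omega^i_Y(\log G+E)|_U = \Omega^i_U(\log D|_U) = \Omega^{[i]}_X(\log D)|_U$. Torsion-freeness of $f_*\Omega^i_Y(\log G+E)$ together with Hartogs extension yields $f_*\Omega^i_Y(\log G+E) \hookrightarrow \Omega^{[i]}_X(\log D)$. Conversely, I would construct $\Omega^{[i]}_X(\log D) \to f_*\Omega^i_Y(\log G+E)$ by pullback: a local section $\omega = \alpha \wedge d\pi/\pi + \beta$ of $\Omega^{[i]}_X(\log D)$ near a point of $D \cap U$ pulls back to $f^*\alpha \wedge d(f^*\pi)/f^*\pi + f^*\beta$, which lies in $\Omega^i_Y(\log G+E)$ because $f^*\alpha$ and $f^*\beta$ lie in $\Omega^{i-1}_Y(\log E)$ and $\Omega^i_Y(\log E)$ respectively by Proposition \ref{prop:KebekusSchnell}, while $d(f^*\pi)/f^*\pi$ has log pole on $\Supp(f^*D) = G + \sum_i E_i \subseteq G + E$; away from $D$, the reflexive form $\omega$ pulls back via Proposition \ref{prop:KebekusSchnell} to $\Omega^i_Y(\log E) \subseteq \Omega^i_Y(\log G+E)$. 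Both natural maps agree with the identity on $U$, so the composition on the reflexive sheaf $\Omega^{[i]}_X(\log D)$ is the identity, forcing both to be isomorphisms and giving $f_*\Omega^i_Y(\log G+E) = \Omega^{[i]}_X(\log D)$.

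The main obstacle I anticipate is the local algebra step in the third identity establishing $\codim_D(\mathrm{Sing}(X) \cap D) \geq 2$. This bound is essential: it guarantees both $F = \Exc(f|_G)$ (needed for the third identity) and the codimension $\geq 2$ property of $X \setminus U$ (used for Hartogs extension in the first identity); without it, $f_*\Omega^i_G(\log F)$ would properly contain $\Omega^{[i]}_D$ via extra log poles along non-exceptional components of $F$.
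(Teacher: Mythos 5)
Your treatment of the second and third identities is essentially the paper's: both are obtained from Proposition \ref{prop:KebekusSchnell}, applied to $f \colon Y \to X$ and to $f|_G \colon G \to D$ respectively (your auxiliary observation that $\mathrm{Sing}(X)\cap D \subseteq \mathrm{Sing}(D)$, hence $f(F)$ has codimension $\geq 2$ in $D$ and $F$ is $f|_G$-exceptional, is the correct justification, and the hypothesis you add --- that $f$ is an isomorphism over the log smooth locus of $(X,D)$ --- is indeed what makes this step work and is implicit in the paper's notion of a log resolution of the pair).

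The genuine gap is in the first identity. The injection $f_*\Omega^i_Y(\log G+E) \hookrightarrow \Omega^{[i]}_X(\log D)$ via torsion-freeness and reflexive extension is fine, but your reverse map is never actually constructed on all of $X$: the local decomposition $\omega = \alpha \wedge d\pi/\pi + \beta$ with $\alpha,\beta$ (reflexive) forms without log poles is only available near points of $D\cap U$, i.e.\ where $(X,D)$ is log smooth, and there the claim is vacuous since $f$ is an isomorphism. The real content is what happens over $D\setminus U$ (essentially $\mathrm{Sing}(D)$): the pullback $f^*\omega$ of a section of $\Omega^{[i]}_X(\log D)$ is a priori defined only on $f^{-1}\bigl(U \cup (X\setminus D)\bigr)$, and the complement in $Y$ contains \emph{divisors} (exceptional components of $E$ mapping into $\mathrm{Sing}(D)$), so no Hartogs-type argument applies; one must show that $f^*\omega$ has at worst log poles along $G+E$ there, and this does not follow formally from Proposition \ref{prop:KebekusSchnell} applied to $X$ and to $D$ separately --- it is precisely a logarithmic extension theorem for the \emph{pair} $(X,D)$. (Nor can one extend by reflexivity of the target, since $f_*\Omega^i_Y(\log G+E)$ is not known to be reflexive; that is essentially equivalent to what is being proved.) The paper closes exactly this gap by a different route: since $X$ and $D$ are rational, they are Du Bois, so $(X,D)$ is a Du Bois pair by \cite[Proposition 6.15]{Kol13}, and then the first identity is \cite[Theorem 4.1]{Graf-Kovacs}. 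Your argument would need to either invoke that result or supply an independent proof of the extension statement over $\mathrm{Sing}(D)$; as written, it assumes it.
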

\begin{proof}
Since $X$ and $D$ are rational, they are Du Bois, and so $(X,D)$ is a Du Bois pair (\cite[Proposition 6.15]{Kol13}). Hence, the first statement follows immediately from \cite[Theorem 4.1]{Graf-Kovacs}. The second statement follows from Proposition \ref{prop:KebekusSchnell} as $X$ and $D$ have rational singularities.
\end{proof}

\begin{lemma} \label{lem:advance-sess-reflexified}
Let $X$ be a normal connected variety of dimension $d$ defined over $\bC$, let $m>0$ be a fixed integer, and let $D$ be a prime normal Cartier divisor. Suppose that 
\begin{equation} \label{eq:advance-ass-supp}
{\rm Supp}\, \cExt^{-2}(\Omega^{[m]}_X, \omega^\kdot_X) \subseteq D
\end{equation}
and for all prime ideals $\p \in {\rm Sing}(D)$ we have that
\[
H^2_\p(\Omega^{[m]}_D) = H^2_\p(\Omega^{[m-1]}_D) = 0.
\]
Then there exist the following short exact sequences which generically agree with those from Lemma \ref{lem:basic-ses}:
\begin{align*}
0 \to \Omega^{[m]}_X(\log D)(-D) \to \Omega^{[m]}_X \to \Omega^{[m]}_D \to 0 \\
0 \to \Omega^{[m]}_X \to \Omega^{[m]}_X(\log D) \to \Omega^{[m-1]}_D \to 0.
\end{align*}
\end{lemma}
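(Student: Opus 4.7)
The plan is to lift the short exact sequences of Lemma~\ref{lem:basic-ses} from the log resolution $f\colon Y \to X$ (as in Setting~\ref{setting:inv-k-rational}) down to $X$ via $Rf_*$. Applying Lemma~\ref{lem:basic-ses} on $Y$ with prime divisor $G$ and SNC divisor $G+E$ gives
\begin{align*}
\mathrm{(A)}\quad & 0 \to \Omega^m_Y(\log G+E)(-G) \to \Omega^m_Y(\log E) \to \Omega^m_G(\log F) \to 0,\\
\mathrm{(B)}\quad & 0 \to \Omega^m_Y(\log E) \to \Omega^m_Y(\log G+E) \to \Omega^{m-1}_G(\log F) \to 0.
\end{align*}
Pushing these forward by $Rf_*$ and using Lemma~\ref{lem:extension-for-log-pairs} to identify $f_*\Omega^m_Y(\log E)\cong\Omega^{[m]}_X$, $f_*\Omega^m_Y(\log G+E)\cong\Omega^{[m]}_X(\log D)$, $f_*\Omega^m_G(\log F)\cong\Omega^{[m]}_D$, and $f_*\Omega^{m-1}_G(\log F)\cong\Omega^{[m-1]}_D$, together with the projection formula applied to $f^*\cO_X(-D) = \cO_Y(-G-E')$ (with $E' \leq E$ effective) and a reflexivity argument to match $f_*\Omega^m_Y(\log G+E)(-G)$ with $\Omega^{[m]}_X(\log D)(-D)$, produces the first three terms of the desired sequences (i) and (ii).

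It remains to show that the corresponding connecting homomorphisms vanish, i.e.\ that the cokernels
\[
Q_1 := \cokernel\bigl(\Omega^{[m]}_X \to \Omega^{[m]}_D\bigr), \qquad Q_2 := \cokernel\bigl(\Omega^{[m]}_X(\log D) \to \Omega^{[m-1]}_D\bigr)
\]
are zero. Since (A) and (B) restrict to genuine short exact sequences on the smooth locus of $X$ and $D$, each $Q_i$ is supported on $\mathrm{Sing}(D) \cup (\mathrm{Sing}(X) \cap D)$, hence in codimension $\geq 1$ on $D$. I would verify $Q_i = 0$ stalk-by-stalk at closed $\p$ via local cohomology along the two-step filtrations
\begin{align*}
&0 \to \Omega^{[m]}_X(\log D)(-D) \to \Omega^{[m]}_X \to I_1 \to 0,\quad 0 \to I_1 \to \Omega^{[m]}_D \to Q_1 \to 0,\\
&0 \to \Omega^{[m]}_X \to \Omega^{[m]}_X(\log D) \to I_2 \to 0,\quad 0 \to I_2 \to \Omega^{[m-1]}_D \to Q_2 \to 0,
\end{align*}
with $I_i$ the image of the middle map.

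The hypotheses supply the required depth estimates. Via local duality~\eqref{eq:local-duality}, hypothesis (a) translates to $H^2_\p(\Omega^{[m]}_X) = 0$ for every prime $\p \not\subset D$; together with $S_2$-reflexivity of $\Omega^{[m]}_X$ and Lemma~\ref{lem:lc-max-to-prime} this yields depth $\geq 3$ at all such primes. Hypothesis (b) directly gives depth $\geq 3$ for $\Omega^{[m]}_D$ and $\Omega^{[m-1]}_D$ at singular points of $D$. Combining these across the two pairs above: $S_2$-reflexivity kills $H^0_\p$ and $H^1_\p$ of the left two terms in the first SES (hence pins $H^1_\p(I_i)$ inside $H^2_\p$ of the leftmost term), while hypothesis (b) kills $H^2_\p$ of the downstairs target in the second SES; these conspire to force $H^0_\p(Q_i) = 0$. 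Non-closed primes are then handled by Lemma~\ref{lem:lc-max-to-prime}.

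The main obstacle will be the careful local cohomology bookkeeping needed to make these sequences close up: one must simultaneously control $H^2_\p$ of $\Omega^{[m]}_X(\log D)(-D)$ and $\Omega^{[m]}_X$ on the upstairs side and $H^2_\p$ of $\Omega^{[m]}_D$ and $\Omega^{[m-1]}_D$ on the downstairs side, and the hypotheses are calibrated precisely for the degrees to match. A secondary technical step is the projection-formula identification $f_*\Omega^m_Y(\log G+E)(-G)\cong \Omega^{[m]}_X(\log D)(-D)$, where $S_2$-reflexivity on $X$ implies that $f_*$ of the inclusion $\Omega^m_Y(\log G+E)(-G-E') \hookrightarrow \Omega^m_Y(\log G+E)(-G)$ is an isomorphism, as the two sheaves agree outside a codimension-$\geq 2$ subset of $X$.
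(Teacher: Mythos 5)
There is a genuine gap, and it sits exactly at the point your outline defers to ``careful local cohomology bookkeeping.'' Chasing your two-step filtrations at a prime $\p \in {\rm Sing}(D)$, the vanishing $H^0_\p(Q_1)=0$ forces you (after using torsion-freeness and reflexivity) to produce $H^2_\p(\Omega^{[m]}_X(\log D)(-D))=0$, and similarly $H^0_\p(Q_2)=0$ needs $H^2_\p(\Omega^{[m]}_X)=0$ --- at primes \emph{inside} $D$. Hypothesis \eqref{eq:advance-ass-supp} gives no such vanishing there: it only says $\cExt^{-2}(\Omega^{[m]}_X,\omega^\kdot_X)$ is supported \emph{in} $D$, so via local duality \eqref{eq:local-duality} the group $H^2_\m(\Omega^{[m]}_X)$ may well be nonzero at points of $D$ (your reading ``$H^2_\p(\Omega^{[m]}_X)=0$ for $\p\not\subset D$'' concerns exactly the primes where nothing is at stake, and for non-closed primes it is also off by the shift $\dim V(\p)$ in local duality). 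The paper's proof supplies the two missing mechanisms. First, a Noetherian-induction/contradiction step: choosing $\p$ minimal among primes where the images $\cF\subseteq\Omega^{[m]}_X/\!/\,$... more precisely $\cF={\rm im}(\Omega^{[m]}_X\to\Omega^{[m]}_D)$ and $\cG={\rm im}(\Omega^{[m]}_X(\log D)\to\Omega^{[m-1]}_D)$ fail to be everything, the cokernels become punctual at $\p$, so hypothesis (b) transfers to $H^2_\p(\cF)=H^2_\p(\cG)=0$ and yields surjectivity of both $H^2_\p(\Omega^{[m]}_X)\to H^2_\p(\Omega^{[m]}_X(\log D))$ and $H^2_\p(\Omega^{[m]}_X(\log D)(-D))\to H^2_\p(\Omega^{[m]}_X)$. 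Second, composing these shows multiplication by the equation $t$ of $D$ is surjective on $H^2_\p(\Omega^{[m]}_X)$, i.e.\ injective on $\cExt^{-2}(\Omega^{[m]}_X,\omega^\kdot_X)_\p$; since that module is killed by a power of $t$ by \eqref{eq:advance-ass-supp}, it vanishes, whence $H^2_\p(\Omega^{[m]}_X)=0$ and then $H^2_\p(\Omega^{[m]}_X(\log D))=0$. Only then does the diagram chase you sketch close up. Without this multiplication-by-$t$ argument (or some substitute), the hypotheses do not ``conspire'' to kill $H^1_\p(I_i)$, and the proof does not go through; indeed the paper remarks that it knows no way to prove the lemma without running precisely this kind of argument.

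A secondary problem is your construction of the sequences: you obtain the identifications $f_*\Omega^m_Y(\log E)\cong\Omega^{[m]}_X$, $f_*\Omega^m_Y(\log G+E)\cong\Omega^{[m]}_X(\log D)$, etc., from Lemma \ref{lem:extension-for-log-pairs}, but that lemma assumes $X$ and $D$ have rational singularities, which is not a hypothesis of the present statement (rationality is added only later, in Corollary \ref{cor:advanceses}); extension of log forms from a resolution is not automatic. The paper avoids this by not using a resolution at all: it pushes the short exact sequences of Lemma \ref{lem:basic-ses} forward from the open locus where $X$ and $D$ are smooth (whose complement has codimension $\geq 2$ in both), obtaining the left-exact sequences directly among reflexive sheaves, and the entire content of the lemma is the surjectivity on the right, proved as above.
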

\noindent This result is surprisingly subtle. The only way that to prove this lemma that we could come up with, with no extraneous assumptions on the ambient space, such as $H^2_{\m}(\Omega^{[m]}_X) = 0$, is by running a similar argument to that of the proof of the main theorem (Theorem \ref{thm:inversion-of-adjunction}).
\begin{proof}
Let $j \colon U \hookrightarrow X$ be an open subset with complement $Z = X\, \backslash\, U$ such that $U$ and $D|_U$ are smooth, ${\rm codim}_X Z \geq 2$, and ${\rm codim}_D(Z \cap D) \geq 2$. Indeed, we can take $U$ as the complement of ${\rm Sing}(D)\cup {\rm Sing}(X)$ since ${\rm Sing}(X)\cap D\subseteq {\rm Sing}(D)$ by inversion of adjunction for regularity. By pushing forward the short exact sequences from Lemma \ref{lem:basic-ses} via $j$, we get the following exact sequences:
\begin{align}
0 \to \Omega^{[m]}_X(\log D)(-D) \to \Omega^{[m]}_X \xrightarrow{\phi} \Omega^{[m]}_D \label{eq:key-ses-building} \\
0 \to \Omega^{[m]}_X \to \Omega^{[m]}_X(\log D) \xrightarrow{\psi} \Omega^{[m-1]}_D. \nonumber
\end{align}
We denote $\cF \coloneqq {\rm im}(\phi)$ and $\cG \coloneqq {\rm im}(\psi)$. We have short exact sequences:
\begin{align}
&0 \to \cF \to \Omega^{[m]}_D \to \Omega^{[m]}_D/\cF \to 0 \label{eq:first-quotient} \\
&0 \to \cG \to \Omega^{[m-1]}_D \to \Omega^{[m-1]}_D/\cG \to 0. \nonumber
\end{align}
In order to conclude the proof, we need to show that 
\begin{equation} \label{eq:claimcG}
\cF = \Omega^{[m]}_D \quad \text{ and } \quad \cG = \Omega^{[m-1]}_D.
\end{equation}
By contradiction, assume that at least one of these statements is false. Then, in the set of all prime ideals $\p$ such that $\cF_{\p} \neq \Omega^{[m]}_{D,\p}$ or $\cG_{\p} \neq \Omega^{[m-1]}_{D,\p}$, let us pick a minimal such $\p$ with respect to containment. 
By Lemma \ref{lem:basic-ses} and inversion of adjunction for regularity, we have that $\p \in {\rm Sing}(D)$.

By minimality, the localisations of $\Omega^{[m]}_D/\cF$ and $\Omega^{[m-1]}_D/\cG$ at $\p$ are empty or zero-dimensional.
In particular,
\[
H^{i}_\p(\Omega^{[m]}_D/\cF) = H^{i}_\p(\Omega^{[m-1]}_D/\cG) = 0,
\]
for all $i>0$,
and so the long exact sequences of local cohomology applied to \eqref{eq:first-quotient} imply:
\[
H^2_\p(\cF) = H^2_\p(\Omega^{[m]}_D) = 0 \quad \text{ and } \quad H^2_\p(\cG) = H^2_\p(\Omega^{[m-1]}_D) = 0.
\]
By \eqref{eq:key-ses-building}, we thus get that both
\begin{equation} \label{eq:twosurj}
H^2_\p(\Omega^{[m]}_X) \to H^2_\p(\Omega^{[m]}_X(\log D)) \quad \text{ and } \quad H^2_\p(\Omega^{[m]}_X(\log D)(-D)) \to H^2_\p(\Omega^{[m]}_X)
\end{equation}
are surjective. Therefore, the following composition
\begin{equation} \label{eq:comp0-t}
H^2_\p(\Omega^{[m]}_X(-D)) \to H^2_\p(\Omega^{[m]}_X(\log D)(-D)) \to H^2_\p(\Omega^{[m]}_X),
\end{equation}
induced by the natural inclusion  $\Omega^{[k]}_X(-D) \hookrightarrow \Omega^{[m]}_X$, is surjective.

\begin{claim} $H^2_\p(\Omega^{[m]}_X)=0$.
\end{claim}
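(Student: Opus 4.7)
The plan is to reinterpret the surjection established in (\ref{eq:comp0-t}) as multiplication by a local defining equation of $D$, and then to exploit the support hypothesis (\ref{eq:advance-ass-supp}) through local duality.

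First, since $D$ is Cartier, the line bundle $\cO_X(-D)$ is trivialisable in a neighbourhood of $\p$; concretely, if $t \in \cO_{X,\p}$ is a local equation of $D$, then $\cO_X(-D)_\p = t\cO_{X,\p}$ inside $\cO_{X,\p}$. After tensoring with $\Omega^{[m]}_X$ and using reflexivity (hence torsion-freeness) of $\Omega^{[m]}_X$, the natural inclusion $\Omega^{[m]}_X(-D)\hookrightarrow \Omega^{[m]}_X$ identifies at the stalk $\p$ with multiplication by $t$ on $\Omega^{[m]}_{X,\p}$. Since this inclusion is precisely the composition displayed in (\ref{eq:comp0-t}), the surjectivity already established translates into the statement that
\[
t\colon H^2_\p(\Omega^{[m]}_X) \twoheadrightarrow H^2_\p(\Omega^{[m]}_X)
\]
is surjective.

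Applying local duality (\ref{eq:local-duality}) --- and using that Matlis duality is exact and converts the $t$-action into itself --- this surjectivity is equivalent to multiplication by $t$ being \emph{injective} on $\cExt^{-2}(\Omega^{[m]}_X, \omega^\kdot_X)^{\wedge}_\p$. Faithful flatness of $\p$-adic completion then upgrades this to the injectivity of $t$ on the coherent stalk $\cExt^{-2}(\Omega^{[m]}_X, \omega^\kdot_X)_\p$ itself.

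On the other hand, by the support hypothesis (\ref{eq:advance-ass-supp}) the coherent sheaf $\cExt^{-2}(\Omega^{[m]}_X, \omega^\kdot_X)$ is set-theoretically supported on $D = V(t)$, so $t$ acts nilpotently on its finitely generated stalk at $\p$. A module on which $t$ is simultaneously injective and nilpotent is zero, so $\cExt^{-2}(\Omega^{[m]}_X, \omega^\kdot_X)_\p = 0$, and one further application of local duality yields the desired vanishing $H^2_\p(\Omega^{[m]}_X) = 0$. I do not anticipate any serious obstacle in this claim: the only delicate point is the trivialisation identification above, which hinges on $D$ being Cartier, while every other manipulation is a routine combination of local duality with the support hypothesis.
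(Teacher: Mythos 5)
Your proposal is correct and follows essentially the same route as the paper: interpret the surjectivity of \eqref{eq:comp0-t} as surjectivity of multiplication by a local equation $t$ of $D$ on $H^2_\p(\Omega^{[m]}_X)$, dualise via \eqref{eq:local-duality} to get injectivity of $t$ on $\cExt^{-2}(\Omega^{[m]}_X,\omega^\kdot_X)_\p$, and use the support hypothesis \eqref{eq:advance-ass-supp} (so $t$ is nilpotent on that stalk) to force the stalk, and hence $H^2_\p(\Omega^{[m]}_X)$, to vanish. Your extra remarks on exactness of Matlis duality and faithful flatness of completion merely make explicit points the paper leaves implicit.
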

\begin{proof}
We may assume that $X=\Spec R$ is affine and $D = (t)$ for $t \in R$. Then the surjectivity of \eqref{eq:comp0-t} means that multiplication by $t$:
\[
H^2_\p(\Omega^{[m]}_X) \xrightarrow{\cdot t} H^2_\p(\Omega^{[m]}_X)
\]
is surjective. Via local duality \eqref{eq:local-duality} this translates to the injectivity of:
\[
\cExt^{-2}(\Omega^{[m]}_X, \omega^\kdot_X)_\p \xrightarrow{\cdot t} \cExt^{-2}(\Omega^{[m]}_X, \omega^\kdot_X)_\p.
\]
In view of assumption \eqref{eq:advance-ass-supp}, this is only possible when $\cExt^{-2}(\Omega^{[m]}_X, \omega^\kdot_X)_\p=0$. 
Here, recall that $\mathrm{Supp}(\cExt^{-2}(\Omega^{[m]}_X, \omega^\kdot_X)_\p)=V(\mathrm{ann}(\cExt^{-2}(\Omega^{[m]}_X, \omega^\kdot_X)_\p))$. 
By local duality \eqref{eq:local-duality}, this translates to the vanishing of $H^2_\p(\Omega^{[m]}_X)$.
\end{proof}
In particular, $H^2_{\p}(\Omega^{[m]}_X(\log D))=0$
given that
\[
H^2_{\p}(\Omega^{[m]}_X)\to H^2_{\p}(\Omega^{[m]}_X(\log D))
\]
is surjective as we have seen before.

First, we show that $\cG$ and $\Omega^{[m-1]}_D$ agree at $\p$. To this end, by the long exact sequence of local cohomology applied to the second sequence in \eqref{eq:key-ses-building}, we get an exact sequence 
\[
H^1_\p(\Omega^{[m]}_X(\log D))\to H^1_{\p}(\cG)\to H^2_{\p}(\Omega^{[m]}_X)=0.
\]
Since $\Omega^{[m]}_X(\log D)$ is reflexive and $\p$ has codimension at least two in $X$, we have $H^1_\p(\Omega^{[m]}_X(\log D))=0$. Thus, $H^1_\p(\cG)=0$.
In turn, from the second sequence of \eqref{eq:first-quotient} and $H^0_\p(\Omega^{[m]}_D)=0$, we get that:
\[
(\Omega^{[m-1]}_D/\cG)_{\p}\cong H^0_\p(\Omega^{[m-1]}_D/\cG)=0,
\]
which implies that $\cG$ and $\Omega^{[m-1]}_D$ agree at $\p$. Here the first isomorphism holds, because $\Omega^{[m-1]}_D/\cG$ is supported at $V(\p)$. 

Next, we show that $\cF$ and $\Omega^{[m]}_D$ agree at $\p$. To this end, by the first sequence in \eqref{eq:key-ses-building}, we have an exact sequence
\[
H^1_\p(\Omega^{[m]}_X)\to H^1_\p(\cF)\to H^2_{\p}(\Omega^{[m]}_X(\log D)(-D)).
\]
Since $\Omega^{[m]}_X$ is reflexive, $H^1_\p(\Omega^{[m]}_X)=0$.
Moreover, 
\[
H^2_{\p}(\Omega^{[m]}_X(\log D)(-D))\cong H^2_{\p}(\Omega^{[m]}_X(\log D))=0
\]
since $D$ is Cartier.
Thus $H^1_\p(\cF)=0$.
In particular, by the first sequence of \eqref{eq:first-quotient},
\[
(\Omega^{[m]}_D/\cF)_{\p}\cong H^0_\p(\Omega^{[m]}_D/\cF)=0.
\]
Hence, $\cF$ and $\Omega^{[m]}_D$ agree at $\p$ as well. This and the above paragraph contradict the fact that $\cF_{\p} \neq \Omega^{[m]}_{D,\p}$ or $\cG_{\p} \neq \Omega^{[m-1]}_{D,\p}$ concluding the proof of the lemma.
\end{proof}

\begin{corollary} \label{cor:advanceses}
In Setting \ref{setting:inv-k-rational}, suppose that the assumptions of Lemma \ref{lem:advance-sess-reflexified} hold.
Furthermore, suppose in addition that $X$ and $D$ have rational singularities. Then the following diagrams exist:
    \begin{equation} \label{eq:k-pure-diagram-1-rational}
\begin{tikzcd}[column sep = small]
 Rf_*\Omega^m_Y(\log G + E)(-G)  \arrow[r] & Rf_*\Omega^m_Y(\log E) \arrow[r] & Rf_*\Omega^m_G(\log F) \arrow[r,"+1"] & \hphantom{a} \\
    \Omega^{[m]}_X(\log D)(-D) \arrow[u] \arrow[r] & \Omega^{[m]}_X \arrow[u] \arrow[r] & \Omega^{[m]}_D \arrow[u] \arrow[r,"+1"] & \hphantom{a} 
\end{tikzcd}
\end{equation}
\begin{equation} \label{eq:k-pure-diagram-2-rational}
\begin{tikzcd}[column sep = small]
     Rf_*\Omega^m_Y(\log E) \arrow[r] & Rf_*\Omega^m_Y(\log G + E)  \arrow[r] & Rf_*\Omega^{m-1}_G(\log F)    \arrow[r, "+1"] & \hphantom{a} \\
    \Omega^{[m]}_X \arrow[u] \arrow[r] & \Omega^{[m]}_X(\log D) \arrow[u] \arrow[r] & \Omega^{[m-1]}_D \arrow[u] \arrow[r,"+1"] & \hphantom{a}.
\end{tikzcd}
\end{equation}
\end{corollary}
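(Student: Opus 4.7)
The plan is to produce each diagram by comparing a distinguished triangle obtained by applying $Rf_*$ to a short exact sequence on $Y$ furnished by Lemma~\ref{lem:basic-ses} with the short exact sequence of reflexive sheaves on $X$ provided by Lemma~\ref{lem:advance-sess-reflexified}; the vertical morphisms will be the canonical maps $\cH^0(-) \to (-)$.

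First, I apply $Rf_*$ to the two short exact sequences from Lemma~\ref{lem:basic-ses} (with the divisor called $D$ there taken to be our $G$ and the divisor called $E$ taken to be our $E$) to produce the top rows as distinguished triangles in $D^b_{\rm coh}(X)$. Since our hypotheses are exactly those of Lemma~\ref{lem:advance-sess-reflexified}, the bottom rows are supplied by that lemma. All top-row objects lie in $D^{\geq 0}_{\rm coh}(X)$, so the canonical morphism $\cH^0(K) \to K$ is available for each of them. Lemma~\ref{lem:extension-for-log-pairs} then yields the identifications $\cH^0(Rf_*\Omega^i_Y(\log E)) = \Omega^{[i]}_X$, $\cH^0(Rf_*\Omega^i_Y(\log G+E)) = \Omega^{[i]}_X(\log D)$, and $\cH^0(Rf_*\Omega^i_G(\log F)) = \Omega^{[i]}_D$; for the remaining object $\cH^0(Rf_*\Omega^m_Y(\log G+E)(-G))$, I take $f_*$ of the first exact sequence of Lemma~\ref{lem:basic-ses} and identify the resulting subsheaf of $\Omega^{[m]}_X$ with the kernel of $\Omega^{[m]}_X \to \Omega^{[m]}_D$ supplied by Lemma~\ref{lem:advance-sess-reflexified}, namely $\Omega^{[m]}_X(\log D)(-D)$.

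To upgrade the canonical maps $\cH^0(-) \to (-)$ into an honest morphism of distinguished triangles, I would lift the short exact sequence on $Y$ from Lemma~\ref{lem:basic-ses} to a short exact sequence of termwise injective resolutions via the horseshoe lemma, apply $f_*$ termwise to obtain a genuine short exact sequence of complexes on $X$ representing the top-row triangle, and invoke the fact that the inclusion $\cH^0(L^\kdot)[0] \hookrightarrow L^\kdot$ is natural in $L^\kdot$. This produces a commutative diagram of short exact sequences of complexes that descends to the required map of triangles in $D^b_{\rm coh}(X)$. The induced bottom row coincides with the short exact sequence of Lemma~\ref{lem:advance-sess-reflexified}: the targets $\Omega^{[m]}_D$ and $\Omega^{[m-1]}_D$ are torsion-free, so the arrows are determined by their values on the smooth locus, where the construction reduces verbatim to Lemma~\ref{lem:basic-ses}; this simultaneously verifies the "generically agree" assertion of the statement. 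The second diagram is obtained by the same argument applied to the second short exact sequence of Lemma~\ref{lem:basic-ses}, with the surjectivity needed in the bottom row this time supplied by the second short exact sequence of Lemma~\ref{lem:advance-sess-reflexified}.

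The main obstacle I anticipate is commutativity of the right-hand (boundary) square, because in a triangulated category the third vertical morphism in a map of distinguished triangles is not determined by the first two. Executing the entire construction at the chain level, via the short exact sequence of complexes described above, sidesteps this difficulty: all three squares commute already as morphisms of complexes by the naturality of $\cH^0$ and of the subcomplex-of-$0$-cocycles inclusion, and this commutativity persists upon passage to the derived category.
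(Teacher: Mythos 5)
Your proposal is correct and takes essentially the same route as the paper's proof: apply $Rf_*$ to the two short exact sequences of Lemma \ref{lem:basic-ses} to get the top rows, identify their $\cH^0$'s via Lemma \ref{lem:extension-for-log-pairs} (with torsion-freeness giving the agreement with the maps of Lemma \ref{lem:advance-sess-reflexified}, whose surjectivity supplies the bottom rows), and take the vertical arrows to be the natural maps $\cH^0(-)\to(-)$. Your extra chain-level (horseshoe lemma) argument for compatibility with the connecting morphisms is a correct refinement of a point the paper leaves implicit.
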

\begin{proof}
We start with the first diagram. The top row exists by Lemma \ref{lem:basic-ses}. The bottom row is constructed by applying $\cH^0$ to the top row. Specifically, 
\[
f_*\Omega^m_Y(\log E) = \Omega^{[m]}_X \quad \text{ and } \quad  f_*\Omega^m_G(\log F) = \Omega^{[m]}_D
\]
by Lemma \ref{lem:extension-for-log-pairs}, and so
\[
f_*\Omega^m_Y(\log G + E)(-G) = {\rm ker}(\Omega^{[m]}_X \to  \Omega^{[m]}_D) = \Omega^{[m]}_X(\log D)(-D). 
\]
Finally, the map $\Omega^{[m]}_X \to \Omega^{[m]}_D$ is surjective by Lemma \ref{lem:advance-sess-reflexified}. This concludes the construction of the first diagram.

We move to the second diagram. The top row thereof exists by Lemma \ref{lem:basic-ses}. The bottom row is constructed by applying $\cH^0$ to the top row. Specifically
\[
f_*\Omega^m_Y(\log E) = \Omega^{[m]}_X \ \text{ and } \ f_*\Omega^m_Y(\log G + E) = \Omega^{[m]}_X(\log D) \ \text{ and } \ f_*\Omega^{m-1}_G(\log F) = \Omega^{[m-1]}_D
\]
by Lemma \ref{lem:extension-for-log-pairs}. Finally, the map $\Omega^{[m]}_X(\log D) \to \Omega^{[m-1]}_D$ is surjective by Lemma \ref{lem:advance-sess-reflexified}. This concludes the construction of the second diagram.
\end{proof}

\begin{theorem}\label{thm:inversion-of-adjunction}
With notation as in Setting \ref{setting:inv-k-rational},  fix an integer
\[
0 < m < {\rm codim}_D({\rm Sing}(D)) - 2.
\]
Assume that $D$ is pre-$m$-rational. Then $X$ is pre-$m$-rational along $D$.  
\end{theorem}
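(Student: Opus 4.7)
The proof proceeds by induction on $m$, the base case $m = 0$ being Elkik's classical inversion of adjunction for rational singularities, cited in the introduction. For $m \geq 1$, assume the theorem at level $m - 1$, so that $X$ is pre-$(m-1)$-rational along $D$. By Corollary \ref{cor:lc-def-ofkRational}(6) applied with the SNC divisor $G + E$ on $Y$ and the inductive hypothesis at indices $i \leq m - 1$, pre-$m$-rationality along $D$ reduces to checking, at each closed $x \in D$, the injectivity of
\[
H^j_x(\Omega^{[m]}_X) \;\longrightarrow\; H^j_x\bigl(Rf_*\Omega^m_Y(\log E)\bigr) \qquad \text{for every $j$ with } m + j \leq d.
\]

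The first part of the plan is the setup. Shrink $X$ to a neighborhood of $D$ on which $X$ is globally pre-$(m-1)$-rational (equivalently pre-$(m-1)$-Du Bois by Corollary \ref{cor:lc-def-ofkRational}(3)). By Lemma \ref{lem:small-introduction}, the set $\Supp\,\cExt^{-2}(\Omega^{[m]}_X,\omega^\kdot_X)$ is finite; shrink further so that it is contained in $D$. The hypothesis $m < \codim_D({\rm Sing}(D)) - 2$, Proposition \ref{prop:weakly-k-rational-depth} applied to the pre-$m$-rational variety $D$, and Lemma \ref{lem:lc-max-to-prime} together yield $H^2_\p(\Omega^{[m]}_D) = H^2_\p(\Omega^{[m-1]}_D) = 0$ for every $\p \in {\rm Sing}(D)$. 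Lemma \ref{lem:advance-sess-reflexified} and Corollary \ref{cor:advanceses} then furnish the two commutative diagrams \eqref{eq:k-pure-diagram-1-rational} and \eqref{eq:k-pure-diagram-2-rational}, whose rows compare reflexive short exact sequences with pushforward distinguished triangles.

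The central step is a diagram chase. Pre-$m$- and pre-$(m-1)$-rationality of $D$ promote the maps $\Omega^{[m]}_D \to Rf_*\Omega^m_G(\log F)$ and $\Omega^{[m-1]}_D \to Rf_*\Omega^{m-1}_G(\log F)$ to quasi-isomorphisms in $D^b(X)$ near $D$: indeed, $g = f|_G \colon G \to D$ is a log resolution of $D$, and all $R^{>0}g_*$ of these sheaves vanish under the given hypotheses. Applying $\Cone(\cdot)$ columnwise in both diagrams therefore yields pairwise isomorphisms
\[
U \;\cong\; V \;\cong\; W \qquad \text{in $D^b(X)$ near $D$,}
\]
where
\begin{align*}
U &\coloneq \Cone\bigl(\Omega^{[m]}_X \to Rf_*\Omega^m_Y(\log E)\bigr), \\
V &\coloneq \Cone\bigl(\Omega^{[m]}_X(\log D) \to Rf_*\Omega^m_Y(\log G + E)\bigr), \\
W &\coloneq \Cone\bigl(\Omega^{[m]}_X(\log D)(-D) \to Rf_*\Omega^m_Y(\log G + E)(-G)\bigr).
\end{align*}
Each of $U, V, W$ lies in $D^{\geq 1}$ by Lemma \ref{lem:extension-for-log-pairs}, and the required injectivity on local cohomology is equivalent to the statement that $U_x = 0$ for every closed $x \in D$.

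The main obstacle is precisely this final vanishing. My plan is to adapt the minimal-bad-prime strategy used to prove Lemma \ref{lem:advance-sess-reflexified}: supposing the injectivity fails at some closed $x \in D$ in some degree $j \leq d - m$, pass via local duality to a non-vanishing for certain modules $\cExt^{\bullet}(\Omega^{[m]}_X,\omega^\kdot_X)_\p$, and pick $\p$ containing the ideal of $D$ minimal for this failure. A careful chase through the two diagrams, invoking the isomorphism $U \cong W$ and the local equation $t \in \cO_X$ of $D$ acting via the filtration $\Omega^{[m]}_X(-D) \subset \Omega^{[m]}_X(\log D)(-D) \subset \Omega^{[m]}_X \subset \Omega^{[m]}_X(\log D)$, should exhibit a surjectivity of multiplication-by-$t$ on the relevant $\cExt_\p$-module. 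The hypothesis $\Supp\,\cExt^{-2}(\Omega^{[m]}_X,\omega^\kdot_X) \subseteq D$ will then force this module to be zero, exactly as in the closing argument of Lemma \ref{lem:advance-sess-reflexified}, contradicting the minimality of $\p$. Combined with the trivial base cases $j = 0$ (torsion-freeness of $\Omega^{[m]}_X$) and $j = 1$ (reflexivity with $\codim_X x \geq 2$), this closes the inductive step, and Corollary \ref{cor:lc-def-ofkRational} delivers pre-$m$-rationality of $X$ along $D$.
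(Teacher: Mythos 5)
Your setup (shrinking to a neighbourhood of $D$, the inductive use of Lemma \ref{lem:small-introduction}, the depth statements for $\Omega^{[m]}_D$, and the diagrams of Corollary \ref{cor:advanceses}) matches the paper's, but the central step has a genuine gap. You claim that pre-$m$- and pre-$(m-1)$-rationality of $D$ make the maps $\Omega^{[m]}_D \to Rf_*\Omega^m_G(\log F)$ and $\Omega^{[m-1]}_D \to Rf_*\Omega^{m-1}_G(\log F)$ quasi-isomorphisms because ``all $R^{>0}g_*$ of these sheaves vanish'' for $g = f|_G$. This is false in general: $g$ is merely a log resolution of $D$, not a strong one, and $F = G\cap E$ is only required to contain $\Exc(g)$ (it may even have components that are not $g$-exceptional). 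The identification $\DO^i_D \cong Rg_*\Omega^i_G(\log \Exc(g))$ of Proposition \ref{prop:dualDDB-explicit} needs $i < \codim_D g(\Exc(g))$, which is not implied by $m < \codim_D({\rm Sing}(D)) - 2$. Concretely, take $D$ smooth (hence pre-$m$-rational for every $m$) and let $g$ blow up a smooth centre $C\subset D$ of codimension $2$ with exceptional divisor $F$: the computation of Remark \ref{remark:jpush} gives $R^{1}g_*\Omega^m_G(\log F)\cong \Omega^{m-2}_C\neq 0$ for $m\geq 2$, and such $g$ do arise as $f|_G$ in Setting \ref{setting:inv-k-rational}. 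What pre-$m$-rationality of $D$ gives for an arbitrary pair $(G,F)$ is only the injectivity on local cohomology of Corollary \ref{cor:lc-def-ofkRational}(6) --- not a cone vanishing. For the same reason your reformulation of the goal as ``$U_x=0$'' overshoots: since $f$ need not be strong, $Rf_*\Omega^m_Y(\log E)$ is not $\DO^m_X$, and even a pre-$m$-rational (e.g.\ smooth) $X$ can have $U\neq 0$; the correct target is only the injectivity of $H^j_\m(\Omega^{[i]}_X)\to H^j_\m(Rf_*\Omega^i_Y(\log E))$ in the range $i+j\leq d$, $i\leq m$.

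Beyond this, the part that would actually have to carry the proof is left as a plan (``should exhibit a surjectivity of multiplication by $t$''), and it does not contain the mechanism that closes the argument. The paper's proof runs as follows: fix $0\neq\alpha\in H^j_\m(\Omega^{[i]}_X)$ with $t\alpha=0$ (possible since local cohomology is Artinian); diagram \eqref{eq:k-pure-diagram-2-rational} twisted by $\cO_X(-D)$ together with the depth vanishing $H^{j-1}_\m(\Omega^{[i-1]}_D)=0$ (Proposition \ref{prop:weakly-k-rational-depth}) shows that the image $\beta$ of $\alpha$ in $H^j_\m(\Omega^{[i]}_X(\log D)(-D))$ is nonzero; since $\alpha$ dies in $H^j_\m(\Omega^{[i]}_X)$, diagram \eqref{eq:k-pure-diagram-1-rational} lifts $\beta$ to a nonzero class $\gamma\in H^{j-1}_\m(\Omega^{[i]}_D)$; finally $\gamma$ survives pullback because $H^{j-1}_\m(\Omega^{[i]}_D)\to H^{j-1}_\m(Rf_*\Omega^i_G(\log F))$ is injective (pre-$m$-rationality of $D$ via Corollary \ref{cor:lc-def-ofkRational}(6)) and $H^{j-1}_\m(Rf_*\Omega^i_Y(\log E))=0$ (Proposition \ref{prop:vanishing}), whence $f^*\alpha\neq 0$. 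Your proposal never invokes Proposition \ref{prop:vanishing} nor the local-cohomology characterization of pre-$m$-rationality of $D$ at this decisive point, and the minimal-bad-prime/multiplication-by-$t$ scheme, resting on the unproved cone isomorphisms $U\cong V\cong W$, is not a substitute; the proof is therefore incomplete at its core.
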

\begin{proof}
In what follows, we may assume that $X=\Spec R$ is affine and that $D$ is given by a single function $t \in R$.

In this paragraph, we verify that the assumptions of Corollary \ref{cor:advanceses} are all satisfied. First, since $D$ is pre-$m$-rational, it has rational singularities (Lemma \ref{lem:equiv-def-pre-k-rat}). In particular, by inversion of adjunction for rational singularities (\cite{elkik78}) we have that $X$ has rational singularities along $D$. By replacing $X$ by an open subset containing $D$, we may assume that $X$ has rational singularities. Moreover, by ascending induction on $m$, we may assume that $X$ is pre-$(m-1)$-rational. Thus, by Lemma \ref{lem:small-introduction}:
\[
\dim \Supp \cExt^{-2}(\Omega^{[m]}_X, \omega^\kdot_X) = 0,
\]
and so we may assume that $\Supp \cExt^{-2}(\Omega^{[m]}_X, \omega^\kdot_X) \subseteq D$. Moreover, by Proposition \ref{prop:weakly-k-rational-depth} and Lemma \ref{lem:lc-max-to-prime}, we have that
\[
H^j_\p(\Omega^{[m]}_D) = 0
\]
for all prime ideals $\p \in D$ and all $i,j \geq 0$ such that $i+j < \dim \cO_{D,\p}$ and $i \leq m$. Since $m+2 < {\rm codim}_D({\rm Sing}(D))$ we get that
\[
H^2_\p(\Omega^{[i]}_D) = 0
\]
for all $0 \leq i \leq m$ and all prime ideals $\p \in {\rm Sing}(D)$. This concludes the verification that the assumptions of Corollary \ref{cor:advanceses} are all satisfied.\\

Pick a maximal ideal $\m \in D$. Pick integers $j \geq 0$ and $0 \leq i \leq m$ such that $i+j \leq d$. Pick a non-zero cohomology class $\alpha \in H^j_\m(\Omega^{[i]}_X)$. Since local cohomology is Artinian, up to multiplying $\alpha$ by some big power of $t$ we may assume that $t\alpha = 0$, but $\alpha$ is still non-zero. By Corollary \ref{cor:lc-def-ofkRational}(6), to conclude the proof we need to show that $f^*(\alpha) \neq 0$, where:
\[
f^* \colon H^j_\m(\Omega^{[i]}_X) \to H^j_\m(Rf_*\Omega^i_Y(\log E)).
\]

In what follows, given a complex $K$ we denote $K(-D) \coloneqq K \otimes_{\cO_X} \cO_X(-D)$. Since $\cO_X(-D)\cong \cO_X$, we can reformulate the above set-up as saying that we have a non-zero class $\alpha \in H^j_\m(\Omega^{[i]}_X(-D))$ such that  
\begin{equation} \label{eq:iszero}
    \text{ the image of $\alpha$ under the inclusion } H^j_\m(\Omega^{[i]}_X(-D)) \to H^j_\m(\Omega^{[i]}_X) \text{ is zero,}
\end{equation} and our goal is to show that:
\vspace{0.4em}
\begin{equation} \label{eq:inv-k-rational-goal1}
f^*(\alpha) \neq 0 \quad \text{ for } \quad f^* \colon H^j_\m(\Omega^{[i]}_X(-D)) \to H^j_\m(Rf_*\Omega^i_Y(\log E)(-f^*D)).
\end{equation}
\vspace{-0.4em}

\noindent Diagram (\ref{eq:k-pure-diagram-2-rational}) tensored by $\cO_X(-D)$ induces the following diagram:
\[
\begin{tikzcd}[column sep = small]
      & H^{j}_\m(Rf_*\Omega^i_Y(\log E)(-f^*D))  \arrow[r] & H^{j}_\m(Rf_*\Omega^i_Y(\log G+E)(-f^*D)) \\
    0 =  H^{j-1}_\m(\Omega^{[i-1]}_D(-D))\!\!\!\!\!\!\!\!\!\!\! 
 \arrow[r, shorten <= 1.7em] & H^{j}_\m(\Omega^{[i]}_X(-D)) \arrow[u, "f^*"] \arrow[r, "\psi"] & H^{j}_\m(\Omega^{[i]}_X(\log D)(-D)), \arrow[u, "f^*"] 
\end{tikzcd}
\]
where $H^{j-1}_\m(\Omega^{[i-1]}_D)=0$ by Proposition \ref{prop:weakly-k-rational-depth} given that $(i-1) + (j-1) < d-1$ and $D$ is pre-$(m-1)$-rational. Because of this vanishing, $\beta \coloneqq \psi(\alpha)$ is non-zero. Thus, in view of the above diagram, to fulfil our goal \eqref{eq:inv-k-rational-goal1}, it is enough to show that
\vspace{0.4em}
\begin{equation} \label{eq:inv-k-rational-goal2}
f^*(\beta) \neq 0 \ \text{ for } \ f^* \colon H^j_\m(\Omega^{[i]}_X(\log D)(-D)) \to H^j_\m(R\pi_*\Omega^i_Y(\log G+E)(-f^*D)).
\end{equation}
\vspace{-0.4em}

\noindent Next, diagram (\ref{eq:k-pure-diagram-1-rational}) induces the following diagram on local cohomology:
\[
\begin{tikzcd}
    H^{j-1}_\m(Rf_*\Omega^i_G(\log F))  \arrow[r, hook, "(\dagger\dagger)"] & H^{j}_\m(Rf_*\Omega^i_Y(\log G+E)(-G))  \arrow[r] & H^{j}_\m(Rf_*\Omega^i_Y(\log E)) \\
      H^{j-1}_\m(\Omega^{[i]}_D) \arrow[u, hook, "(\dagger)"] \arrow[r, "\theta"] & H^{j}_\m(\Omega^{[i]}_X(\log D)(-D)) \arrow[u, "f^*"] \arrow[r, "\phi"] & H^{j}_\m(\Omega^{[i]}_X), \arrow[u] 
\end{tikzcd}
\]
where the arrow $(\dagger\dagger)$ is injective, because of the vanishing $H^{j-1}_\m(Rf_*\Omega^i_Y(\log E))=0$ coming from Proposition \ref{prop:vanishing}. Moreover, $(\dagger)$ is injective as $D$ is pre-$m$-rational (see Corollary \ref{cor:lc-def-ofkRational}(6)). \\

We have a factorisation:
\[
f^* \colon \Omega^{[i]}_X(\log D)(-D) \to Rf_*\Omega^i_Y(\log G+E)(-f^*D) \to  Rf_*\Omega^i_Y(\log G+E)(-G).
\]
Therefore, to fulfil our goal \eqref{eq:inv-k-rational-goal2}, it is enough to show that
\vspace{0.4em}
\begin{equation} \label{eq:inv-k-rational-goal3}
f^*(\beta) \neq 0 \ \text{ for } \ f^* \colon H^j_\m(\Omega^{[i]}_X(\log D)(-D)) \to H^j_\m(Rf_*\Omega^i_Y(\log G+E)(-G)).
\end{equation}
\vspace{-0.4em}

\noindent Now $\phi(\beta)=\phi(\psi(\alpha)) = 0 \in H^j_\m(\Omega^{[i]}_X)$ by \eqref{eq:iszero} given that the composition
\[
H^j_\m(\Omega^{[i]}_X(-D)) \xrightarrow{\psi} H^j_\m(\Omega^{[i]}_X(\log D)(-D)) \xrightarrow{\phi} H^j_\m(\Omega^{[i]}_X),
\]
is equal to the natural map  $H^j_\m(\Omega^{[i]}_X(-D)) \to H^j_\m(\Omega^{[i]}_X)$. Hence there exists a class 
\[
0\neq \gamma \in H^{j-1}_\m(\Omega^{[i]}_D)
\]
such that $\beta = \theta(\gamma)$. Therefore, our goal \eqref{eq:inv-k-rational-goal3} is fulfilled, given that both $(\dagger)$ and $(\dagger\dagger)$ are injective. \qedhere
\end{proof}

We conclude the article by providing a proof of Corollary \ref{corintro:fibres}.
\begin{proof}[Proof of Corollary \ref{corintro:fibres}]
By Theorem \ref{intro:main-thm}, there exists an open subset $U\subseteq X$ containing $X_s$ such that $U$ is $m$-rational. Since $f$ is proper, we may assume that $U = f^{-1}(V)$ for some open subset $V \subseteq C$. Then by Corollary \ref{cor:pre-k-rational-restriction}(1), we have that every fibre $X_s$ of $f \colon X \to C$ for a general element $s \in V$ is $m$-rational as required. 
\end{proof} 

 \bibliographystyle{amsalpha}
 \bibliography{bibliography}

\end{document}